\newtheorem{thm}{Theorem}[section]
\newtheorem{prop}[thm]{Proposition}
\newtheorem{lem}[thm]{Lemma}
\newtheorem{cor}[thm]{Corollary}
\newtheorem*{Mthm}{Main Theorem}
\newtheorem*{propD}{Property D}
\theoremstyle{definition}
\newtheorem{dfn}[thm]{Definition}
\def\C{\mathbb{C}}
\def\R{\mathbb{R}}
\def\Q{\mathbb{Q}}
\def\0{\emptyset}
\def\Ac{\mathcal{A}} \def\Bc{\mathcal{B}} \def\Cc{\mathcal{C}} \def\Dc{\mathcal{D}}
 \def\Mc{\mathcal{M}} 
\def\Oc{\mathcal{O}} \def\Pc{\mathcal{P}} 
\def\Sc{\mathcal{S}} 
 \def\Zc{\mathcal{Z}}  
\def\Z{\mathbb{Z}}
\renewcommand\emptyset{\varnothing}
\newcommand{\sm}{\setminus}
\def\eps{\varepsilon}
\def\ol{\overline}
\def\d{\partial}
  \def\ta{\theta}  \def\Ga{\Gamma}
\def\al{\alpha}    \def\la{\lambda}
\def\le{\leqslant}
\def\ge{\geqslant}
\def\uc{\mathbb{S}^1}
\def\disk{\mathbb{D}}
\def\<{\langle}
\def\>{\rangle}
\def\d{\partial}
\def\cdisk{\ol\disk}
\def\thu{\mathrm{TH}}
\def\Re{\mathrm{Re}}
\begin{document}
\date{Feb 15, 2021; revised on Oct 28 and Dec 3, 2021}

\title[Modeling core parts of Zakeri slices I]
{Modeling core parts of Zakeri slices I}

\dedicatory{To the memory of A.M. Stepin}

\author[A.~Blokh]{Alexander~Blokh}

\author[L.~Oversteegen]{Lex Oversteegen}

\author[A.~Shepelevtseva]{Anastasia Shepelevtseva}

\author[V.~Timorin]{Vladlen~Timorin}

\address[A.~Blokh, L.~Oversteegen]
{Department of Mathematics\\ University of Alabama at Birmingham\\
Birmingham, AL 35294-1170}

\address[A.~Shepelevtseva, V.~Timorin]
{Faculty of Mathematics\\
HSE University, Russian Federation\\
6 Usacheva St., 119048 Moscow
}

\address[A.~Shepelevtseva]
{Scuola Normale Superiore, 7 Piazza dei Cavalieri, 56126 Pisa, Italy}

\address[Vladlen~Timorin]
{Independent University of Moscow\\
Bolshoy Vlasyevskiy Pereulok 11, 119002 Moscow, Russia}

\email[Alexander~Blokh]{ablokh@math.uab.edu}
\email[Lex~Oversteegen]{overstee@uab.edu}
\email[Anastasia~Shepelevtseva]{asyashep@gmail.com}
\email[Vladlen~Timorin]{vtimorin@hse.ru}

\subjclass[2010]{Primary 37F45, 37F20; Secondary 37F10, 37F50}

\keywords{Complex dynamics; Julia set; cubic polynomial; Siegel disk; connectedness locus, external rays}

\begin{abstract}
The paper deals with cubic 1-variable polynomials whose Julia sets are connected.
Fixing a bounded type rotation number, we obtain a slice of such polynomials with the origin being a fixed Siegel point
 of the specified rotation number.
Such slices as parameter spaces were studied by S. Zakeri, so we call them \emph{Zakeri slices}.
We give a model of the central part of a slice
(the subset of the slice that can be approximated by hyperbolic polynomials with Jordan curve Julia sets),
 and a continuous projection from the central part to the model.
The projection is defined dynamically and agrees with the dynamical-analytic parameterization
 of the Principal Hyperbolic Domain by Petersen and Tan Lei.
\end{abstract}

\maketitle

\section{Introduction}
In this introduction we assume a certain level of familiarity with complex dynamics; detailed
definitions will be given later on.

For a polynomial $P$ denote by $[P]$ its affine conjugacy class. By the
\emph{degree $d$ polynomial parameter space} one understands the space
of such classes of polynomials of degree $d$. Similarity between
quadratic dynamical planes and slices of parameter spaces of higher degree
polynomials is a recurring topic of research. A now standard mechanism
(found in \cite{BuHe}) uses holomorphic renormalization. If, say, a
cubic polynomial $P$ is \emph{immediately renormalizable} (i.e., has a
connected quadratic-like filled Julia set $K^*(P)$), then one critical
point of $P$ belongs to $K^*(P)$. The other critical point of $P$ may
eventually map to $K^*(P)$ in which case $P$ belongs to a quasiconformal
copy of $K^*(P)$ contained in the parameter space of cubic polynomials.
A more general renormalization scheme established in \cite{IK12} allows
to find copies of
 $\Mc_2\times\Mc_2$ (where $\Mc_2$ is the quadratic Mandelbrot set) or
 $\mathcal{MK}$ (the set of pairs $(c,z)$, where $c\in\Mc_2$,
 and $z$ belongs to the filled Julia set $K(P_c)$ of $P_c(w)=w^2+c$)
in the cubic connectedness locus.
In the non-renormalizable case, things are much subtler.

Suppose that a cubic polynomial $P$ has a non-repelling fixed point $a$.
It can always be arranged by a suitable affine conjugacy that $a=0$;
one can consider this point as marked and, hence, instead of affine conjugacies work with
linear conjugacies $A(z)=\al z$, where $\al\in\C\sm\{0\}$, that leave $0$ fixed.
Much is known if $P$ is renormalizable;
 this case, under the additional assumption
 that $P$ tunes a hyperbolic polynomial, is considered in \cite{IK12,sw20}.
The remaining, non-renormalizable case, needs closer attention.
Consider the set of all affine conjugacy classes $[P]$ of cubic polynomials $P$ with $P(0)=0$ and $|P'(0)|\le 1$.
A central part of this parameter space, analogous to the interior of the main cardioid, is the \emph{principal hyperbolic component}
 consisting of classes $[P]$ for all hyperbolic $P$ with $|P'(0)|<1$ and Jordan curve Julia set.
An analytic parametrization of the principal hyperbolic component with
dynamical meaning is given in \cite{PT09} where the authors were able
to describe pieces of the boundary of the principal hyperbolic component
 contained in the locus of classes $[P]$ with $|P'(0)|<1$.
 This paper aims at a similar description in the Siegel case under the assumption that
the associated rotation number has bounded type.

A powerful method of studying polynomials with non-repelling periodic
points is based upon linearizations.
Consider a polynomial $f$ with attracting or neutral fixed point $a$
(we discuss polynomials, but a lot of the results are in fact more general).
A \emph{linearization} is a holomorphic map $\psi$ of an open disk $\disk(r)$ of radius $r>0$ around $0$
such that $\psi(0)=a$, and $\psi(\la z)=f\circ\psi(z)$ for all $z\in\disk(r)$ where $\la=f'(a)$.
Assume that $r>0$ is the radius of convergence of the power series of $\psi$ at $0$.
It is known that $\psi:\disk(r)\to\C$ is an embedding, cf. \cite{che20}.
Then $\psi(\disk(r))$ is called the \emph{linearization domain} $\Delta(f,a)$ of $f$ around $a$.
If $|\la|<1$, then $\Delta(f, a)$ is compactly contained in the attracting basin of $a$,
 and $\partial \Delta(f, a)$ contains a critical point.
In the case $a=0$, the domain $\Delta(f,a)$ is denoted by $\Delta(f)$.

Fix $\la$ with $|\la|\le 1$.
Let $\C_\la$ be the space of complex linear conjugacy classes of complex cubic polynomials
with fixed point $0$ of multiplier $\la$
(alternatively, $\C_\la$ consists of affine conjugacy classes of cubic polynomials with
\emph{marked} fixed point of multiplier $\la$).
For a cubic polynomial $P(z)=\la z+\dots$, let $[P]_0$ be its class \emph{in $\C_\la$}.
Write $\Cc_\la\subset\C_\la$ for the \emph{connectedness locus} in $\C_\la$.
That is, $[P]_0\in\Cc_\la$ if the Julia set $J(P)$ of $P$ is connected.
A central part of $\Cc_\la$ is the set $\Pc_\la$ of all $[P]_0\in\Cc_\la$ that lie in the closure of the principal hyperbolic component.
We are interested in understanding the topology and combinatorics of $\Pc_\la$
 through a comparison with a suitable dynamical object.

As the basis for comparison, consider the space of quadratic polynomials $Q(z)=Q_\la(z)=\la z(1-z/2)$.
Then $\la$ is the multiplier of the fixed point $0$ of $Q$.
Suppose that either $|\la|<1$ or $\la=e^{2\pi i\ta}$, where $\ta\in\R/\Z$ is of bounded type.
Let $\psi=\psi_{Q}:\disk\to\Delta(Q)$ be the corresponding linearization (here $\disk=\disk(1)$).
The set $\ol \Delta(Q)$ is a Jordan disk --- this is a classical result of Douady--Ghys--Herman--Shishikura,
 see \cite{Dou87,Her87,Sw98}.
Therefore, the Riemann map extends to a homeomorphism $\ol\psi:\ol\disk\to\ol \Delta(Q)$.
The finite critical point of $Q$ is 1, thus the linearizatiton domain
$\Delta(Q)$ around $0$ contains $1$ in its boundary.
We normalize $\psi$ so that $\ol\psi(1)=1$.
If $|\la|=1$, then the map $\ol\psi$ conjugates the rigid rotation by angle $\ta$ with the restriction of $Q$ to $\ol\Delta(Q)$.
Consider the quotient $\tilde K(Q)$ of the set $K(Q)\sm\Delta(Q)$ by the equivalence relation $\sim$ defined as follows.
Two different points $z$, $w$ are equivalent if both belong to $\d\Delta(Q)$, and $\Re(\ol\psi^{-1}(z))=\Re(\ol\psi^{-1}(w))$.

There is a partially defined correspondence --- stated as Property D below ---
 between the dynamical plane of $P$ and that of $Q$.
Recall that a continuous map $\eta:X\to Y$ between two compacta is said to be \emph{monotone} if,
 for every connected subset $B\subset Y$, the set $\eta^{-1}(B)\subset X$ is connected.
In order to verify that $\eta$ is monotone, it suffices to check that all point preimages are connected.

\begin{propD}
For any cubic polynomial $P$ with $[P]_0\in\Pc_\la$, there exist a full $P$-invariant continuum $X(P)$
 containing both critical points of $P$ and a continuous map $\eta_P:X(P)\to K(Q)$ that
 semi-conjugates $f|_{X(P)}$ with $Q|_{\eta_P(X(P))}$.
If both critical points of $P$ are in the Julia set, then the map $\eta_P$ is monotone.
\end{propD}

The letter $D$ in \emph{Property D} stands for ``Dynamics'' (or ``Douady'').
This property will be used to, quoting Douady,
 \emph{``seed in the dynamical plane and reap the harvest in the parameter plane''}.

\begin{Mthm}
Suppose that $\theta\in\R/\Z$ is of bounded type, and $\lambda=e^{2\pi i\theta}$.
Let $Q=Q_\lambda$ be a quadratic polynomial with a fixed point of multiplier $\lambda$.
Then there is a continuous map $\Phi_\la:\Pc_\la\to \tilde K(Q)$ taking $[P]_0$ to the
 $\eta_P$-image of some critical point of $P$.
\end{Mthm}

The map $\Phi_\la$ is illustrated in Figure \ref{fig:pla}.

\begin{figure}
  \centering
  \includegraphics[height=4cm]{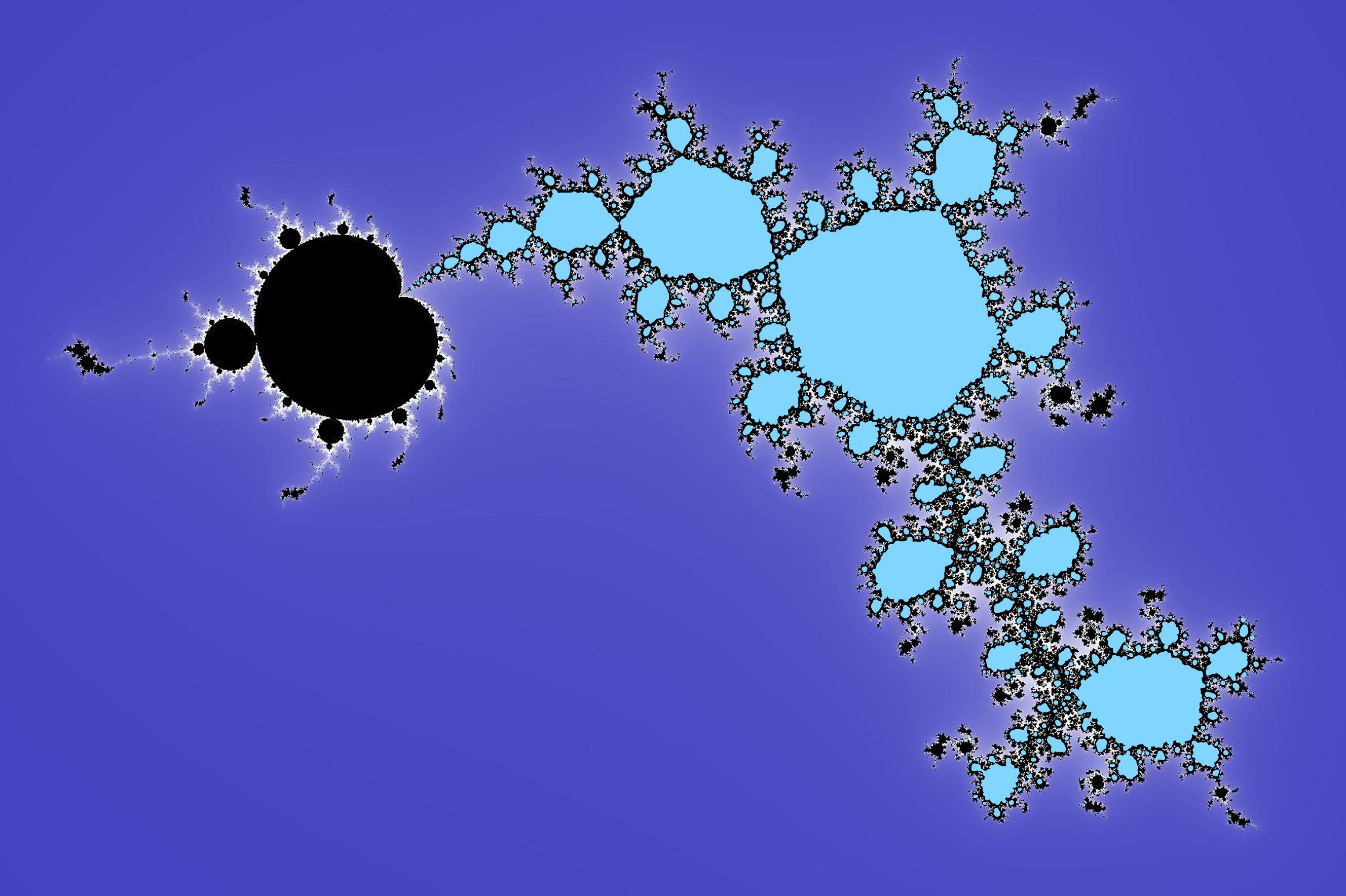}
  \includegraphics[height=4cm]{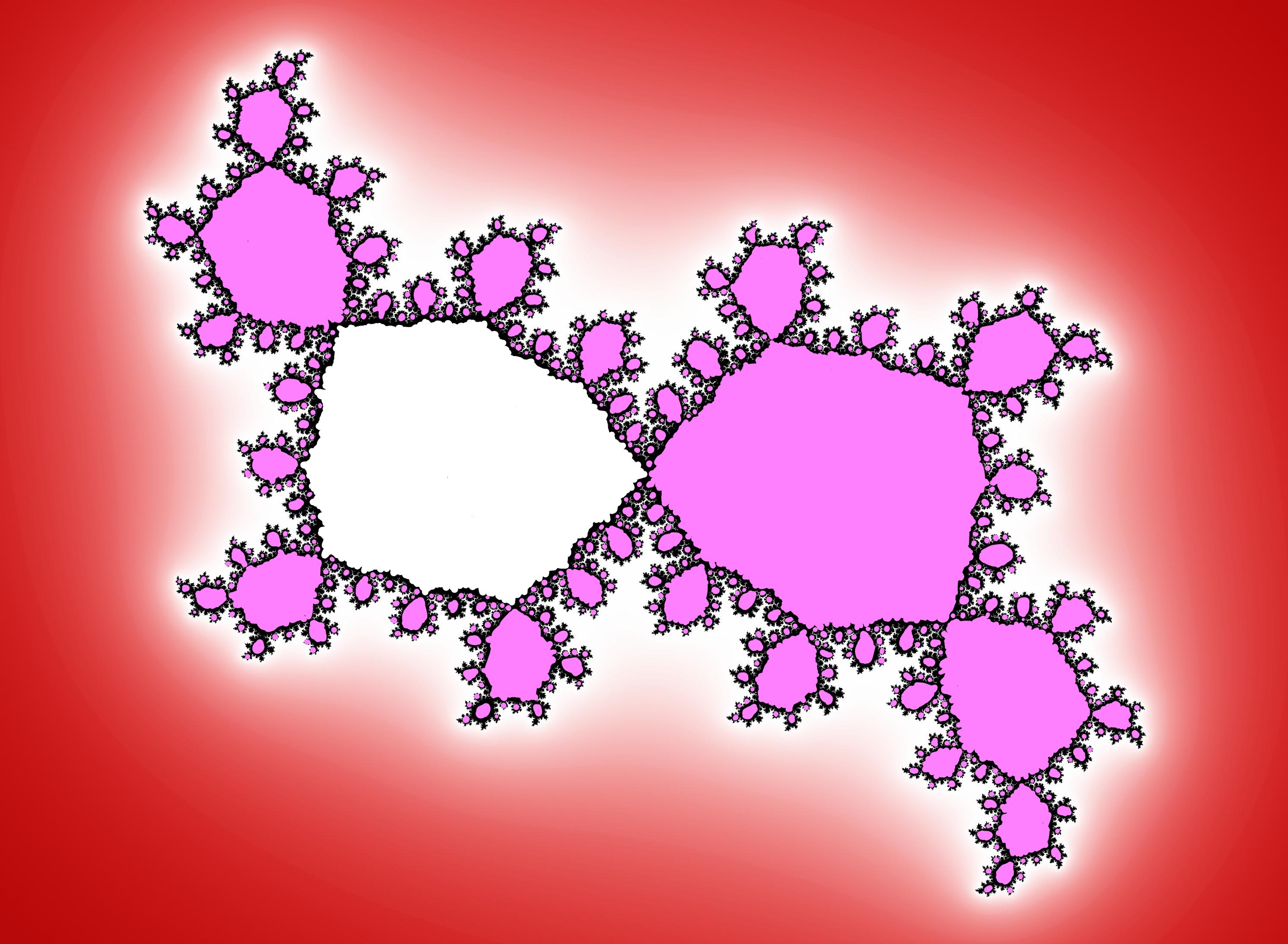}
  \caption{\small Left: the parameter plane $\C_\la$ with $\la=\exp(\pi i\sqrt{2})$.
  We used the parameterization, in which every linear conjugacy class from $\C_\la$
  is represented by a polynomial of the form $f(z)=\la z+\sqrt{a}z^2+z^3$,
  where $a$ is the parameter (that is, the figure shows the $a$-plane).
  The conjugacy class of $f$ is independent on the choice between the two values
  of the square root.
  Regions with light uniform shading are interior components of $\Pc_\la$.
  There are also various ``decorations'' of $\Pc_\la$ (that is, components of $\Cc_\la\sm\Pc_\la$)
  shown in black; these decorations contain copies of the Mandelbrot set.
  Right: the dynamical plane of $Q=Q_\la$.
  The bounded white region near the center is the Siegel disk $\Delta(Q)$.
  A conjectural model of $\Pc_\la$ is obtained from $K(Q)$ by removing this white region and
  gluing its boundary into a simple curve.
  Our main theorem provides a continuous map from $\Pc_\la$ to this conjectural model.
  }\label{fig:pla}
\end{figure}

In this paper we do not address the issue of $\Phi_\la$ being surjective or monotone ---
a discussion of these properties is postponed to a later publication.
It can be observed that the Main Theorem is a direct (partial) extension of \cite{PT09}.
According to \cite{PT09}, C. Petersen, P. Roesch and Tan Lei planned a continuation
 that should have contained an analog of the above Main Theorem for parabolic slices.
Apparently, this continuation never appeared in print.

Observe also that while this paper concentrated on the set $\Pc_\la$,
 the structure of the entire parameter $\la$-slice $\C_\la$ with the corresponding $\la$-slice of the connectedness locus
 $\Cc_\la\subset\C_\la$ was studied in \cite{slices}.

\section{Background and a specification of the Main Theorem}
\label{s:bcg}
Take $\la=e^{2\pi i\ta}$, where $\ta\in\R\sm\Q$.
Let $p_n/q_n$ be the sequence of rational approximations of $\ta$ based on the continued fraction expansion.
By the Brjuno--Yoccoz theorem \cite{Brj71,yoc},
 a holomorphic germ $f$ with $f(0)=0$ and $f'(0)=\la$ is linearizable at $0$ if and only if
$$
\sum_{n=1}^\infty\frac{\log q_{n+1}}{q_n}<\infty.
$$
The latter condition is called the \emph{Brjuno condition};
if $\ta$ satisfies it,  $\ta$ is said to be a \emph{Brjuno number}.
So, if $\ta$ is a Brjuno number, then a polynomial $f$ with $f(0)=0$ and $f'(0)=\la$
 has a Siegel disk $\Delta(f)$ around $0$.
Say that $\ta$ is \emph{bounded type} if the continued fraction coefficients of $\ta$ are bounded.
Any bounded type irrational number is Brjuno; the converse is not true. 
From now on and throughout the paper, we set $\la=e^{2\pi i\ta}$ and assume that $\ta$ is bounded type.

Let us discuss the parameter slices that are of interest in this paper, and how to parameterize them.
Start with the quadratic case.
Recall that a quadratic polynomial with a fixed point of multiplier $\la$ is unique up to an affine conjugacy.
We use the normalization $Q_\la(z)=\la z(1-z/2)$ with the property that $0$ is the fixed point of multiplier $\la$
and the finite critical point of $Q_\la$ is $1$.
Some known results about $J(Q_\la)$ are summarized in the following theorem.

\begin{thm}
  \label{t:Pe}
  The Julia set $J(Q_\la)$ is locally connected and has zero Lebesgue measure.
Moreover, the Siegel disk $\Delta(Q_\la)$ is a quasidisk.
\end{thm}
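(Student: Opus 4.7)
The statement collects three classical results about quadratic Siegel polynomials with bounded-type rotation number, so my plan is to sketch the strategy for each of the three claims rather than reprove them in full; in the paper itself I would simply cite the original sources \cite{Dou87,Her87,Sw98} and the Petersen / Petersen--Zakeri papers.

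For the Siegel disk being a quasidisk I would use the Douady--Ghys--Herman--Shishikura quasiconformal surgery. Take a Blaschke product $B$ preserving $\ol\disk$ with fixed point $0$ of multiplier $\la$ and a critical point on $\uc$. Bounded type of $\ta$ lets one invoke Herman's theorem: $B|_{\uc}$ is quasisymmetrically conjugate to the rigid rotation $R_\ta$ by some $h:\uc\to\uc$. Extend $h$ quasiconformally to $\ol\disk$, define a new map as $R_\ta$ on $\disk$ and as $B$ outside (after reflecting across $\uc$), and one gets a quasiregular map of $\C$ with a $B$-invariant Beltrami coefficient. Straightening via the measurable Riemann mapping theorem yields a quadratic polynomial affinely conjugate to $Q_\la$, and the image of $\uc$ under the straightening is $\d\Delta(Q_\la)$, which is therefore a quasicircle.

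Knowing that $\d\Delta(Q_\la)$ is a quasicircle, for local connectivity of $J(Q_\la)$ I would follow Petersen's puzzle construction. Cut the plane by $\d\Delta(Q_\la)$ together with a finite collection of external rays landing at suitable repelling (pre)periodic points; the resulting puzzle pieces, together with their pullbacks under $Q_\la$, form a graded tiling of a neighborhood of $J(Q_\la)$. Bounded type of $\ta$ supplies the uniform geometric estimates (quasiconformal moduli of annuli between nested pieces staying bounded below) needed to show that nests of puzzle pieces shrink to points, which is exactly local connectivity. The zero Lebesgue measure statement then follows from the Petersen--Zakeri theorem: away from $\d\Delta(Q_\la)$ the same puzzle geometry controls measure, and near the Siegel boundary one uses first-return renormalization together with quasiconformal distortion bounds to propagate measure-zero from an invariant Cantor set to all of $J(Q_\la)$.

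The main obstacle common to all three arguments is the bounded-type hypothesis on $\ta$: it is needed to apply Herman's rigidity theorem on $\uc$ (for the surgery), to get uniform geometric control on puzzle pieces (for local connectivity), and to obtain the distortion estimates for the first-return map (for measure zero). For Brjuno rotation numbers that are not of bounded type, each of these three conclusions becomes considerably more delicate or remains open, which is why the main theorem of the paper also imposes this hypothesis throughout.
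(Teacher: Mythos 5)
Your proposal is correct and matches the paper's treatment: the paper does not prove Theorem \ref{t:Pe} but cites it, attributing the quasidisk statement to Douady--Ghys--Herman--Shishikura (via \cite{Dou87,Her87,Sw98}) and both the local connectivity and zero-measure statements to Theorem A of Petersen \cite{Pe}, and your sketches of the surgery and puzzle arguments are accurate summaries of those sources. One small attribution slip: for bounded-type $\ta$ the zero Lebesgue measure conclusion is already part of Petersen's 1996 Theorem A (the same puzzle machinery gives both local connectivity and measure zero); the Petersen--Zakeri work you mention is the later extension of these results to almost every rotation number, so it is not the reference the paper relies on here.
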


The second part of Theorem \ref{t:Pe} is in fact a theorem of Douady--Ghys--Herman--Shishikura
 (a proof can be obtained as a combination of \cite{Dou87} and a Theorem
 of Herman and \'Swi\c{a}tek  \cite{Her87,Sw98}).
The first part is Theorem A of \cite{Pe}.

To parameterize cubic polynomials with fixed point at $0$ and marked critical points,
we work with the space $\C^*_\la$ of polynomials
$$
P_{c, \la}(z)=P_c(z)=\la z\left(1-\frac{1}{2}\left(1+\frac{1}{c}\right)z+\frac{1}{3c}z^2\right)
$$
(we often fix $\la$ and then omit it in the notation).
The parameter $c$ is chosen so that $P_c$ has critical points $1$ and $c$, and
$\la$ is the multiplier of the fixed point $0$. It is easy to verify that
$P_c$ and $P_{1/c}$ are linearly conjugate and, hence, $[P_c]_0=[P_{1/c}]_0$.
The map $c\mapsto P_c$ establishes an isomorphism between $\C^*=\C\sm\{0\}$ and $\C^*_\la$.

If $|\la|<1$, then $0$ is an attracting fixed point of $P_c$.
Let us now fix $\la=e^{2\pi i\ta}$, where $\ta\in\R/\Z$ is of bounded type,
and describe the results of \cite{Za} where this case was studied in great detail.
First, the disk $\Delta(P_c)$ is non-degenerate, at least one critical point of $P_c$
still belongs to $\partial \Delta(P_c)$, and $\Delta(P_c)$ is a quasidisk.
Let $\Zc^c_\la$ be the set $\{P_c\in\C^*_\la\mid \{c, 1\}\subset\d\Delta(P_c)\}$.
The set $\Zc^c_\la$, called the \emph{Zakeri curve}, is a Jordan curve.
It divides the punctured plane $\C^*_\la$ into two components, $\Oc^*_\la(0)$ and $\Oc^*_\la(\infty)$,
 each isomorphic to the punctured disk $\disk\sm\{0\}$.
The corresponding punctures are $c=0$ and $c=\infty$, respectively.

Since $P_c$ and $P_{1/c}$ are linearly conjugate, the involution $c\mapsto 1/c$
 interchanges the punctured disks $\Oc^*_\la(0)$ and $\Oc^*_\la(\infty)$ and maps $\Zc^c_\la$ to itself.
Observe that $P_1$ and $P_{-1}$ always belong to $\Zc^c_\la$.
Moreover, the following holds:
\begin{enumerate}
  \item if $c\in \Oc^*_\la(0)$ then $c\in \d \Delta(P_c)$ and $1\notin \partial \Delta(P_c)$,
  \item if $c\in \Zc^c_\la$ then $c, 1\in \d \Delta(P_c)$, and
  \item if $c\in \Oc^*_\la(\infty)$ then $c\notin \d\Delta(P_c)$ and $1\in \partial \Delta(P_c)$.
\end{enumerate}
Every class in $\C_\la$ is represented by polynomials $P_c$, $P_{1/c}\in \C^*_\la$ (with suitable $c$).
Thus, the space $\C_\la$ identifies with a quotient of $\C^*_\la$.
The corresponding quotient map $\tau$ identifies $P_c$ with $P_{1/c}$.
It restricts to homeomorphisms on $\Oc^*_\la(0)$ and on $\Oc^*_\la(\infty)$ and folds $\Zc^c_\la$ to a simple arc $\Zc_\la$.
Moreover, the quotient projection is two-to-one on $\Zc^c_\la$ except two points $P_1$ and $P_{-1}$.
The space $\C_\la$ can be described as $\tau(\Oc^*(\infty)\cup\Zc^c_\la)$; this description will be often used in the sequel.

Recall that $\Cc_\la\subset \C_\la$ is the connectedness locus in $\C_\la$;
write $\Cc^c_\la\subset\C^*_\la$ for the corresponding connectedness locus in $\Oc^*_\la(\infty)\cup\Zc^c_\la$.
In other words, $\Cc^c_\la$ consists of polynomials $P\in\Oc^*_\la(\infty)\cup\Zc^c_\la$ such that $K(P)$ is connected.
The superscript ``c'' in the notation $\Cc^c_\la$ means that $c$ is the free critical point
 (the other critical point $1$ is associated with the Siegel point $0$).
More generally, this superscript appears in the notation of a parameter space object if this object belongs to
(or is contained in) $\C^*_\la$
 (in particular, critical points are marked), and $c$ can be regarded as a free critical point.
Note that $\Zc^c_\la\subset \Cc^c_\la$ as for $c\in \Zc^c_\la$ we have $c, 1\in \d\Delta(P_c)$
(and, hence, both critical points of $P_c$ are non-escaping).
The set $\Cc_\la$ coincides with the image of $\Cc^c_\la$
 under the quotient map $\tau$.
We want to describe the structure of the connectedness loci $\Cc^c_\la$ and $\Cc_\la$.

\begin{thm}[\cite{Za}]
\label{t:Za-qs}
  If $P\in\Cc^c_\la$, then $1\in \d \Delta(P)$ and $\d \Delta(P)$
is a quasicircle
depending continuously on $P\in\Cc^c_\la$ in the Hausdorff metric.
\end{thm}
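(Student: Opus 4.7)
The plan is to dispose of the three claims in order, using the trichotomy recalled before the theorem for the first, a quasiconformal surgery of Douady--Ghys--Herman--Shishikura type for the quasicircle assertion, and uniform control on that surgery for the Hausdorff continuity.

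For the first claim, observe that $\Cc^c_\la\subset \Oc^*_\la(\infty)\cup\Zc^c_\la$ by definition, since only polynomials lying in $\Oc^*_\la(\infty)\cup\Zc^c_\la$ are under consideration in this slice. In both cases (2) and (3) of the trichotomy stated just above the theorem, the ``bound'' critical point $1$ belongs to $\d\Delta(P_c)$. Hence $1\in\d\Delta(P)$ whenever $[P]=[P_c]\in\Cc^c_\la$, with no further work.

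For the quasicircle property, my plan is a surgery construction in the spirit of \cite{Dou87,Her87,Sw98}, adapted to the cubic case. First, fix a degree-$3$ Blaschke model $B:\ol\disk\to\ol\disk$ whose restriction to $\uc$ has rotation number $\ta$ and a critical point on $\uc$ modelling the boundary behavior of $P$ at $1$. Because $\ta$ is of bounded type, the Herman--\'Swi\c{a}tek theorem provides a quasisymmetric conjugacy $h:\uc\to\uc$ between $B|_{\uc}$ and the rigid rotation $z\mapsto\la z$. Extend $h$ by Ahlfors--Beurling to a quasiconformal map $H:\ol\disk\to\ol\disk$, and use $H$ to paste the rotation $\la z$ inside $\disk$ onto the dynamics of $B$ outside; the push-forward dilatation lives only on $\disk$ and is bounded. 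Now glue: use the Riemann uniformization of $\C\sm\ol\Delta(P)$ to transport $B$ into $\C\sm\Delta(P)$, producing a quasi-regular map $F$ that agrees with $P$ outside a neighborhood of $\Delta(P)$, carries a genuine invariant rotation on $\Delta(P)$, and has bounded dilatation concentrated inside $\Delta(P)$. Pull back the invariant Beltrami coefficient and integrate via the Measurable Riemann Mapping Theorem to obtain a quasiconformal map $\Phi:\C\to\C$ with $\Phi\circ F\circ\Phi^{-1}$ a polynomial, linearly conjugate to $P$ (use the marked fixed point $0$ and the behavior at $\infty$ to normalize). Then $\d\Delta(P)=\Phi(\uc)$ is a quasicircle.

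For Hausdorff continuity, the crucial point is that all quasiconformal distortions in the above construction are bounded by a constant $K=K(\ta)$ depending only on the continued-fraction bound for $\ta$, not on $P$. Normalizing $\Phi_P$ so that $\Phi_P(0)=0$, $\Phi_P(\infty)=\infty$ and $\Phi_P(1)=1$, the family $\{\Phi_P\}_{P\in\Cc^c_\la}$ is a normal family of $K$-quasiconformal maps, and a compactness-plus-uniqueness argument shows that $P\mapsto\Phi_P$ is continuous in the local uniform topology on $\C$. In particular $P\mapsto\Phi_P(\uc)=\d\Delta(P)$ is continuous in the Hausdorff metric. One can also argue via holomorphic motions: the linearizations $\psi_P:\disk\to\Delta(P)$ assemble into a holomorphic motion of $\disk$ over the relevant parameter region which, by the Slodkowski/Ma\~n\'e--Sad--Sullivan extension, extends to $\ol\disk$ with uniform quasisymmetric control on $\uc$, and evaluation at $\uc$ yields the Hausdorff continuity.

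The main obstacle is ensuring \emph{uniformity} in $P$ at every step: the Herman--\'Swi\c{a}tek constant must be chosen depending only on $\ta$, the gluing region must be chosen to vary continuously with $P$, and the normalization of $\Phi_P$ must be compatible with linear-conjugacy identifications on $\Cc^c_\la$. Once this uniformity is in place, both the quasicircle property and Hausdorff continuity fall out of standard MRMT parameter dependence.
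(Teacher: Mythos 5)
The theorem you are asked about is not actually proved in the paper: it is stated with the attribution [Za] and is used as a black box imported from Zakeri's work. So there is no in-paper proof to compare against; your task is effectively to reconstruct Zakeri's argument, and this is where the issues lie.

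Your first claim is fine: by definition $\Cc^c_\la$ consists of polynomials $P_c$ with $c\in\Oc^*_\la(\infty)\cup\Zc^c_\la$, and in cases (2) and (3) of the trichotomy one has $1\in\d\Delta(P_c)$, so $1\in\d\Delta(P)$ for all $P\in\Cc^c_\la$ with no further argument needed.

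The quasicircle and continuity claims, however, contain a genuine gap. First, the Blaschke model for a \emph{cubic} surgery is a degree-$5$ Blaschke fraction, not degree $3$. A degree-$3$ model is the one used for quadratics; Zakeri's family (recalled in Section \ref{ss:petzak} of the paper) is
$B(z)=e^{2\pi i t} z^3\bigl(\frac{z-p}{1-\ol pz}\bigr)\bigl(\frac{z-q}{1-\ol qz}\bigr)$
with $|p|,|q|>1$, precisely so that the restriction to $\uc$ has topological degree $3-2=1$ and the model carries both a critical point on $\uc$ and a second ``free'' critical pair. More importantly, your plan reverses the direction of the surgery in a way that introduces circularity: you propose to uniformize $\C\sm\ol\Delta(P)$ and transplant the Blaschke dynamics onto the outside of the (given, possibly wild) Siegel boundary, extend the rotation quasiconformally inside, and straighten. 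But to define this ``in place'' pasting you need to match the Blaschke boundary dynamics with the boundary restriction $P|_{\d\Delta(P)}$, which in turn requires knowing already that $\d\Delta(P)$ is a Jordan curve on which $P$ acts as a critical circle homeomorphism --- i.e., the very thing you are trying to prove. Zakeri's surgery goes the other direction: it starts from the Blaschke fraction, modifies it inside $\disk$, straightens, and \emph{constructs} a cubic polynomial with quasidisk Siegel disk. The heart of the theorem for cubics is then the surjectivity of the resulting surgery map $\Sc$ onto $\Cc^c_\la$ (so that \emph{every} $P\in\Cc^c_\la$ is realized, whence $\d\Delta(P)=\varphi_B(\uc)$ with $\varphi_B$ uniformly qc); this surjectivity is Zakeri's main contribution and is not addressed at all in your proposal. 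Your Hausdorff-continuity argument (uniform qc bounds depending only on $\ta$, compactness, Slodkowski extension) is sound in outline but relies on the very realization of $P$ via the surgery that you have not established.
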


Define the set $\Pc^c_\la$ as the subset of $\Cc^c_\la$ consisting of polynomials that can be approximated by
 sequences $P_{n}\in\Cc^c_{\la_n}$ with $|\la_n|<1$ and both critical points of $P_n$ in the immediate basin of $0$.
This is the central part of $\Cc^c_\la$ which we want to model.
It is easy to see that $\Pc^c_\la$ is a compactum containing $P_1$
(indeed, polynomials $P_{r\la, 1}$ converge to $P_1$ as $r\nearrow 1$ and, on the other hand,
are such that both critical points belong to the immediate basin of $0$).

\begin{lem}
\label{l:pc-conn}
The sets $\Pc^c_\la$ and $\Pc_\la$ are connected.
\end{lem}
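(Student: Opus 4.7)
The plan is to reduce the connectedness of $\Pc_\la$ to that of $\Pc^c_\la$: the quotient map $\tau\colon\C^*_\la\to\C_\la$ is continuous and surjects $\Pc^c_\la$ onto $\Pc_\la$, so $\Pc_\la$ is connected once $\Pc^c_\la$ is. The essential input is that, for each $\mu$ with $|\mu|<1$, the principal hyperbolic component
$$
\Hc_\mu=\{c\in\C^*\colon P_{c,\mu}\text{ has both critical points in the immediate basin of }0\}
$$
is a nonempty, open, connected, bounded subset of $\C^*$, staying a positive distance from $0$. Connectedness is a classical fact about cubic polynomials with Jordan curve Julia set, following from the parametrization by degree-three Blaschke products (in the spirit of \cite{PT09}); openness is hyperbolicity; the uniform bounds come from the escape of the free critical point as $|c|\to 0$ or $|c|\to\infty$. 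Equivalently, $\tilde\Hc=\bigcup_{|\mu|<1}\Hc_\mu\times\{\mu\}$ is an open subset of $\C^*\times\disk$.

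I would prove the connectedness of $\Pc^c_\la$ by contradiction. Suppose $\Pc^c_\la=A\sqcup B$ with $A,B$ nonempty, compact, and disjoint, and pick disjoint open neighborhoods $U_A\supset A$, $U_B\supset B$ in $\C^*$ with $\overline{U_A}\cap\overline{U_B}=\emptyset$. A standard compactness argument shows that, for every $\mu$ sufficiently near $\la$ with $|\mu|<1$, one has $\Hc_\mu\subset U_A\cup U_B$: otherwise one could extract $(c_n,\mu_n)\in\tilde\Hc$ with $\mu_n\to\la$ and $c_n\notin U_A\cup U_B$, and by uniform boundedness pass to a convergent subsequence $c_n\to c\in\C^*$ with $(c,\la)\in\Pc^c_\la\setminus(U_A\cup U_B)\supset\Pc^c_\la$, a contradiction. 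Connectedness of $\Hc_\mu$ then gives the dichotomy $\Hc_\mu\subset U_A$ or $\Hc_\mu\subset U_B$.

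Without loss of generality $P_1\in A$, so $1\in U_A$. The excerpt's observation that, for $r$ near $1$, the polynomial with $c=1$ and multiplier $r\la$ has both critical points in the immediate basin of $0$ translates to $(1,r\la)\in\tilde\Hc$. Openness of $\tilde\Hc$ around such points, together with the dichotomy, forces $\Hc_\mu\subset U_A$ for all $\mu$ in a neighborhood of the radial arc $\{r\la\colon r\nearrow 1\}$, and (using the quasiconformal stability of the Siegel disk for bounded-type $\la$) for all $\mu$ in a full neighborhood of $\la$ in $\{|\mu|<1\}$. Consequently every approximating sequence $(c_n,\mu_n)\in\tilde\Hc$ for any point of $\Pc^c_\la$ eventually has $c_n\in\Hc_{\mu_n}\subset U_A$, so $\Pc^c_\la\subset\overline{U_A}$. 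Since $\overline{U_A}\cap U_B=\emptyset$, this forces $B=\emptyset$, a contradiction, and hence $\Pc^c_\la$ is connected.

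The principal obstacle I anticipate is twofold: (i) the classical connectedness of $\Hc_\mu$ for each $|\mu|<1$ is a nontrivial result about the principal hyperbolic component, ultimately resting on the Blaschke parametrization; and (ii) upgrading from $\Hc_\mu\subset U_A$ for $\mu$ near the radial arc to $\Hc_\mu\subset U_A$ for all $\mu$ in a full neighborhood of $\la$ requires a local quantitative version of openness of $\tilde\Hc$, which should follow from the bounded-type hypothesis on $\la$ and the continuity of the Siegel disk boundary provided by Theorem~\ref{t:Za-qs}.
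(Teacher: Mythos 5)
Your reduction via $\tau$ and the external input --- connectedness of the principal hyperbolic component $\mathcal{H}^c_\mu$ for $|\mu|<1$ from \cite{PT09}, together with the unicritical base point $P_1$ --- are exactly the paper's ingredients, but the surrounding argument is genuinely different. The paper argues directly: given $P\in\Pc^c_\la$ approximated by $P_{(n)}\in\mathcal{H}^c_{\la_n}$, pass to a subsequence along which the compacta $\ol{\mathcal{H}^c_{\la_n}}$ converge in the Hausdorff metric; the Hausdorff limit of continua is a continuum, it lies in $\Pc^c_\la$, and it contains both $P$ and $P_1$ (the latter because the unicritical polynomial $P_{\la_n,1}$ lies in $\mathcal{H}^c_{\la_n}$ and tends to $P_1$). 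So every point of $\Pc^c_\la$ is joined to $P_1$ by a subcontinuum, and connectedness follows at once. Your separation-by-contradiction using $U_A$, $U_B$ and the per-$\mu$ dichotomy is a valid alternative; it is longer, but both routes rest on the same structural insight that $\Pc^c_\la$ accumulates the connected sets $\mathcal{H}^c_\mu$.

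The only real gap is the step you flag as obstacle (ii), and your proposed patch is the wrong tool. Quasiconformal stability of the Siegel disk (Theorem~\ref{t:Za-qs}) concerns the slice $|\mu|=|\la|=1$ and says nothing about $\mathcal{H}^c_\mu$ for $|\mu|<1$; the bounded-type hypothesis plays no role in this step. The correct fix uses only the openness of the set you call $\tilde{\mathcal{H}}$ and elementary connectedness. Let $N=\{\mu: |\mu-\la|<\eps\}\cap\disk$ be a half-disk small enough that the dichotomy holds for every $\mu\in N$; it is connected. The subset $S=\{\mu\in N:\mathcal{H}^c_\mu\subset U_A\}$ is open in $N$: if $c_0\in\mathcal{H}^c_{\mu_0}\cap U_A$, then openness of $\tilde{\mathcal{H}}$ keeps $c_0\in\mathcal{H}^c_\mu\cap U_A$ for $\mu$ near $\mu_0$, and the dichotomy forces $\mathcal{H}^c_\mu\subset U_A$. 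The symmetric argument (ruling out $\mathcal{H}^c_{\mu_0}\subset U_B$) shows $S$ is closed in $N$. Since $S$ is nonempty by the radial arc and $N$ is connected, $S=N$, which is what you need. Note also that each $\mathcal{H}^c_\mu$ is nonempty, as required by this argument: $P_{\mu,1}\in\mathcal{H}^c_\mu$, because by Fatou's theorem the immediate basin of $0$ must contain a critical point, and the unicritical polynomial has only one.
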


\begin{proof}
The quotient projection $\tau$ from $\C^*_\la$ to $\C_\la$ is a branched 2-1 covering
 with the only branch points at $P_{\pm 1}\in\Pc^c_\la$.
Therefore, connectedness of $\Pc^c_\la$ is equivalent to connectedness of $\Pc_\la$.
For every $\la'\in\disk(1)$, the principal hyperbolic component $\mathcal{H}^c_{\la'}$ of $\C^*_{\la'}$
 is defined as the set of all hyperbolic $P\in\C^*_{\la'}$ such that $J(P)$ is a Jordan curve.
It follows from \cite{PT09} (and the fact that branch points of $\tau:\C^*_{\la'}\to\C_{\la'}$ lie
 in $\mathcal{H}^c_{\la'}$) that $\mathcal{H}^c_{\la'}$ is connected.
Now consider a sequence $P_{(n)}\to P\in\Pc^c_\la$, where $P_{(n)}\in\Pc^c_{\la_n}$ and $|\la_n|<1$.
It follows that $\la_n\to\la$.
It suffices to find a connected subset of $\Pc^c_\la$ containing both $P$ and $P_1$.
Then every point $P\in\Pc^c_\la$ is connected to $P_1$, hence $\Pc^c_\la$ is connected.

Passing to a subsequence, we may assume that continua $\ol{\mathcal{H}^c_{\la_n}}$ converge in the Hausdorff metric.
Moreover, each $\mathcal{H}^c_{\la_n}$ contains a unicritical (i.e., with a multiple critical point in $\C$)
 polynomial $P_{1,(n)}\to P_1$.
The limit continuum then contains both $P$ and $P_1$, as desired.
\end{proof}

Since $\la$ is fixed, it can be omitted from the notation of $Q=Q_\la$.
We will define a continuous map $\Phi^c_\la$ from $\Pc^c_\la$ to the model space $K(Q)\sm\Delta(Q)$.
Note that, in the case of marked critical points, the model space is simpler as we do not pass to a quotient.
This map is conjecturally a homeomorphism.
As often happens in holomorphic dynamics,
 the definition of $\Phi^c_\la$ depends on a certain map between dynamical planes.
More precisely, we will define a $P$-invariant continuum $X(P)\subset K(P)$ and a continuous map $\eta_P:X(P)\to K(Q)$
 such that $\eta_P\circ P=Q\circ\eta_P$ on $X(P)$.
The following theorem makes the Main Theorem more specific.

\begin{thm}
  \label{t:main1}
  The map $\eta_P:X(P)\to K(Q)$ is monotone for every $P\in\Cc^c_\la$ except when $c\in X(P)\sm J(P)$.
For every $P_c\in\Pc^c_\la$, the critical point $c$ is in $X(P_c)$.
The map $\Phi^c_\la:P_c\mapsto \eta_{P_c}(c)$ is defined and continuous on $\Pc^c_\la$.
It takes values in $K(Q)\sm\Delta(Q)$.
\end{thm}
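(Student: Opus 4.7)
The plan is to establish the four assertions in turn, using Property D as the dynamical engine together with the Siegel-disk stability results of Theorems~\ref{t:Pe} and~\ref{t:Za-qs}.

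Monotonicity of $\eta_P$ reduces almost immediately to Property D. For any $P\in \Cc^c_\la$, Theorem~\ref{t:Za-qs} places the critical point $1$ on $\d\Delta(P)\subset J(P)$, so both critical points of $P$ lie in $J(P)$ precisely when $c\in J(P)$. In the complementary situation, $c$ is in the Fatou set, and either $c\notin X(P)$ (in which case the assertion is vacuous) or $c\in X(P)\sm J(P)$, which is the excluded case. So only the first alternative is nontrivial, and Property D then applies.

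To show that $c\in X(P_c)$ whenever $P_c\in\Pc^c_\la$, I would approach $P_c$ by polynomials $P_{(n)}\in\Cc^c_{\la_n}$ with $|\la_n|<1$ having both critical points in the immediate basin of $0$, which exist by the very definition of $\Pc^c_\la$. For each $n$, the critical point $c_n$ of $P_{(n)}$ is automatically in the corresponding continuum $X(P_{(n)})$; taking Hausdorff limits along a subsequence and appealing to continuity of $P\mapsto K(P)$ on the connectedness locus and of $P\mapsto\ol{\Delta(P)}$ (Theorem~\ref{t:Za-qs}), one concludes $c\in X(P_c)$. A small subtlety is that the construction of $X(P)$ behind Property D must be stable under perturbations crossing from attracting to neutral multipliers; the bounded-type hypothesis is what makes this stability uniform.

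The main obstacle is continuity of $\Phi^c_\la$. My plan is to exploit an explicit description of $\eta_P$ as a quotient: partition $X(P)$ by a $P$-invariant equivalence relation that is determined on $\d\Delta(P)$ by identifying points with equal real part under $\ol\psi_P^{-1}$ and, elsewhere on $X(P)$, by pulling back along $P$; then $\eta_P$ is the quotient map followed by a homeomorphism onto $K(Q)$. With this picture in hand, continuity of $\Phi^c_\la$ follows once $X(P)$ and the equivalence relation depend continuously on $P\in\Pc^c_\la$. Hausdorff continuity of $X(P)$ uses Theorem~\ref{t:Za-qs} together with continuity of $P\mapsto K(P)$ on $\Cc^c_\la$; continuity of the equivalence relation reduces to Hausdorff continuity of $\d\Delta(P)$ and continuity of the linearization $\ol\psi_P^{-1}$ on its boundary, which again rests on the bounded-type assumption through quasi-symmetric control of the rotation on $\d\Delta(P)$. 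This is the step I expect to be most delicate, since the equivalence relation must be shown to glue continuously across perturbations that move the critical point $c$ between the various strata $\Oc^*_\la(0)$, $\Zc^c_\la$ and $\Oc^*_\la(\infty)$.

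Finally, $\Phi^c_\la(P_c)\in K(Q)\sm\Delta(Q)$ is a no-escape statement in the target. Since $\eta_{P_c}$ semi-conjugates $P_c|_{X(P_c)}$ with $Q|_{\eta_{P_c}(X(P_c))}$ and the open Siegel disk $\Delta(Q)$ contains no critical values of $Q$ (its only critical point on the boundary being $1$), the preimage $\eta_{P_c}^{-1}(\Delta(Q))$ cannot meet a critical point of $P_c$. Hence $\eta_{P_c}(c)\notin \Delta(Q)$, completing the proof.
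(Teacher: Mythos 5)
Your proposal has genuine gaps in three of the four parts. The most serious is the monotonicity argument: you invoke Property~D to conclude that $\eta_P$ is monotone when $c\in J(P)$, but the monotonicity clause of Property~D \emph{is} the assertion being proved here --- the paper only establishes it as part of Theorem~\ref{t:etaPcont}, via the Siegel-wedge machinery (the separation property of Proposition~\ref{p:sep-w}, the structure of $X(P)$ inside bubbles in Theorem~\ref{t:XPinbub}, and the connectedness of finite intersections of Siegel wedges in Lemma~\ref{l:intW}, which together show each fiber is a nested intersection of connected sets). Citing Property~D is circular; none of that machinery appears in your proposal. The second gap is the claim that $c\in X(P_c)$ for $P_c\in\Pc^c_\la$ follows by taking Hausdorff limits of $X(P_{(n)})$ along the approximating sequence. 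The set $X(P)$ is built from legal arcs and is not known to vary continuously (or even suitably semicontinuously) across the transition from attracting to Siegel multipliers; you flag this but dismiss it by appeal to the bounded-type hypothesis, which does not by itself supply the needed stability. The paper avoids this entirely by arguing the contrapositive: if $c\notin X(P)$ then $X(P)$ is the filled Julia set of a quadratic-like restriction (Proposition~\ref{p:Dc}, via Theorem~\ref{t:thmb}), quadratic-like maps persist under perturbation, and the perturbed filled Julia set would be a Jordan disk containing the immediate basin of $0$ on which the map is two-to-one --- incompatible with both critical points lying in that basin, so $P\notin\Pc^c_\la$ (Corollary~\ref{c:PcDc}).

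The continuity argument also does not work as stated. You describe $\eta_P$ as a quotient by an equivalence relation determined on $\d\Delta(P)$ by equal real parts under $\ol\psi_P^{-1}$; that identification defines the model space $\tilde K(Q)$ for the unmarked map $\Phi_\la$, not the fibers of $\eta_P$, which are defined by matching multi-angles and polar radii. Moreover, deducing continuity of $P\mapsto\eta_P(c)$ from ``continuity of $X(P)$ and of the equivalence relation'' presupposes exactly the Hausdorff continuity that is unavailable. The paper's route is different: stability (equicontinuous/holomorphic motion) of legal arcs, periodic bubble rays, and Siegel wedges (Lemma~\ref{l:regarc-mov}, Theorems~\ref{t:stab} and~\ref{t:S-stab}), combined with the separation property to trap $\Phi^c_\la(P_{c_n})$ in adapted wedges, plus a separate delicate analysis at the unicritical parameter $P_1$ (Lemma~\ref{l:P1}). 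Only your last point --- that $\Phi^c_\la$ takes values in $K(Q)\sm\Delta(Q)$ --- is essentially fine, since $\eta_P^{-1}(\Delta(Q))=\Delta(P)$ by construction and $\Delta(P)$, on which $P$ is injective, contains no critical point.
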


The Main Theorem follows directly from Theorem \ref{t:main1} by applying the quotient projection $\tau$ from $\C^*_\la$ to $\C_\la$.

\subsection*{Plan of the paper}
In Section \ref{s:bub}, the principal tools of this paper are developed.
These include bubbles, legal arcs, and Siegel rays.
To an extent, Siegel rays compensate for the absence of repelling cutpoints in the central part of $K(P_c)$
 (here $P_c\in \Cc^c_\la$).
They form a controllable combinatorial structure, map forward in a regular way,
 and divide the central part of $K(P)$ into smaller pieces.
Section \ref{s:stab} discusses the issues of stability.
Roughly speaking, a dynamically defined set is stable if it moves continuously as we change the parameters.
Outside of the Zakeri curve, stability can be defined in customary language of holomorphic motions.
However, since $\Cc^c_\la$ is not a Riemann surface at points of the boundary curve $\Zc^c_\la$,
 we need to consider a more general notion of an equicontinuous motion.
The principal results of Section \ref{s:stab} claim that Siegel rays are stable.
Section \ref{s:dynmap} deals with the dynamical map $\eta_P:X(P)\to K(Q)$ defined on the central part $X(P)$ of $K(P)$.
In particular, property D is established for this map.
Finally, Section \ref{s:par} concludes the proof of Theorem \ref{t:main1} and the Main Theorem.

\section{Bubbles}
\label{s:bub}

\subsection{An overview of \cite{Za}}
\label{ss:petzak}
Consider \emph{Blaschke fractions}, i.e., products of \emph{generalized Blaschke factors} $\frac{z-p}{1-\ol pz}$
 \textbf{without} assuming that $|p|<1$ (in a classical Blaschke factor, $p$ has to belong to $\disk$).
The Blaschke fractions have some common properties with the classical Blaschke products.
In particular, they have the inversion self-conjugacy:
 if $B(z)$ is a Blaschke fraction, then $\frac{1}{\ol{B(z)}}=B(\frac{1}{\ol z})$, i.e.,
 the inversion $z\mapsto \frac{1}{\ol z}$ conjugates $B$ with itself.
It follows that critical points of $B$ are split in two groups: critical points inside $\cdisk$ and their inversions with respect to
$\uc$ that are located outside $\disk$.

In order to describe the structure of $\Cc^{c}_\la$, Zakeri introduced an auxiliary family of
 degree 5 Blaschke fractions given by
$$
B(z)=e^{2\pi i t} z^3\left(\frac{z-p}{1-\ol pz}\right)\left(\frac{z-q}{1-\ol qz}\right),
$$
where $|p|>1, |q|>1$. In addition to the inversion self-conjugacy (and, hence, the inversion symmetry of their critical points)
the restrictions of these maps on $\uc$ (which is invariant) are homeomorphisms.
Indeed, by the Argument Principle, the topological degree of $B:\uc\to\uc$ is equal to the number of zeros
 minus the number of poles (counting multiplicities) of $B$ in $\disk$.
The latter number is $3-2=1$ (triple zero at $0$ and simple poles at $1/\ol p$ and $1/\ol q$).

Zakeri chooses $p$ and $q$ so that $B$ has a multiple critical point in
$\uc$ and two critical points $c_B$, $1/\ol c_B$ that may or may not
belong to $\uc$. The angle $t\in\R/\Z$ is adjusted so that
$B:\uc\to\uc$ has rotation number $\ta$. Consider the Blaschke products
$B$ as above, with marked critical points and normalized (via
conjugation by a rigid rotation) so that one of the critical points is
$1$. Then the space $\Bc_\la$ of all such $B$'s is parameterized by a
single complex parameter $\mu\in\C\sm\disk$ (recall that $\la=e^{2\pi
i\ta}$; thus the dependence on $\ta$ is expressed through $\la$) such
that
the critical points of $B_{\mu}$ are $\mu$ and $1$.
Note also that $B_\mu$ and $B_{1/\mu}$ are linearly conjugate for $\mu\in\uc$
but we distinguish them as elements of $\Bc_\la$.

By a theorem of Herman and \'Swi\c{a}tek \cite{Sw98},
the map $B:\uc\to\uc$ is quasi-symmetrically (qs) conjugate to a rigid rotation.
Consider a qc-extension $H=H_B:\ol\disk\to\ol\disk$ of this quasi-symmetric conjugacy
(take the Douady--Earle extension \cite{de86} to make the construction unique), and define the \emph{modified Blaschke product}
$\tilde B$ as $B(z)$ for $|z|\ge 1$ and $H^{-1}\circ \mathrm{Rot}_\ta\circ H(z)$ for $|z|<1$.
Here $\mathrm{Rot}_\ta$ is the rigid rotation about $0$ by angle $\ta$.
Finally, $\tilde B$ is shown to be qc conjugate to a cubic polynomial $P\in\Cc^{c}_\la$ by finding
a certain $\tilde B$-invariant conformal structure $\sigma$ on $\ol\C$, and straightening it.
Here the critical point $c$ of $P$ corresponds to the critical point $\mu$ of $B$.
Define the \emph{non-escaping locus} of $\Bc_\la$ as the set of $B_\mu\in\Bc_\la$ for which the orbit of $\mu$ is bounded.
Set $P=\Sc(B)$; the map $\Sc$ from the non-escaping locus of $\Bc_\la$ to $\Cc^{c}_\la$ is called the \emph{surgery map}.
The following proposition is Corollary 10.5 of \cite{Za}.

\begin{prop}
  \label{p:C10.5Za}
  There is an equicontinuous family of qc homeomorphisms $\varphi_B:\C\to\C$ parameterized by $B$
  in the non-escaping locus of $\Bc_\la$
such that $\Sc(B)=\varphi_B\circ\tilde B\circ\varphi_B^{-1}$,
and normalized so that $\varphi_{B}(1)=1$.
\end{prop}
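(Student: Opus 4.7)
The plan is to implement the standard quasiconformal surgery construction, taking care that the resulting conjugacies depend in a controlled way on the parameter $B \in \Bc_\la$. First, for each $B$ in the non-escaping locus I would construct a $\tilde B$-invariant Beltrami differential $\mu_B$ on $\hat\C$. On the exterior $\hat\C \sm \disk$, which is invariant under $\tilde B = B$, keep the standard conformal structure, so $\mu_B = 0$ there. On $\disk$, pull back the standard structure via the Douady--Earle extension $H_B$; since $\tilde B|_{\disk} = H_B^{-1}\circ \mathrm{Rot}_\ta \circ H_B$, the structure $H_B^*\sigma_0$ is automatically $\tilde B$-invariant on $\disk$. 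Then spread $\mu_B$ equivariantly along the grand orbit of $\disk$ under $\tilde B$: on each component of $\tilde B^{-n}(\disk)$ meeting the non-escaping set, define $\mu_B$ by pull-back so that $\tilde B$-invariance holds on all bounded Fatou components. Outside the grand orbit of $\disk$, set $\mu_B=0$.

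Second, the dilatation of $\mu_B$ equals that of $H_B$, and the Herman--\'Swi\c{a}tek theorem together with the bounded-type assumption on $\ta$ provides a uniform bound $\|\mu_B\|_\infty \le k < 1$ independent of $B$. Applying the measurable Riemann mapping theorem yields a qc homeomorphism $\varphi_B : \hat\C \to \hat\C$, normalized by $\varphi_B(0)=0$, $\varphi_B(1)=1$, $\varphi_B(\infty)=\infty$. By construction $\varphi_B\circ\tilde B\circ\varphi_B^{-1}$ preserves the standard conformal structure a.e., hence is holomorphic; counting degree on $\hat\C\sm\varphi_B(\disk)$ (where $\tilde B=B$ is a degree-$5$ Blaschke fraction with the inversion symmetry making it behave like a degree-$3$ map on one side), and using that $0$ is fixed of multiplier $\la$ and $\infty$ is totally invariant, identifies the conjugate as a cubic polynomial $\Sc(B)\in\C^*_\la$ whose marked critical point is $\varphi_B(1)=1$.

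Third, to establish equicontinuity of the family $\{\varphi_B\}$, one uses that all $\varphi_B$ are $K$-qc with a common $K=(1+k)/(1-k)$ and share three normalization points, so the family is normal. To upgrade normality to equicontinuity, one needs continuous dependence of $\mu_B$ on $B$ in the $L^\infty$-sense (on compact sets), which follows via the Ahlfors--Bers theorem from continuity of $H_B$ in $B$ and from continuity of the dynamically propagated structure across generations of preimages of $\disk$. The continuity of $H_B$ comes from continuity of the qs-conjugacy to rigid rotation for Blaschke products with fixed bounded-type rotation number, combined with the canonical nature of the Douady--Earle extension.

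The main obstacle is this last step: controlling $\mu_B$ simultaneously across all generations of preimage bubbles. Two nearby parameters may have combinatorially reorganized preimages of $\disk$ near the free critical orbit, so one must argue that the contribution of the $n$-th generation to $\mu_{B_n}-\mu_B$ decays uniformly as $B_n\to B$. The bounded-type hypothesis on $\ta$ is exactly what provides the uniform geometric bounds (on moduli of nested annuli and on the Douady--Earle dilatations) needed to make this decay argument work. Once continuity of $B\mapsto \mu_B$ is established, the parametric version of the measurable Riemann mapping theorem delivers equicontinuity of $\{\varphi_B\}$.
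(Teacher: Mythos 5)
The paper does not prove this proposition; it simply cites it as Corollary 10.5 of Zakeri's paper \cite{Za}, and the text immediately following records the one ingredient it will actually reuse: all $H_B$ are $K$-quasiconformal for a uniform $K$ depending only on $\ta$, so uniformly $K$-qc families with common normalization are equicontinuous (Theorem 4.4.1 of \cite{hub}). Your reconstruction of the surgery itself is essentially Zakeri's: pull back $\sigma_0$ by $H_B$ on $\disk$, spread equivariantly along the grand orbit, keep $\sigma_0$ elsewhere, bound $\|\mu_B\|_\infty$ uniformly via Herman--\'Swi\c{a}tek and bounded type, straighten, and count degree.

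However, your third and fourth paragraphs misidentify what is needed for the equicontinuity claim, and the ``main obstacle'' you flag is a red herring. You write that one must ``upgrade normality to equicontinuity'' via continuous dependence of $\mu_B$ on $B$. That is not so. Once you have a uniform bound $\|\mu_B\|_\infty\le k<1$ independent of $B$ (which you already established) and a three-point normalization of $\varphi_B$ at $0$, $1$, $\infty$, the family $\{\varphi_B\}$ is contained in the set of normalized $K$-qc homeomorphisms of $\widehat\C$ with $K=(1+k)/(1-k)$; this set is compact in the spherical uniform topology, and a compact family of maps on a compact metric space is automatically equicontinuous. No measurability-in-$B$, no Ahlfors--Bers parametric theorem, and no control of $\mu_{B_n}-\mu_B$ across generations of preimage bubbles is required. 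What continuity of $B\mapsto\mu_B$ (in $L^\infty$ on compacts) would buy is the stronger statement that $B\mapsto\varphi_B$ is a \emph{continuous map}, which is genuinely harder and is what Zakeri proves; but the proposition as stated only asserts equicontinuity of the family, which already follows from the uniform dilatation bound. So the acknowledged gap in your argument is not a gap in a correct proof of the statement; it is a detour toward a stronger result.
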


Set $P=\Sc(B)$. Note that the Siegel disk $\Delta(P)$ of $P$ equals
$\varphi_B(\disk)$,
 and the Riemann map $\psi_{\Delta(P)}:\disk\to\Delta(P)$ coincides with $\varphi_B\circ H_B^{-1}$.
All $H_B$ are quasi-conformal with the same qc constant that depends only on $\ta$.
It follows (cf. Theorem 4.4.1 of \cite{hub}) that $H_B$ and $H_B^{-1}$ form an equicontinuous family.
We obtain the following corollary.

\begin{cor}
  \label{c:equicont}
The extended Riemann maps $\ol\psi_{\Delta(P)}$ (where $P$ varies through $\Cc^{c}_\la$) form an equicontinuous family.
\end{cor}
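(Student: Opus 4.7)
The plan is to deduce the corollary from the factorization $\ol\psi_{\Delta(P)} = \varphi_B \circ H_B^{-1}$ (for $P = \Sc(B)$) stated in the paragraph immediately above, combined with the two equicontinuity statements already cited there: Proposition \ref{p:C10.5Za} for the family $\{\varphi_B\}$, and Theorem 4.4.1 of \cite{hub} for the family $\{H_B^{-1}\}$ of inverses of $K$-qc homeomorphisms of $\ol\disk$ (with $K$ depending only on $\ta$). The whole statement then reduces to the routine observation that a composition of two equicontinuous families on a compact domain is equicontinuous.

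First I would note that, by Zakeri's surgery construction, the map $\Sc$ from the non-escaping locus of $\Bc_\la$ surjects onto $\Cc^{c}_\la$, so it suffices to prove equicontinuity of $\{\varphi_B \circ H_B^{-1}\}$ as $B$ ranges over the non-escaping locus. Since each $\varphi_B$ is a homeomorphism of $\C$ and each $H_B^{-1}$ extends continuously to a homeomorphism $\ol\disk\to\ol\disk$, the composition is well-defined and continuous on $\ol\disk$, agreeing on $\disk$ with the Riemann map $\psi_{\Delta(P)}$ and on $\uc$ with its boundary extension.

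Next I would run the standard two-step $\eps$-$\delta$ argument. Given $\eps > 0$, equicontinuity of $\{\varphi_B\}$ on the compact set $\ol\disk$ (inherited from equicontinuity on all of $\C$) supplies $\delta_1 > 0$ such that $|\varphi_B(x) - \varphi_B(y)| < \eps$ whenever $x, y\in\ol\disk$ with $|x - y| < \delta_1$, uniformly in $B$. Equicontinuity of $\{H_B^{-1}\}$ on $\ol\disk$ then supplies $\delta > 0$ such that $|H_B^{-1}(z) - H_B^{-1}(w)| < \delta_1$ whenever $z, w\in\ol\disk$ with $|z - w| < \delta$, uniformly in $B$. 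Composing these estimates yields the required uniform modulus of continuity for $\{\ol\psi_{\Delta(P)}\}$.

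The only subtlety I anticipate is bookkeeping rather than a genuine obstacle: one must check that the surgery map $\Sc$ really hits every $P\in\Cc^{c}_\la$ (this is part of Zakeri's description of $\Cc^{c}_\la$), and that parameterizing the family by $B$ rather than by $P$ does not affect the meaning of equicontinuity. Neither point requires work beyond what is already supplied in the paragraph preceding the corollary.
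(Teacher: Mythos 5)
Your proposal is correct and follows essentially the same route as the paper: the factorization $\ol\psi_{\Delta(P)}=\varphi_B\circ H_B^{-1}$, equicontinuity of $\{\varphi_B\}$ from Proposition \ref{p:C10.5Za} and of $\{H_B^{-1}\}$ from the uniform qc bound, and the standard composition argument. The surjectivity of $\Sc$ onto $\Cc^c_\la$ that you flag is indeed part of Zakeri's construction and is implicitly used by the paper as well.
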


Let $C(\ol\disk,\C)$ be the space of all continuous maps from
$\ol\disk$ to $\C$ with the sup-norm. Corollary \ref{c:equicont}, in
turn, implies the following.

\begin{cor}
  \label{c:sup-conv}
The map from $\Cc^c_\la$ to $C(\ol\disk,\C)$ taking $P$ to $\ol\psi_{\Delta(P)}$ is continuous.
\end{cor}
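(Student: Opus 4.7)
The plan is an Arzel\`a--Ascoli argument combined with Riemann-map uniqueness. Let $P_n\to P$ in $\Cc^c_\la$, set $\psi_n=\ol\psi_{\Delta(P_n)}$ and $\psi=\ol\psi_{\Delta(P)}$; the goal is $\psi_n\to\psi$ uniformly on $\ol\disk$. By Corollary \ref{c:equicont} the family $\{\psi_n\}$ is equicontinuous, and since $\Cc^c_\la$ is compact the images $\psi_n(\ol\disk)=\ol{\Delta(P_n)}\subset K(P_n)$ are uniformly bounded. Arzel\`a--Ascoli then yields precompactness of $\{\psi_n\}$ in $C(\ol\disk,\C)$, so it suffices to show that every uniform subsequential limit $\psi^*$ of the $\psi_n$ coincides with $\psi$.

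Passing to such a subsequence, I identify $\psi^*$ with a Riemann map onto $\Delta(P)$. Clearly $\psi^*(0)=0$, $\psi^*|_\disk$ is holomorphic by Weierstrass, and letting $n\to\infty$ in $\psi_n(\la z)=P_n\circ\psi_n(z)$ (using that $P_n\to P$ uniformly on compacta) yields the linearization equation $\psi^*(\la z)=P\circ\psi^*(z)$. By Theorem \ref{t:Za-qs} each $\Delta(P_n)$ is a $K(\ta)$-quasidisk containing $0$ and with $1\in\partial\Delta(P_n)$, so its conformal radius at $0$ is bounded below by a positive constant depending only on $\ta$. Hence $|\psi_n'(0)|\ge C>0$, so $|(\psi^*)'(0)|\ge C$ and $\psi^*|_\disk$ is non-constant, hence univalent by Hurwitz. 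Its image is a simply connected $P$-invariant domain on which $P$ is conjugate to the rotation $z\mapsto\la z$, so it lies in the Fatou set and hence in $\Delta(P)$. Since by Theorem \ref{t:Za-qs} the quasicircles $\partial\Delta(P_n)$ converge to $\partial\Delta(P)$ in Hausdorff metric, the closed Jordan disks $\ol{\Delta(P_n)}$ converge in Hausdorff metric to $\ol{\Delta(P)}$; combined with the uniform convergence $\psi_n(\ol\disk)=\ol{\Delta(P_n)}\to\psi^*(\ol\disk)$ this forces $\psi^*(\ol\disk)=\ol{\Delta(P)}$ and $\psi^*(\disk)=\Delta(P)$.

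Thus $\psi^*$ is a Riemann map onto $\Delta(P)$, so $\psi^*=\psi\circ R$ for some rotation $R$ of $\disk$. To pin down $R=\mathrm{id}$ I use the specific normalization in Proposition \ref{p:C10.5Za}: writing $\psi_n=\varphi_{B_n}\circ H_{B_n}^{-1}$ with $\varphi_{B_n}(1)=1$, the equicontinuity of both $\{\varphi_B\}$ and $\{H_B\}$ lets me pass to a further subsequence with $B_n\to B^*$ satisfying $\Sc(B^*)=P$; taking the limit in $\psi_n(H_{B_n}(1))=\varphi_{B_n}(1)=1$ yields $\psi^*(H_{B^*}(1))=1=\psi(H_{B^*}(1))$, which forces $R=\mathrm{id}$.

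The hard part will be this last matching of normalizations, which implicitly relies on continuity (or at least sequential compactness) of the inverse surgery assignment $P\mapsto B$---something not spelled out in the excerpt but implicit in the construction of \cite{Za}. A cleaner alternative would be to bypass the Blaschke model and verify directly that the preimage under $\ol\psi_{\Delta(P)}$ of the critical point $1\in\partial\Delta(P)$ depends continuously on $P\in\Cc^c_\la$, which by itself is enough to fix $R$.
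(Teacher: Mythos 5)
Your overall strategy (Arzel\`a--Ascoli plus uniqueness of subsequential limits) is genuinely different from the paper's, which argues much more directly: by the normalization $\ol\psi_n(1)=1$ and the fact that $\ol\psi_n$ conjugates the rotation by $\ta$ with $P_n|_{\ol\Delta(P_n)}$, one gets $\ol\psi_n(e^{2\pi ik\ta})=P_n^k(1)\to P^k(1)=\ol\psi(e^{2\pi ik\ta})$, i.e.\ pointwise convergence on the dense orbit of $1$ in $\uc$; equicontinuity upgrades this to uniform convergence on $\uc$, and the Maximum Modulus Principle extends it to $\ol\disk$. That argument needs neither the identification of the limit domain nor any appeal to the surgery construction.

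The step you flag as the ``hard part'' is indeed a genuine gap as written: continuity or sequential compactness of the inverse surgery assignment $P\mapsto B$ is nowhere established, so the passage $B_n\to B^*$ with $\Sc(B^*)=P$ is unjustified. But the detour through the Blaschke model is unnecessary, because the normalization you are trying to recover is already built into the definition of $\ol\psi_{\Delta(P)}$: the paper normalizes every Riemann map so that $\ol\psi_{\Delta(P_n)}(1)=1$ (the boundary point $1\in\uc$ goes to the critical point $1\in\d\Delta(P_n)$). Hence the uniform subsequential limit satisfies $\psi^*(1)=1=\ol\psi(1)$; since $\psi^*=\ol\psi\circ R$ on $\ol\disk$ (two continuous maps agreeing on the dense set $\disk$) and $\ol\psi$ is injective on $\ol\disk$ ($\ol\Delta(P)$ being a closed Jordan disk), this forces $R(1)=1$ and so $R=\mathrm{id}$. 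Your proposed ``cleaner alternative'' is essentially this observation, except that there is nothing left to verify. One further caveat: your lower bound on $|\psi_n'(0)|$ does not follow from $\Delta(P_n)$ being a uniform quasidisk with $0$ inside and $1$ on the boundary (round disks already show that $0$ can be arbitrarily close to the boundary under these constraints); non-constancy of $\psi^*$ is better deduced from the Hausdorff convergence $\ol\Delta(P_n)\to\ol\Delta(P)$ of Theorem \ref{t:Za-qs}, which you invoke anyway in the next sentence.
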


\begin{proof}
  Suppose that a sequence $P_n\in\Cc^c_\la$ converges to $P\in\Cc^c_\la$.
We want to prove that $\ol\psi_n=\ol\psi_{\Delta(P_n)}$ converge uniformly to $\ol\psi=\ol\psi_{\Delta(P)}$.
First note that $\ol\psi_n(1)=\ol\psi(1)=1$ by the chosen normalization of the Riemann maps.
Now fix a positive integer $k$ and consider the point $z=P^k(1)\in\d\Delta(P)$.
Clearly, for fixed $k$ and all large $n$, the points $z_n=P_n^k(1)\in\d\Delta(P_n)$ are close to $z$.
Since $\ol\psi_n$ conjugate the rotation by angle $\ta$ with $P_n|_{\ol\Delta(P_n)}$,
 we necessarily have $\ol\psi_n(e^{2\pi i k\ta})=z_n$.
Similarly, $\ol\psi(e^{2\pi i k\ta})=z$.
Thus $\ol\psi_n\to\ol\psi$ point-wise on a dense subset of $\uc$.
By equicontinuity, it follows that $\ol\psi_n\to\ol\psi$ uniformly on $\uc$.
Finally, by the Maximum Modulus Principle, $\ol\psi_n\to\ol\psi$ on $\ol\disk$.
\end{proof}

\subsection{Polar coordinates and bubbles}
Let $U\subset\C$ be an open topological disk equipped with a distinguished \emph{center} $a\in U$,
 a certain \emph{radius} $r_U\in (0,\infty)$ and a \emph{base point} $b\in\d U$ accessible from $U$.
An open topological disk $U$ equipped with these data is called a \emph{framed domain}.
These data constitute a \emph{framing} of $U$.
For a framed domain $U$, consider the Riemann map $\psi_U:\disk(r_U)\to U$ such that $\psi(0)=a$ and
 $\lim_{u\to r_U}\psi_U(u)=b$ with $u$ converging to $r_U$ radially.
If $U$ is a Jordan disk, then $\psi_U$ extends to a homeomorphism $\ol\psi_U:\ol\disk(r_U)\to\ol U$.

\begin{dfn}[Polar coordinates, internal rays]
  \label{d:polar}
  Let $U$ be a framed domain with center $a\in U$, root point $b$, and radius $r_U$.
A point $z\in U$ has the form $\psi_U(\rho_z e^{2\pi i\ta_z})$ for some $\rho_z\in [0,r_U)$ and $\ta_z\in\R/\Z$.
The \emph{polar radius function} is by definition the function $z\mapsto \rho_z$ on $U$.
We always extend this function (keeping the notation) to $\ol U$ by setting $\rho_z=r_U$ for all $z\in\d U$.
The \emph{polar angle function} is by definition the function $z\mapsto \ta_z$ on $U\sm\{a\}$.
Note that this function is undefined when $\rho_z=0$.
If $U$ is a Jordan disk (and only in this case), we extend the polar angle function to $\ol U$ by continuity.
Then, for $z\in\d U$, the angle $\ta_z$ is determined by the relation $z=\ol\psi_U(r_U e^{2\pi i\ta_z})$.
Given any $\al\in\R/\Z$ define the \emph{internal ray} $R_U(\al)$ as the set $\{z\in U\mid \ta_z=\al\}$.
Say that $R_U(\al)$ \emph{lands} at a point $w\in\d U$ if $w$ is the only point in $\ol R_U(\al)\sm U$.
If $U$ is a Jordan disk, then every internal ray $R_U(\al)$ lands at the point $\ol\psi_U(r_U e^{2\pi i\al})$.
\end{dfn}

Assume now that either $f:\C\to\C$ is in $\Cc^c_\la$, or $f=Q_\la$.
Recall that $\la=e^{2\pi i\ta}$ is fixed.
Write $\Delta(f)$ for the Siegel disk of $f$, and $\psi_f:\disk\to\Delta(f)$ for the Riemann map normalized so that $\psi_f(0)=0$ and $\ol\psi_f(1)=1$; recall that $1$ is a critical point of $f$.

Define a \emph{pullback} of a connected set $A\subset\C$ under a polynomial $f$ as a connected component of $f^{-1}(A)$.
\emph{An iterated pullback} of $A$ under $f$ is by definition an $f^n$-pullback of $A$ for some $n>0$.

\begin{dfn}[Bubbles and polar coordinates on bubbles]\label{d:bub-coor}
\emph{Bubbles} \newline of $f$ are iterated pullbacks of $\Delta(f)$
(thus, bubbles are open Jordan disks).
Let $A$ be a bubble of $f$, and let $n$ be the smallest integer with $f^n(A)=\Delta(f)$.
Such $n$ is called the \emph{generation} of $A$ and denoted by $\mathrm{Gen}(A)$.
For $z\in A\sm f^{-n}(0)$,
set $\ta_z=\ta_{f^n(z)}-n\ta$ and $\rho_z=\rho_{f^n(z)}$.
Now, if $z$ has polar coordinates $\rho$ and $\al$, then $f(z)$ has polar coordinates $\rho$ and $\al+\ta$.
Equivalently, the complex coordinate $\rho e^{2\pi i\al}$ multiplies by $\la$ under the action of $f$.
Note that the polar radius extends as a continuous function on the union of the closures of all bubbles.
\end{dfn}

If a bubble $A$ is a homeomorphic iterated pullback of $\Delta(f)$, then we define a framing of $A$ as follows.
The \emph{center} $o_A$ of $A$ is defined as the only iterated preimage of $0$ in $A$.
If $f^n(A)=\Delta(f)$, then the base point of $\d A$ is defined as the point $b_A$ with $f^n(b_A)=f^n(1)$.
With this framing, internal rays of $A$ are defined.
By definition, an internal ray of $A$ consists of all points with a fixed value of polar angle.
Then in $A$ there is one internal ray of a given polar argument,
 and all internal rays connect the center of $A$ with appropriate points on $\d A$.
In particular, this picture holds for all bubbles in the quadratic case.

In the case of a cubic polynomial $P_c$ there might be a bubble $B$ which contains $c$ and, hence, maps forward two-to-one. In that case the picture with polar angle function, the center and the internal rays is a bit different.
More precisely, if $c$ maps into the center of $P_c(B)$, then the pullbacks of an internal ray are two
 internal rays connecting $c$ with appropriate points on $\d B$.
Now, suppose that $P_c(c)$ is not the center of $P_c(B)$.
Then the center $x=o_{P_c(B)}$ of $P_c(B)$ has two preimages $x'$, $x''\in B$.
Hence, with one exception, each internal ray of $P_c(B)$ pulls back to two internal rays,
 each connecting the appropriate pullback of the center of $P_c(B)$ with the appropriate point on $\d B$.
The exception is the internal ray $R=\ol{xy}$, of argument, say, $\al$, passing through $P_c(c)$;
 its pullback is a ``cross'' with endpoints at $x'$, $x''$ and at two preimages of $y$, and with vertex at $c$.

Given $(\rho, \al)$, there is unique point $z\in \Delta(f)$ and
a lot of other points with coordinates $(\rho, \al)$.
Any point with polar coordinates $(\rho,\al)$ maps to $f^n(z)$ under $f^n$, for some $n$ depending on the point.

The terms ``bubbles'' and ``bubble rays'' were introduced in the Thesis of J. Luo \cite{Luo95}
 (cf. \cite{AY09,Yan17} for a development of these ideas).
However, the difference with our setup is that bubbles in the sense of Luo are Fatou components
 that are eventually mapped to a superattracting rather than a Siegel domain.
Also, similar ideas are used in \cite{bbco10} where some
quadratic Cremer Julia sets were studied by approximating them with Siegel Julia sets with specific properties.

If $f^n:A\to \Delta(f)$ is a conformal isomorphism, then it also defines a framing of $A$ so that
the polar coordinates on $A$ just defined are consistent with this framing. By an \emph{oriented arc}
we mean an arc $I$ whose one endpoint is marked as \emph{initial} and the other
is marked as \emph{terminal}.

Let $A$ be a bubble of a cubic polynomial $P_c\in\Cc^c_\la$.
Evidently, a point $z\in \ol{A}$ can be connected with $0$ by an arc $I\subset K_P$
with initial point $0$ and terminal point $z$. While
such an arc is not unique, it is easy to see that for any bubble $A$ the intersection $I\cap \ol{A}$ is a subarc of $I$.
In what follows we will only consider arcs such that for all of the bubbles involved (except possibly for one)
the intersection $I\cap \ol{A}$ is contained in the union of $\{o_A\}$ and the closures of two internal rays of $\ol{A}$.
More precisely, let us now define \emph{legal arcs}.

\begin{dfn}[Legal arcs]\label{d:reg-arc}
Consider an oriented topological arc $I\subset K(f)$.
Suppose that $I^\circ$ is an open dense subset of $I$ such that the following holds:

\begin{enumerate}

\item the set $I\sm I^{\circ}$ can accumulate only at the terminal point of $I$;

\item each component of $I^\circ$ is contained in one bubble $A$ and coincides with
 a component of $(A\sm\bigcup_{n\ge 0} f^{-n}(0))\cap I$.

\item the polar angle function is defined and constant on each component of $I^\circ$;

\item $P^n(I)$ is not separated by $c$ for $n\ge 0$.

\end{enumerate}
Then $I$ is called a \emph{legal arc} (see Fig. \ref{fig:reg}).
Let $\al_0$, $\dots$, $\al_k$, $\dots$ be the values of the polar angle on $I^\circ$ taken in the order they appear on $I^\circ$.
A linear order of $\al_i$s is well defined since $I$ is oriented.
The finite or infinite sequence $(\al_0,\dots,\al_k,\dots)$ is called the \emph{(polar) multi-angle} of $I$.
\end{dfn}

\begin{figure}
  \centering
  \includegraphics[width=8cm]{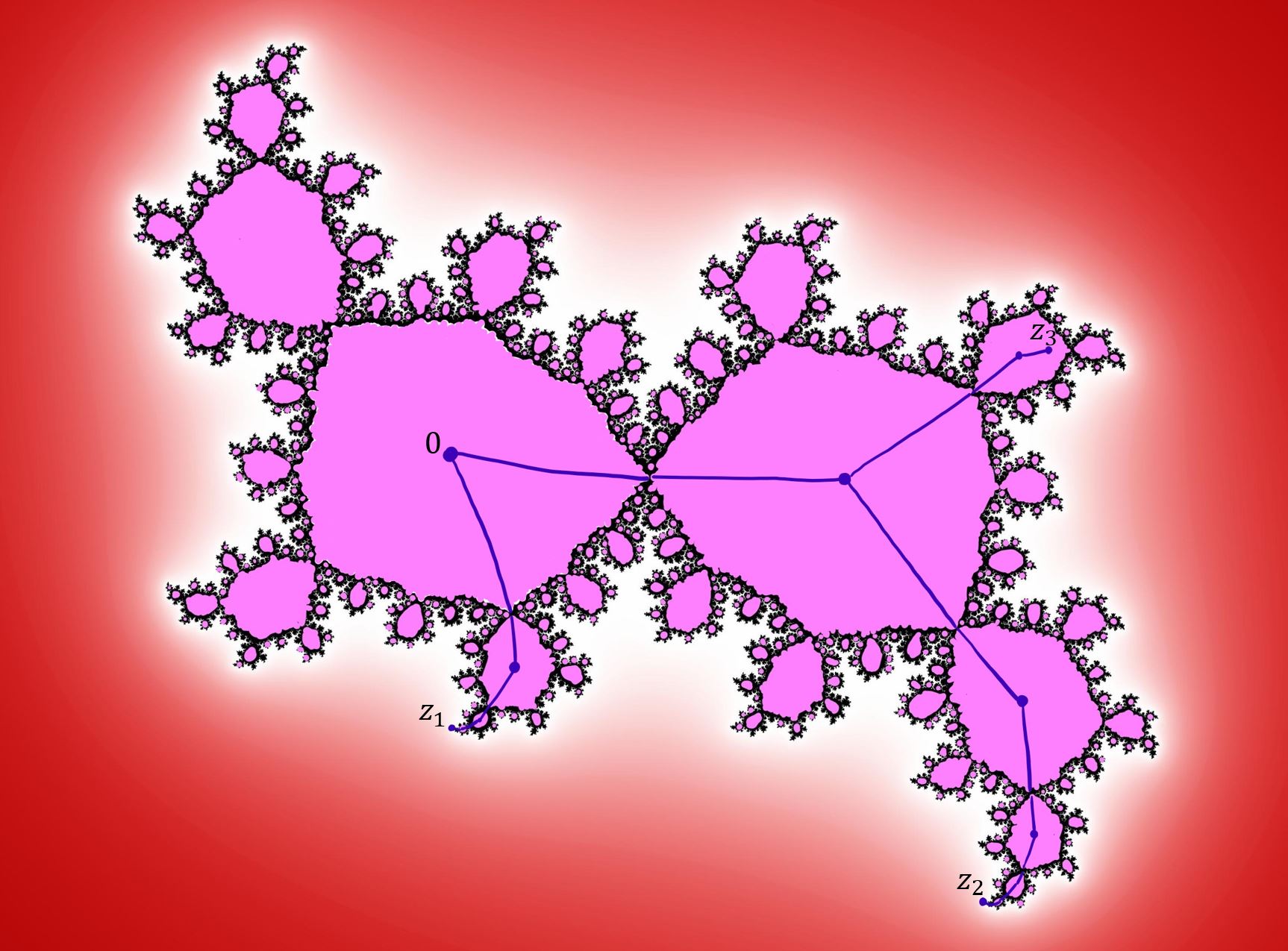}
  \caption{\small A schematic illustration of legal arcs in the dynamical plane of $Q_\la$.
  Three legal arcs are shown, connecting $0$ with points $z_1$, $z_2$, and $z_3$.
  Observe that the legal arcs from $0$ to $z_2$ and from $0$ to $z_3$ have an initial segment in common.
}\label{fig:reg}
\end{figure}

Typically, we deal with legal arcs with initial point $0$.
In the multi-angle $(\al_0,\dots,\al_k,\dots)$ of $I$ we will always have that $\al_0=\al_1, \al_2=\al_3$ etc
 because these pairs of angles correspond to pairs of internal rays of adjacent bubbles that
 eventually map onto the same internal ray of $\Delta(f)$ and, hence, have the same polar argument
(we make this more precise in Lemma \ref{l:polQ}).
Legal arcs for polynomials, under the name of regulated arcs,  were introduced by Douady and Hubbard in \cite{hubbdoua85};
 they play a key role in the definition of a \emph{Hubbard tree} for a post-critically finite polynomial.
We use legal arcs in an essentially different way.

If $z\in K(f)$ is such that there is a legal arc $I_z$ from $0$ to $z$,
then the \emph{polar multi-angle} (or just \emph{multi-angle}) of $z$ is defined
as the polar multi-angle of $I_z$. Note that if $I_z$ exists, then it is unique.

\begin{lem}
  \label{l:polQ}
For any $z\in K(Q)$, there is a legal arc $I_z$ from $0$ to $z$.
Let $(\al_0, \dots, \al_k,\dots)$ be the multi-angle of $z$.
Then each term $\al_i$, except possibly the last term, has the form $\al_i=-m_i\ta$.
Here $m_i$ are nonnegative integers such that $m_{2i+1}=m_{2i}$ and $m_{2i+2}>m_{2i+1}$.
Moreover, $z$ is uniquely determined by the multi-angle and (if the multi-angle is finite) by $\rho_z$.
\end{lem}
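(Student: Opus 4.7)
\emph{Construction of $I_z$.}
By Theorem \ref{t:Pe}, $K(Q)$ is locally connected and $\ol{\Delta(Q)}$ is a quasidisk; propagation of local connectivity ensures every bubble $A$ is a Jordan disk. Since the finite critical point $1$ and its image $\la/2$ both lie on $\d\Delta(Q)$, each $Q^n:A\to\Delta(Q)$ (with $n=\mathrm{Gen}(A)$) is a conformal isomorphism, equipping $A$ with its canonical framing. The closures of two distinct bubbles meet in at most one point, necessarily an iterated $Q$-preimage of $1$, and the bubble incidence graph is a topological tree rooted at $\Delta(Q)$---a standard feature of bounded-type Siegel Julia sets where $\Delta(Q)$ is the unique periodic Fatou component. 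Given $z\in K(Q)$, let $A_0=\Delta(Q),A_1,A_2,\ldots$ be the chain of bubbles along the unique tree-path toward $z$, with consecutive bubbles meeting at touching points $p_i\in\d A_i\cap\d A_{i+1}$. I build $I_z$ by traversing the internal ray from $0$ to $p_0$ inside $A_0$ and, inside each $A_i$ with $i\ge 1$, the two consecutive internal rays from $p_{i-1}$ through $o_{A_i}$ to $p_i$. If the chain is finite and terminates in some $A_N$, the final piece is a sub-arc of an internal ray of $A_N$ ending at $z$; if infinite, the arc converges to $z$ because the diameters of the nested bubble closures tend to zero, a standard normality argument in the bounded-type regime.

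\emph{Form of the $\al_i$.}
Let $\Delta_1$ denote the unique first-generation bubble, $\Delta_1=Q^{-1}(\Delta(Q))\sm\Delta(Q)$. Any touching point $p$ between two bubbles is an iterated preimage of $1$: the only pinch of $\d\Delta(Q)\cup\d\Delta_1$ is at $1$, and all further pinches lift from it via $Q^{-m}$ since $Q$ is biholomorphic off $\{1\}$. Let $m(p)$ be minimal with $Q^{m(p)}(p)=1$. For any bubble $A$ of generation $n$ with $p\in\d A$, the point $Q^n(p)\in\d\Delta(Q)$ sits at polar angle $(n-m(p))\ta$, because $Q$ acts on $\d\Delta(Q)$ as rigid rotation by $\ta$ fixing the polar-angle-$0$ point $1$. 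Thus the polar angle of $p$ in $A$ equals $(n-m(p))\ta-n\ta=-m(p)\ta$, independent of $A$. Specialized to $p=p_i$, this gives $\al_{2i+1}=\al_{2i}=-m_i\ta$ with $m_i=m(p_i)$, which is both the form $\al_i=-m_i\ta$ and the identity $m_{2i+1}=m_{2i}$.

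\emph{Strict increase $m_{2i+2}>m_{2i+1}$.}
At any touching point $p$ of depth $m$, the pinch is the $Q^{-m}$-lift of the pinch at $1$, so the two bubbles meeting at $p$ are a component of $Q^{-m}(\Delta(Q))$ (generation at most $m$) and a component of $Q^{-m}(\Delta_1)$ (generation exactly $m+1$, as its forward orbit first enters $\Delta(Q)$ at step $m+1$). Because the chain travels strictly away from the root, $A_{i+1}$ must be the $\Delta_1$-pullback at $p_i$ and $A_i$ the $\Delta(Q)$-pullback, giving $\mathrm{Gen}(A_{i+1})=m_i+1$. At the successor touching point $p_{i+1}\in\d A_{i+1}$, the same bubble $A_{i+1}$ now lies on the root side, hence is the $\Delta(Q)$-pullback at $p_{i+1}$, which forces $\mathrm{Gen}(A_{i+1})\le m_{i+1}$. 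Combining yields $m_{i+1}\ge m_i+1$, i.e., $m_{2i+2}>m_{2i+1}$.

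\emph{Uniqueness and main obstacle.}
The multi-angle reconstructs the chain inductively: given $A_i$ and $m_i$, the touching point $p_i\in\d A_i$ is pinned down as the unique preimage of $1$ on $\d A_i$ of polar angle $-m_i\ta$, and $A_{i+1}$ is the unique bubble of generation $m_i+1$ meeting $A_i$ at $p_i$ on the side opposite the root. If the multi-angle is finite, $z$ lies on the last internal ray and is pinned by $\rho_z$; if infinite, $z$ is the Hausdorff limit of the $\ol{A_i}$. The main obstacle is the ``which side'' assertion of the previous step: at each touching point, the chain's direction must canonically select between the $\Delta(Q)$- and $\Delta_1$-pullbacks. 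This rests on orienting the bubble tree away from the root, a purely combinatorial fact, but one that requires care because generations of bubbles need not increase monotonically along the chain, so the argument must proceed via the local structure of pinches rather than by a naive generation count.
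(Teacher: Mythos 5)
The student takes a genuinely different route from the paper. The paper's proof is a direct induction on $\mathrm{Gen}(A)$: it locates the first intersection $x$ of $I_z$ with $\d\Delta(Q)$, applies $Q^{m_0+1}$ (where $Q^{m_0}(x)=1$) to strip off the first two angles and shift the rest by $(m_0+1)\ta$, and the inequalities on $m_i$ drop out automatically from the induction. The student instead appeals to the bubble incidence tree and to a local analysis of pinch points: each touching point $p$ with $Q^{m(p)}(p)=1$ is a lift of the pinch at $1$, so the two bubbles meeting there are a $Q^{-m(p)}$-component of $\Delta(Q)$ (generation $\le m(p)$) and one of $\Delta_1$ (generation exactly $m(p)+1$). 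This is a nice structural picture, but it rests on the tree structure as a cited background fact rather than deriving what is needed along the way.

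There is, however, a genuine gap, which the student flags but does not close: the ``which side'' assertion that at each touching point $p_i$ the bubble $A_i$ (closer to $0$ along the chain) is the $\Delta(Q)$-pullback and $A_{i+1}$ is the $\Delta_1$-pullback. You correctly observe that a naive generation count cannot establish this, and that one needs the local structure of pinches, but you do not give the argument, and without it the inequality $m_{2i+2}>m_{2i+1}$ is not proven. The gap can be closed as follows: if $B$ is the $\Delta_1$-pullback at $p$, then $\mathrm{Gen}(B)=m(p)+1$ and $Q^{\mathrm{Gen}(B)-1}(p)=Q^{m(p)}(p)=1$, so $p$ is precisely the root point $r(B)$; on the other hand, the $\Delta(Q)$-pullback $A$ has $\mathrm{Gen}(A)\le m(p)$, so $Q^{\mathrm{Gen}(A)-1}(p)$ is a strict preimage of $1$ and $p\ne r(A)$. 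Since only these two bubbles meet at $p$ and a legal arc enters each off-critical bubble through its root point, the chain must pass from $A$ into $B$ at $p$, which is exactly the assertion you needed. You should also be aware that this argument leans on the claim that a legal arc enters each bubble through its root point — a fact the paper's inductive proof effectively establishes along the way, so invoking it freely in your framework requires the tree structure to already be in place as a prerequisite rather than a by-product.
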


A sequence $(\al_0,\dots,\al_k,\dots)$ with the properties listed in Lemma \ref{l:polQ}
 is called a \emph{legal sequence of angles}.
We also define a \emph{legal angle} as an angle of the form $-m\ta$,
where $m$ is a nonnegative integer.

\begin{proof}
  Suppose first that $z$ is in the closure of a bubble $A$ of $Q$.
The argument will use induction on $\mathrm{Gen}(A)$.
If $A=\Delta(Q)$, then $I_z$ is a segment or the closure of some internal ray of $\Delta(Q)$,
 the multi-angle of $z$ is $(\ta_z)$, and $z$ is determined by $\ta_z$ and $\rho_z$.
Thus we now assume that $A\ne \Delta(Q)$.
Then $I_z$ intersects $\d \Delta(Q)$ at a point $x$ that is eventually mapped to $1$.
Let $m_0$ be the non-negative integer with $Q^{m_0}(x)=1$, then $\al_0=-m_0\ta$.
The arc $Q^{m_0}(I_z)=I_{Q^{m_0}(z)}$ contains $R_{\Delta(Q)}(0)$,
 and $Q^{m_0+1}(I_z)=I_{Q^{m_0+1}(z)}\cup\ol R_{\Delta(Q)}(\ta)$.
The multi-angle of $Q^{m_0}(z)$ starts with $0$, $0$ since
 both initial components of $I^\circ_{Q^{m_0}(z)}$ map onto $R_{\Delta(Q)}(\ta)$.
We may assume by induction that the multi-angle of $Q^{m_0+1}(z)$ is $(\tilde\al_2,\dots,\tilde\al_k)$,
 where $\tilde\al_i=-\tilde m_i\ta$ for $2\le i<k$, and $\tilde m_i$ satisfy the desired properties.
In this case the multi-angle of $z$ is $(\al_0,\dots,\al_k)$, where $\al_0=\al_1=-m_0\ta$ and
 $\al_i=\tilde\al_i-(m_0+1)\ta=-m_i\ta$ with $m_i=\tilde m_i+m_0+1$ for $i\ge 2$ and $2\le i<k$.
Thus, we proved that every point from the closure of every bubble of $Q$ has a multi-angle.
Moreover, the latter is a legal sequence of angles.

Suppose now that $z\in K(Q)$ is not in the closure of a bubble.
Then there is a sequence of pairwise different bubbles $A_0$, $\dots$, $A_k$, $\dots$ such that $A_k\to \{z\}$ in the Hausdorff metric.
Moreover, we can assume that $A_0=\Delta(Q)$ and $A_i\cap A_{i+1}=\{z_i\}$, where $z_i$ is eventually mapped to $1$.
Clearly, $I_{z_i}$ is an initial segment of $I_{z_j}$ with $j>i$.
Set $I_z$ to be the closure of the union of all $I_{z_i}$; then $I_z$ is a
legal arc from $0$ to $z$.
It follows that there is an infinite legal sequence of angles such that the multi-angle $z_i$ is an initial segment of this sequence.
This infinite legal sequence is then the multi-angle of $z$.
Thus, all points of $K(Q)$ have well-defined multi-angles.

Given a legal sequence of angles $(\al_0,\dots,\al_k,\dots)$,
 there is a unique sequence of bubbles $A_0$, $\dots$, $A_i$, $\dots$
 such that the point $z_i\in A_i\cap A_{i+1}$ has multi-angle $(\al_0,\dots,\al_{2i})$.
If the sequence $(\al_i)$ is infinite, then the corresponding sequence of bubbles converges to a unique point $z$
 determined by the infinite multi-angle $(\al_i)$.
If the sequence $(\al_i)$ is finite, then it defines a unique last bubble $A_n$ in the corresponding sequence of bubbles
 and an internal ray $R=R_{A_n}(\al_{2n})$ or $R_{A_n}(\al_{2n-1})$ in $A_n$.
All points of $R$ together with the landing point of $R$ (and no other points) have the given multi-angle.
These points are then determined by the legal sequence $(\al_0,\dots,\al_k,\dots)$ and the polar radius.
\end{proof}

Sequences of bubbles defined in the proof of Lemma \ref{l:polQ} for points $z\in K(Q)$, are called \emph{bubble rays}.

\subsection{Bubble rays and bubble chains}
\label{ss:bubblerays}
Take $P=P_c\in\Cc^c_\la$ and set $Q=Q_\la$.
Define $Y(P)$ as the set of all points $z\in K(P)$ for which there is a legal arc $I_z$ from $0$ to $z$.
Note that, by definition, $Y(P)$ includes $\ol\Delta(P)$ and is forward invariant: $P(Y(P))\subset Y(P)$.
However, in general, $Y(P)$ does not have to be closed.

Every $z\in Y(P)$ has a multi-angle and, if the latter is finite, the polar radius $\rho_z$.
Moreover, it is not hard to see that the multi-angle of $z$ is a legal sequence of angles.
Set $\rho_z=\infty$ if the multi-angle of $z$ is infinite.
Similarly, we set $\rho_w=\infty$ for points $w\in K(Q)$ not on the boundary of a bubble of $Q$.
The map $\eta_P:Y(P)\to K(Q)$ takes $z\in Y(P)$ to a unique point $w=\eta_P(z)$ with
 the same multi-angle and polar radius.
By definition of $Y(P)$ and properties of multi-angles and polar radii, $\eta_P\circ P=Q\circ \eta_P$ on $Y(P)$.

Let $A$ be a bubble of generation $n$.
If $P^n:A\to\Delta(P)$ is one-to-one, then $A$ is called \emph{off-critical}.
If $c\in A$, then $A$ is called \emph{critical}.
Finally, if $A$ is a pullback of a critical bubble, it is said to be \emph{precritical}.
For any bubble $A$, one can define its \emph{root point} $r(A)$.
When $A$ is off-critical, the root point is uniquely defined by the formula $P^{n-1}(r(A))=1$.
When $A$ is critical or precritical, there may be legal paths from $0$ to some points in $A$.
All these paths intersect the boundary of $A$ at the same point; this point is by definition the root point $r(A)$.
There are two points $z'$, $z''\in \d A$ such that $P^{n-1}(z')=P^{n-1}(z'')=1$, and the point $r(A)$ is one of them.

\begin{dfn}[Legal bubbles and bubble correspondence]
\label{d:leg}
A bubble $A$ of $P$ with $A\cap Y(P)\ne\0$ is called \emph{legal}.
Thus, $A$ is legal if and only if $r(A)\in Y(P)$, and $P^i(r(A))\ne c$ for $i<\mathrm{Gen}(A)$.
If a legal bubble $A$ is off-critical, then $\ol{A}\subset Y(P)$.
Clearly, $\eta_P(A\cap Y(P))$ lies in a unique bubble $A_Q$ of $Q$.
Say that $A$ and $A_Q$ \emph{correspond} to each other.
This correspondence between some bubbles of $P$ and 
 bubbles of $Q$ is called the \emph{bubble correspondence}.
By definition, if $A$ is a legal bubble of $P$, then $P(A)$ is also a legal bubble of $P$.
Moreover, if $A$ corresponds to $A_Q$, then $P(A)$ corresponds to $Q(A_Q)$.
\end{dfn}

Define $(\R/\Z)^\star$ as the set of nonempty finite sequences of angles and
 $(\R/\Z)^\mathbb{N}$ as the set of infinite sequences of angles.
The map
$$
\Pi:(\R/\Z)^\star\sm\{(0),(0,0)\}\to (\R/\Z)^\star
$$
acts as follows.
Take $\vec\al=(\al_0,\al_1,\al_2,\dots)\in (\R/\Z)^\star$.
If $\al_0=\al_1=0$, then $\Pi(\vec\al)=(\al_2+\ta,\dots)$,
 otherwise $\Pi(\vec\al)=(\al_0+\ta,\al_1+\ta,\al_2+\ta,\dots)$.
Then any legal sequence in $(\R/\Z)^\star$ of length $>1$ is eventually mapped to $(0)$ or $(0,0)$ under $\Pi$.
On $(0)$ and $(0,0)$, the map $\Pi$ is undefined.
Clearly, $\Pi$ can be also defined as a self-map of $(\R/\Z)^\mathbb{N}$, by the same rule.

Let $A$ be a legal bubble of $P$ of generation $n$.
If $A$ is off-critical, recall that the center of $A$ is by definition the preimage of $0$ under $P^n:A\to\Delta(P)$.
The \emph{incoming} radius of any off-critical bubble $A$ is its radius $R(A)$ connecting $r(A)$ to its center.
Now, if $c\in A$ then $P|_A$ is two-to-one.
To define the center of $A$, recall the earlier analysis of pullbacks of radii into critical bubbles.
It follows from that analysis that there are two cases depending on the mutual location of $R(P(A))$ and $P(c)$.
If $R(P(A))$ does not contain $P(c)$, or $P(c)$ is the center of $P(A)$, then
there is a unique pullback of $R(P(A))$
that connects $r(A)$ with a preimage of the center of $P(A)$, and this preimage
of the center of $P(A)$ is said to be the \emph{center} of $A$. The remaining case is
when $R(P(A))$ contains $P(c)$ but $P(c)$ is not the center of $P(A)$. In that case
the center of $A$ is not defined.

Finally, let $A$ be precritical; then $P|_A$ is one-to-one.
If the center of $P(A)$ is defined, set the center of $A$ to be the pullback of the
center of $P(A)$ into $A$; if the center of $P(A)$ is not defined, then the center of $A$ is not defined either.
If the center of $A$ is defined, it is denoted $o_A$.

The \emph{multi-angle of $A$} is defined as the multi-angle of the root point of $A$.
If $A$ has multi-angle $\vec\al$, then $P(A)$ has multi-angle $\Pi(\vec\al)$.
We can now describe multi-angles of legal bubbles.

\begin{prop}
  \label{p:ma-bub}
  Let $\vec\al$ be a finite legal sequence of angles of odd length starting with $-m\ta$ for a nonnegative $m\in\Z$.
Then $\vec\al$ is a multi-angle of some legal bubble if and only if
 no eventual $\Pi$-image of an initial subsequence of $\vec\al$ is the multi-angle of $c$.
\end{prop}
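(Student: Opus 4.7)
The plan is induction on the length $\ell = 2k+1$ of $\vec\al$, using that $\Pi$ is conjugate, via the multi-angle, to the action of $P$ on root points of legal bubbles: if $A$ is a legal off-critical bubble of generation $\ge 1$, then $P(A)$ is a legal bubble whose multi-angle is $\Pi$ applied to the multi-angle of $A$. Consequently, for each initial subsequence $\vec\be$ of $\vec\al$ of odd length, $\vec\be$ is the multi-angle of the root point of some bubble $B$ lying in the bubble chain $\Delta(P) = A_0, A_1, \ldots, A_M = A$, and $\Pi^j(\vec\be)$ records the multi-angle of $P^j(r(B))$ for as many $j$ as $\Pi$ remains defined, i.e., until the orbit reaches $1$.

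For the base case $\ell = 1$ with $\vec\al = (-m\ta)$: the sought legal bubble $A$, if it exists, is attached to $\Delta(P)$ at the point $y$ of polar angle $-m\ta$, and is obtained as an iterated pullback of the generation-$1$ bubble at $1$ along the backward rotation orbit $1, P^{-1}(1) \cap \d\Delta(P), \ldots, y$ on $\d\Delta(P)$. Each pullback step produces a new off-critical attached bubble precisely when the intermediate point (at polar angle $-j\ta$ for some $0 \le j \le m$) is not the critical point $c$; if $c$ sits at one of these polar angles, the local degree of $P$ forces the pullback to be critical and the resulting bubble to fail to be legal. Equivalently, $c$ is not at polar angle $-j\ta$ on $\d\Delta(P)$ for any $0 \le j \le m$, which is exactly the condition that none of the $\Pi$-iterates $(-(m-j)\ta) = \Pi^j(\vec\al)$ equals the multi-angle of $c$.

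For the inductive step ($\ell \ge 3$), write $\al_0 = -m_0\ta$ and apply $\Pi$ exactly $m_0 + 1$ times: the first $m_0$ applications shift every entry by $\ta$, and the last drops the leading pair $(0,0)$. The result is a legal sequence $\vec\be$ of length $\ell - 2$, and by the inductive hypothesis $\vec\be$ is the multi-angle of some legal bubble $B$ iff its associated condition holds. In the forward direction ($\Rightarrow$), if $A$ is legal with multi-angle $\vec\al$, then every bubble in the chain from $\Delta(P)$ to $A$ is legal, because the restriction of a legal arc to an initial subarc is again a legal arc; reading off $\Pi$-iterates of initial subsequences via the correspondence above and using the legality condition $P^j(r(B))\ne c$ for $j < \mathrm{Gen}(B)$ on each chain bubble $B$ (together with $(0)\ne$ multi-angle of $c$ since $c\ne 1$) yields the condition on $\vec\al$. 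In the backward direction ($\Leftarrow$), one pulls back $B$ inductively along the $m_0 + 1$ further pullback steps dictated by the initial segment of $\vec\al$ to produce $A$; each step requires a specific attachment point not to be $c$, and these extra ``non-$c$'' requirements are exactly the $\Pi$-iterates of those initial subsequences of $\vec\al$ that are not already initial subsequences of $\vec\be$.

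The main obstacle is the combinatorial bookkeeping required to match up, on one side, every initial subsequence of $\vec\al$ with the root point of a specific bubble in the chain to $A$, and on the other, every pullback obstruction in the inductive construction with a specific $\Pi^j$-image equaling the multi-angle of $c$. This is delicate because the relationship among generation, chain length, and multi-angle length is not one-to-one: a bubble of high generation that is attached to $\Delta(P)$ via a short chain has a short multi-angle, so the induction must carefully accommodate the fact that long $\Pi$-chains (with many intermediate ``shift'' steps) can be compressed into a small number of initial subsequences of $\vec\al$.
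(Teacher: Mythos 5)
The paper proves this in two lines, with no induction: by Definition~\ref{d:leg}, a bubble $A$ is legal iff no image $P^i(I_{r(A)})$ with $0\le i<\mathrm{Gen}(A)$ contains $c$, and $c\in P^i(I_{r(A)})$ iff the multi-angle of $c$ equals $\Pi^i$ of an initial subsequence of $\vec\al$. Your inductive scheme --- reducing the length of $\vec\al$ by two via $\Pi^{m_0+1}$ and pulling back --- is a genuinely different route, and the skeleton is sound, but both directions of the inductive step have a matching gap.

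In the forward direction you read off, from the chain bubbles $B$, only the conditions $P^j(r(B))\ne c$ for $j<\mathrm{Gen}(B)$. That is one half of legality. The other half is $r(B)\in Y(P)$, which by Definition~\ref{d:reg-arc}(4) demands that no $P^n(I_{r(B)})$ be separated by $c$ in its interior. This second half is what rules out $c$ lying inside the legal arc --- for instance $c$ equal to the center $o_{A_j}$ of a chain bubble, or $c$ on an incoming internal ray at radius $<1$. In those situations the multi-angle of $c$ has even length, and it is exactly a $\Pi^i$-image of an even-length initial subsequence of $\vec\al$; your stated ``correspondence'' pairs only odd-length initial subsequences with root points of chain bubbles, so your argument cannot see these obstructions. (The same issue arises even for odd-length multi-angles of $c$ at radius $<1$: then $c$ sits on an outgoing ray, $P^j(r(B))\ne c$ still holds, yet $B$ is illegal.) The backward direction has the symmetric gap: verifying that each ``attachment point'' produced in the pullback is not $c$ does not certify legality; you must also check that $c$ never separates the interior of any $P^n(I_{r(A_j)})$, and these extra requirements are encoded by exactly the even-length and radius-$<1$ multi-angles the forward argument misses. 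Once the full legality condition (both halves) is inserted at every step the argument does close up --- but at that point the induction is doing no real work, since you are simply unwinding Definition~\ref{d:leg}, which is the entire content of the paper's two-sentence proof.
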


The assumption that $\vec\al$ starts with $-m\ta$ is essential if $\vec\al$ has length 1
(otherwise it follows from the definition of a legal sequence).

\begin{proof}
By Definition \ref{d:leg}, a bubble $A$ is legal if and only if no image $P^i(I_{r(A)})$ with $0\le i<\mathrm{Gen}(A)$
 contains $c$.
On the other hand, $c\in P^i(I_{r(A)})$ if and only if the $\Pi^i$-image of an initial subsequence
of $\vec\al$ is the multi-angle of $c$.
\end{proof}

The concept of a bubble ray was used in the proof of Lemma \ref{l:polQ}.

\begin{dfn}[Bubble rays, bubble chains, core curves]
\label{d:bubray}
Take a legal bubble $A$ of $P$ and a point $z\in \ol{A}\cap Y(P), z\ne r(A)$ .
A legal arc $I_z$ from $0$ to $z$ passes through bubbles $A_0=\Delta(P)$, $\dots$, $A_n=A$
in this order and through no other bubbles.
The sequence $A_0$, $\dots$, $A_n$ is called a \emph{bubble chain (to $z$)}.
A \emph{bubble ray} is a sequence $\Ac=(A_0, A_1, \dots)$ of legal bubbles $A_i$ such that
$A_0,$ $\dots,$ $A_n$ is a bubble chain, for every finite $n$. Set $\bigcup \Ac=\bigcup_{i\ge 0}A_i$.
Bubble chains and bubble rays for $Q=Q_\la$ are defined similarly.
The \emph{core curve} of $\Ac$ is defined as the union of $I_{z_i}$, where $z_i\in \ol A_i\cap\ol A_{i+1}$.
(Note that $I_{z_i}\subset I_{z_j}$ for $i<j$.)
\end{dfn}

\begin{dfn}[Landing bubble rays]
  Consider a bubble ray $\Ac=(A_i)$ for $P$.
We say that $\Ac$ \emph{lands} at a point $z$ if $\{z\}$ is the upper limit of the sequence $A_i$, that is
$$
\{z\}=\bigcap_i \ol{A_i\cup A_{i+1}\cup \dots}.
$$
In general, the right hand side is denoted by $\lim\Ac$ and is called the \emph{limit set} of $\Ac$.
If $\Ac$ lands at $z$, then we also say that $\Ac$ is a bubble ray \emph{to $z$}.
Similar definitions apply to the dynamical plane of $Q$.
\end{dfn}

Consider a bubble ray $\Ac=(A_i)$ for $P$.
If $P(A_1)\ne A_0$, then we define $P(\Ac)$ as $(A_0$, $P(A_1)$, $P(A_2)$, $\dots)$.
Otherwise, $P(A_1)=A_0$, and we define $P(\Ac)$ as $(A_0$, $P(A_2)$, $P(A_3)$, $\dots)$.
If $P^m\left(\bigcup\Ac\right)=\bigcup\Ac$, then $\Ac$ is said to be \emph{periodic of period $m$}.
Let $I$ be the core curve of $\Ac$.
If $m$ is the minimal period of $\Ac$ under $P$, then $P^m(I)=I$.
However, $P^m:I\to I$ is not one-to-one; $I$ is folded at critical points of $P^m$.

\begin{lem}
  \label{l:ubub}
For $z$ in the dynamical plane of $P$, there is at most one bubble chain or a bubble ray to $z$.
\end{lem}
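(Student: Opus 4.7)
The plan is to argue by contradiction: suppose there are two distinct bubble chains or bubble rays $\Ac = (A_i)$ and $\Ac' = (A_i')$ both terminating at or landing at $z$. Since both begin with $\Delta(P)$, there is a largest index $k$ for which $A_i = A_i'$ when $i \le k$, so $A_{k+1} \ne A_{k+1}'$. The two associated legal arcs $I_z$ and $I_z'$ from $0$ to $z$ agree through $A_0, \dots, A_{k-1}$, enter $A_k$ at a common point, pass through the center $o_{A_k}$ along internal radii (forced by Definition \ref{d:reg-arc}), and exit $A_k$ at the respective attachment points $z_k = \d A_k \cap \d A_{k+1}$ and $z_k' = \d A_k \cap \d A_{k+1}'$.

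The first step is to establish $z_k \ne z_k'$. The root of a legal bubble attached to $A_k$ is an iterated preimage of $1$ lying on $\d A_k$ whose polar angle is dictated by the multi-angle of the bubble; distinct legal bubbles carry distinct multi-angles by Proposition \ref{p:ma-bub}, so in the generic case this forces distinct attachment points. In the exceptional configuration where two legal children of $A_k$ would share a root, Proposition \ref{p:ma-bub} forces a $\Pi$-iterate of the common multi-angle to coincide with the multi-angle of $c$, which is ruled out by the legality of $A_{k+1}$ and $A_{k+1}'$. Hence $z_k \ne z_k'$, and the arcs $I_z$, $I_z'$ genuinely diverge at $o_{A_k}$.

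Once divergence is in place, the concatenation of the $I_z$-tail from $o_{A_k}$ to $z$ with the reversed $I_z'$-tail from $z$ to $o_{A_k}$ is a closed curve $\gamma$ in $K(P)$: a Jordan curve when both sequences are finite chains, or a limit of Jordan curves obtained by truncating at matching depths when either is a ray landing at $z$ (using that the bubbles of a ray shrink to the landing point by definition). Because $K(P)$ is full and connected, $\hat\C \sm K(P)$ is the single unbounded component, and so $\gamma$ bounds a Jordan domain $U \subset K(P)$. The curve $\gamma$ meets $A_k$ only along the two internal radii to $z_k$ and $z_k'$, so $U \cap A_k$ is a nonempty open sector; since the bounded Fatou components of $P$ are exactly the bubbles and $J(P)$ has empty interior, $U$ must contain some whole bubble $B$ disjoint from $\Ac$ and $\Ac'$. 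Any legal arc from $0$ to a point of $B$ would then be forced to cross $\gamma$, which is impossible because $\gamma$ consists of legal arc segments through bubbles that meet other bubbles in at most a single boundary point. The main obstacle I expect is the critical-case analysis in the first step: when $c \in A_k$, the two-to-one behavior of $P|_{A_k}$ and the \emph{cross} configuration of internal-radius pullbacks described earlier in this section permit delicate degeneracies that must be ruled out by combining Proposition \ref{p:ma-bub} with the detailed pullback analysis for critical bubbles.
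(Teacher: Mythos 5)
Your construction of the bounded region enclosed by the two diverging arcs is exactly the paper's starting point, and your use of fullness of $K(P)$ to place that region inside $K(P)$ is correct. But the endgame has a genuine gap. Knowing only that $U\subset K(P)$ does not put $U$ inside a single Fatou component: $U$ may a priori contain Julia points, which is why you are forced into the claim that $U$ swallows a whole bubble $B$ and that this is absurd. Neither half of that claim is justified. First, ``the bounded Fatou components of $P$ are exactly the bubbles'' is not established anywhere (for $P\in\Cc^c_\la$ the free critical point $c$ could, for instance, lie in an attracting basin unrelated to the Siegel disk), and even granting that $U$ meets the Fatou set, concluding that it contains an \emph{entire} bubble disjoint from the two chains requires density of bubbles near Julia points of $U$ together with shrinking of their diameters --- nontrivial facts you do not supply. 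Second, and more seriously, a bubble $B\subset U$ is not by itself a contradiction: your punchline ``any legal arc from $0$ to a point of $B$ would be forced to cross $\gamma$'' presupposes that such a legal arc exists, which is false unless $B$ is a legal bubble; for a non-legal bubble inside $U$ no contradiction arises from this reasoning.

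The missing idea is dynamical rather than combinatorial: since $\partial U\subset I'\cup I''\subset K(P)$, the iterates $P^n$ are uniformly bounded on $\partial U$, hence (Maximum Modulus Principle) uniformly bounded on $U$, hence normal there; so $U$ contains \emph{no} Julia points and lies in a single Fatou component. That component must then coincide with each of the two distinct bubbles whose interiors the curve $\gamma$ traverses after the divergence point --- a contradiction. This is the paper's one-line argument, and it also renders your first step (distinguishing the attachment points $z_k$ and $z_k'$, with the delicate critical-bubble case analysis) unnecessary: even if the two chains branch at a common boundary point, the union $I'\cup I''$ still bounds a nonempty bounded open set, which is all one needs.
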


\begin{proof}
Suppose that $\Ac'$ and $\Ac''$ are different bubble rays or bubble chains to $z$.
If $\Ac'$ is a bubble ray, then set $I'$ to be its core curve; otherwise set $I'=I_z$.
The arc $I''$ is defined similarly, with $\Ac'$ replaced by $\Ac''$.
If $\Ac'\ne\Ac''$, then there is a bounded open set $U$ in $\C$ whose boundary is contained in $I'\cup I''$.
By the Maximum Modulus Principle, the sequence $P^n$ is bounded on $U$, hence equicontinuous.
We conclude that $U$ is in the Fatou set, that is, $U$ is in a single bubble, a contradiction.
\end{proof}

\subsection{Landing of bubble rays}
\label{ss:landbubrays}
Recall that $P\in \Cc^c_\la$.
If $\Ac=(A_n)$ is a periodic bubble ray for $P$ of minimal period $m$, then,
clearly, $P^m$ takes several first bubbles $A_0$, $\dots$, $A_{k}$ to $A_0$, and $A_{k+1}$ to $A_1$.
In this case we say that \emph{$P^m$ shifts bubbles of $\Ac$ by $k$}.
We always have $k\ge 1$.

The main result of this subsection is Theorem \ref{t:land}.
Theorem \ref{t:752} will be used in the proof of Theorem \ref{t:land}.
If $X\subset\C$ is a continuum, $\thu(X)$ stands for its \emph{topological hull}, i.e., the union of $X$ and
 all bounded complementary components of $X$.

\begin{thm}[Theorem 7.5.2 \cite{bfmot13}]\label{t:752}
Let $f$ be a polynomial, let $K(f)$ be connected, and let $X\subset J(f)$ be an invariant continuum.
Suppose that $X$ is not a singleton.
Then $\thu(X)$ contains a rotational fixed point or an invariant parabolic domain.
\end{thm}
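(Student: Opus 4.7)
The plan is to produce a fixed point of $f$ on the outer boundary of $\thu(X)$ and then classify its dynamical type. First I would check that $f$ restricts to a self-map of $\thu(X)$: since $K(f)$ is connected (hence full) and $X\subset K(f)$, $\thu(X)\subset K(f)$; and for any bounded component $V$ of $\C\sm X$, $\d V\subset X$ implies that $f(\ol V)$ is a bounded continuum with boundary in $f(X)=X$, hence contained in $\thu(X)$. So $f(\thu(X))\subset\thu(X)$.

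Next, I would analyze the dynamics on the prime end circle of the unbounded complementary component $U$ of $\thu(X)$. The polynomial $f$ induces a continuous degree-$d'$ covering $\hat f$ (with $d'\ge 1$) on this circle; any such circle covering has a fixed prime end $e$, whose impression $\imp(e)\subset\d\thu(X)\subset X\subset J(f)$ is a continuum mapped into itself by $f$. A plane fixed-point theorem for non-separating plane continua (Bell, extending Cartwright--Littlewood to continuous maps that extend to the whole plane), applied to $f|_{\imp(e)}$, yields a fixed point $p\in X\cap J(f)$. Being in the Julia set, $p$ is Siegel, Cremer, parabolic, or repelling.

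Siegel and Cremer fixed points are rotational by definition. In the parabolic case, either $f'(p)$ is a nontrivial root of unity (making $p$ rotational), or $f'(p)=1$ and the attracting petal at $p$ pointing into $U$ through the fixed prime end $e$ is invariant, giving the required parabolic domain.

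The main obstacle is the repelling case: I would need to show that $p$ is rotational, i.e., that $f$ permutes nontrivially the finitely many external rays landing at $p$. The strategy is to exploit the structure of $\hat f$ near the fixed prime end $e$ together with local linearization at $p$: if $p$ were not rotational, all external rays landing at $p$ would be fixed by $f$, and the non-degeneracy of $X$ would force the existence of a non-trivial arc of $X$ in some sector cut off by these rays; linearizing $f$ at the repelling fixed point $p$ and iterating, this arc would escape any compact neighborhood of $p$, contradicting $f(X)=X\subset K(f)$. This combinatorial argument, ruling out the non-rotational repelling case, is the delicate step of the proof.
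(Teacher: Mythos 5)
The paper does not prove this statement---it quotes it from \cite{bfmot13} (Theorem 7.5.2) and uses it as an imported black box---so there is no internal proof to compare against; your attempt must stand on its own.

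Your overall plan (pass to $\thu(X)$, run prime-end dynamics on the unbounded complementary domain $U$, extract a fixed point via a Cartwright--Littlewood/Bell-type theorem, then classify it) is in the right spirit, but several steps fail. First, the claim that $f$ induces a degree-$d'$ covering on the circle of prime ends is nontrivial: $f$ need not map $U=\C\sm\thu(X)$ into itself (components of $f^{-1}(\thu(X))$ other than $\thu(X)$ can sit inside $U$), and making this precise is already a substantial part of the machinery of \cite{bfmot13}. Second, when $d'=1$ the induced circle map can be conjugate to an \emph{irrational rotation} and have no fixed prime end at all; this is exactly what happens when $X$ is the boundary of a Siegel disk or a Cremer hedgehog, in which case the rotational fixed point sits in the \emph{interior} of $\thu(X)$ and is invisible to your boundary argument. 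Third, in the parabolic case the attracting petal ``pointing into $U$ through $e$'' lies on the $U$-side, hence outside $\thu(X)$, whereas the theorem requires an invariant parabolic domain contained \emph{in} $\thu(X)$. Fourth, and most critically, the repelling-case ``escape'' argument does not produce a contradiction: an arc of $X$ in a sector at a non-rotational repelling $p$ does leave every small linearizing neighborhood of $p$ under iteration, but its images remain inside the invariant set $X\subset K(f)$, so $f(X)=X\subset K(f)$ is never violated. The step you flag as delicate is indeed the crux, and it is handled in \cite{bfmot13} by index and outchannel arguments for positively oriented plane maps that your sketch does not reconstruct.
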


Here a \emph{rotational fixed point} means one of the following:
\begin{itemize}
\item an attracting fixed point;
  \item a repelling or parabolic fixed point where no invariant external ray lands;
  \item a Siegel point;
  \item a Cremer point.
\end{itemize}
In other words, a fixed point is rotational unless it is the landing point of
 some invariant external ray.

Theorem \ref{t:752} is related to the following result of \cite{GM}.
Let $f$ be a polynomial of any degree $>1$.
Consider the union $\Sigma_f$ of all invariant external $f$-rays with the set $\mathrm{Fix}_f$ of their landing points.
In other words, $\mathrm{Fix}_f$ is the set of all repelling or parabolic fixed points of $f$.
A \emph{rotational object} of $f$ is defined as either a rotational fixed point or an invariant parabolic domain.

\begin{thm}[\cite{GM}]
\label{t:gm}
Every component of $\C\sm\Sigma_f$ contains a unique invariant rotational object of $f$.
\end{thm}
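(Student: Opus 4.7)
My plan is to prove Theorem \ref{t:gm} by viewing each component of $\C\sm\Sigma_f$ as the affine part of a Jordan domain in $\overline{\C}$ that is invariant under $f$, and then applying a Denjoy--Wolff argument on each such domain.

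First, I would analyze the topology of $\Sigma_f\cup\{\infty\}$: it is a finite connected planar graph whose vertices are $\infty$ together with the landing points of the invariant rays, and whose edges are the closures of the invariant external rays. Its complement in $\overline{\C}$ is therefore a finite disjoint union of Jordan domains $\tilde U_1,\dots,\tilde U_N$ whose intersections with $\C$ are exactly the components of $\C\sm\Sigma_f$.

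Second, I would show that $f(\tilde U_j)\subset\tilde U_j$ for every $j$. The map $f$ fixes $\infty$ and each landing point, preserves each invariant ray as a set together with its orientation from $\infty$ to the landing point, and is orientation-preserving on $\overline{\C}$. At every finite vertex, the cyclic order of incident edges is preserved by $f$, so the local sectors are permuted trivially; at $\infty$, where $f$ is conjugate to $z\mapsto z^d$, the invariant rays correspond to angles $\theta=k/(d-1)$ fixed by $\theta\mapsto d\theta$, and again the asymptotic sectors are each fixed. Since the faces $\tilde U_j$ are in bijection with pairs of local sectors at adjacent vertices, each face must be sent into itself.

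Third, I would apply the Denjoy--Wolff theorem to the holomorphic self-map $f|_U:U\to U$, where $U=\tilde U_j\cap\C$. Conjugating by a Riemann map $U\cong\D$, two cases arise. Either $f|_U$ has an interior fixed point $z_0\in U$; since $z_0\notin\Sigma_f$ it is not a landing point of an invariant ray, so $z_0$ is attracting, Siegel, Cremer, or parabolic with no invariant ray landing on it, i.e.\ a rotational fixed point in $U$. Or $f|_U$ has no interior fixed point and all forward orbits converge locally uniformly to a boundary point $p\in\partial U\cap\Sigma_f$; such $p$ cannot be repelling (repelling fixed points repel nearby orbits in $U$), so it must be parabolic and $U$ contains an attracting petal at $p$, giving an invariant parabolic domain in $U$. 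Uniqueness of the rotational object in $U$ is then immediate from the uniqueness of the Denjoy--Wolff attractor.

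The main obstacle is the second step: a priori, $f$ could permute the faces $\tilde U_j$ nontrivially, and showing invariance of each face requires the orientation-preserving property of $f$ combined with the angle-fixing condition $(d-1)\theta\equiv 0\pmod{\Z}$ that defines invariant rays. The local analyses at the finite vertices (controlled by the multiplier of $f$ at the fixed point) and at $\infty$ (controlled by the formula $f(z)\sim z^d$) must match up consistently to rule out any nontrivial permutation; once this is in place, the Denjoy--Wolff step is essentially automatic.
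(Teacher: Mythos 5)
The paper does not prove this theorem; it is quoted from Goldberg--Milnor \cite{GM}. Judged on its own merits, your argument breaks down at the second step: the faces of $\Sigma_f\cup\{\infty\}$ are \emph{not} forward invariant, so there is no holomorphic self-map $f|_U:U\to U$ to which Denjoy--Wolff could be applied. The set $\Sigma_f$ is invariant, but $f^{-1}(\Sigma_f)$ is strictly larger than $\Sigma_f$: the non-invariant preimage rays (angles $\al$ with $d\al\equiv k/(d-1)$ but $\al\not\equiv k/(d-1)$) lie inside the components $U$ and are mapped onto $\Sigma_f$, so $f(U)$ crosses $\Sigma_f$. Concretely, for $f(z)=z^2$ one has $\Sigma_f=[1,\infty)$ and $U=\C\sm[1,\infty)$, yet $f(-2)=4\in\Sigma_f$. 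Your supporting claim about the sectors at infinity is also false: the sector between consecutive fixed angles $k/(d-1)$ and $(k+1)/(d-1)$ has angular width $1/(d-1)$, and $\theta\mapsto d\theta$ stretches it to width $d/(d-1)>1$, so its image wraps around the whole circle rather than staying put. There is even a conceptual obstruction showing invariance \emph{cannot} hold in general: if $f:U\to U$ were a holomorphic non-automorphism with a fixed point $z_0\in U$, the Schwarz--Pick lemma would force $|f'(z_0)|<1$, which is incompatible with $U$ containing a Siegel, Cremer, or rotational repelling fixed point --- precisely the cases the theorem is designed to capture (and the case relevant to this paper).

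What survives of your second step is only a local statement: along each invariant ray, $f$ preserves the two sides, so points sufficiently close to $\partial U$ inside $U$ have images on the correct side of the nearby boundary rays. This weaker boundary behavior is what the actual proofs exploit, not through iteration of $f|_U$ but through a fixed-point-index (Lefschetz-type) computation on each basic region, counting the $d$ fixed points of $f$ against the $d-1$ invariant rays and showing that each face carries total index one after the landing points on $\partial U$ are accounted for; uniqueness then comes from this count together with the separation structure of the cuts, not from a Denjoy--Wolff attractor. (Compare also Theorem \ref{t:752}, quoted from \cite{bfmot13}, whose proof rests on nontrivial fixed point theorems for plane continua.) To repair your argument you would need to replace the global invariance claim by this local one and substitute an index or degree argument for the Denjoy--Wolff step.
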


A subset of $\Sigma_f$ consisting of two rays landing at the same point and their common landing point is called a \emph{cut}.
Theorem \ref{t:gm} can be restated as follows:
 any pair of different invariant rotational objects for $f$ is separated by a cut from $\Sigma_f$.

\begin{thm}
  \label{t:land}
Let $\Ac$ be a periodic bubble ray for $P$.
Then $\Ac$ lands at a periodic repelling or parabolic non-rotational point of $P$.
\end{thm}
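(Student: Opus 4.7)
The plan is to show: (i) $X=\lim\Ac$ is a $P^m$-invariant continuum in $J(P)$; (ii) $X$ is a single point; (iii) that point is periodic for $P$, repelling or parabolic, and non-rotational. Steps (i) and (iii) are short; step (ii) is the real content.

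\textbf{Step 1 (basic properties of $X$).} First I would observe that $X=\bigcap_n\ol{\bigcup_{i\ge n}A_i}$ is a nested intersection of nonempty connected compacta, hence a continuum; that $P^m(X)=X$ because $P^m$ shifts the tail of $\Ac$ by some $k\ge 1$ onto itself (and the limit of the tail equals the limit of any further tail); and that $X\subset J(P)$ because any $z\in X$ is approached by points taken from infinitely many pairwise distinct bubbles, while Fatou components are open and pairwise disjoint, so $z$ cannot belong to one.

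\textbf{Step 2 (the main step: $X$ is a singleton).} Suppose for contradiction that $X$ is nondegenerate. By Theorem~\ref{t:752} applied to $P^m$ and the invariant continuum $X\subset J(P^m)$, the hull $\thu(X)$ must contain either a rotational fixed point of $P^m$ or an invariant parabolic domain of $P^m$. The strategy is to rule both out by a topological separation argument. The chain $\bigcup_i\ol{A_i}$ is connected and disjoint from $X$: each bubble is a Fatou component disjoint from $X\subset J(P)$, and consecutive bubbles meet at a single boundary point which, for all but finitely many indices, still lies outside $X$. Moreover, $\ol{\Delta(P)}=\ol{A_0}$ is joined to $\infty$ by any external $P$-ray landing at a point of $\d\Delta(P)\sm X$ (such rays exist by Theorem~\ref{t:Pe}, since $\d\Delta(P)$ is a Jordan curve dense with landing points of external rays). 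Hence $\ol{\Delta(P)}$, and with it every bubble of $\Ac$, lies in the unbounded component of $\C\sm X$; in particular $0\notin\thu(X)$, ruling out the Siegel fixed point of $P^m$ at the origin. Any remaining candidate is a periodic attracting, Siegel, Cremer, or parabolic object of $P$ of period dividing $m$ whose boundary or basin must contain a critical point. Since $1\in\d\Delta(P)$ is attached to $A_0$, only the free critical point $c$ remains; using the legality of $\Ac$ (Definition~\ref{d:reg-arc}, item (4)) and the bubble correspondence (Definition~\ref{d:leg}), any Fatou component or Siegel domain harboring $c$ is accessible from the chain $\bigcup_i\ol{A_i}$ and therefore lies in the unbounded component of $\C\sm X$, yielding a contradiction.

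\textbf{Step 3 (classifying the landing point).} Once $X=\{p\}$, the relation $P^m(p)=p$ makes $p$ periodic under $P$, and $p\in J(P)$ excludes the attracting case. Step~2 equally excludes Siegel and Cremer at $p$, so $p$ is repelling or parabolic. To see $p$ is non-rotational (meaning that some invariant external $P^m$-ray lands at $p$), apply Theorem~\ref{t:gm}: otherwise $p$ itself would be a rotational fixed point of $P^m$ lying in $\thu(\{p\})=\{p\}$, contradicting Step~2.

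The main obstacle is Step~2, and within it the argument that no rotational object supported by the free critical point $c$ can sit inside $\thu(X)$; this will require careful topological bookkeeping around the bubble correspondence and the legal structure of $\Ac$, since, unlike the Siegel point at $0$, the position of $c$ relative to the ray depends on $P\in\Cc^c_\la$.
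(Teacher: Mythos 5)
Your Step~1 is fine and essentially matches the paper. The substantive difference — and the place where your argument does not close — is in Steps~2 and~3.

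The paper's proof of the crucial ``$L$ is a singleton'' step is a one-move application of Theorem~\ref{t:gm}: if a rotational object $T$ sits in $\thu(L)$, then some cut from $\Sigma_f$ (a pair of invariant external rays together with their common landing point, a repelling or parabolic fixed point of $f=P^m$) separates $T$ from $0$. Since the bubble ray $\Ac$ starts at $\Delta(P)\ni 0$ and cannot cross such a cut, $L=\lim\Ac$ stays in the closure of the $0$-side, while $T\subset L$ lies strictly on the other side — contradiction. This uses only invariant rays at repelling/parabolic points, which always land; no accessibility or local connectivity of $J(P)$ is required. Your Step~2 instead tries to show $0\notin\thu(X)$ by joining $\ol{\Delta(P)}$ to $\infty$ via an external ray landing on $\d\Delta(P)\setminus X$, and this has real gaps. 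First, the citation of Theorem~\ref{t:Pe} is off-target: that theorem concerns $J(Q_\la)$, not $J(P)$; local connectivity of $J(P)$ (or even accessibility of $\d\Delta(P)$ from the basin of infinity) for $P\in\Cc^c_\la$ is not established in the paper, and cannot be assumed. Second, after excluding the Siegel point at $0$, your claim that ``only $c$ remains'' and that ``any Fatou component or Siegel domain harboring $c$ is accessible from the chain'' is unsupported: a Cremer or rotational repelling/parabolic fixed point has no basin containing $c$ in any usable sense, the Fatou–Shishikura count does not locate $c$ near the rotational object, and item~(4) of Definition~\ref{d:reg-arc} constrains $c$ only relative to $I$ and its images, not relative to $X$ or its hull. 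You flag this yourself as ``the main obstacle,'' but the flagged step is exactly what Theorem~\ref{t:gm} supplies and your argument does not.

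Step~3 also does not work as written. You try to deduce non-rotationality of $p$ by saying that otherwise $p$ is a rotational fixed point in $\thu(\{p\})$, ``contradicting Step~2.'' But Step~2 used Theorem~\ref{t:752}, which explicitly assumes the invariant continuum is not a singleton; once $X=\{p\}$, that machinery says nothing. The paper handles this case by a separate wedge argument: if $p$ were rotational, the finitely many periodic rays landing at the repelling/parabolic $p$ undergo a nontrivial combinatorial rotation under $f$, so the wedge $W\supset\bigcup\Ac$ bounded by two consecutive such rays is mapped, locally near $p$, into a disjoint wedge — contradicting invariance of $\bigcup\Ac$. Alternatively, the cut from Theorem~\ref{t:gm} separating $p$ from $0$ again works (this is the paper's argument for the Cremer/Siegel case); either way, you need one of these explicit arguments, not a reference back to the nondegenerate case.
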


\begin{proof}
Let $L$ be the limit set of $\Ac$. It is easy to see that $L$ and $\Delta(P)$ are disjoint as otherwise
some boundary points of some bubbles from $\Ac$ will be shielded from infinity, a contradiction.

Let $L$ be of minimal period $m$, and consider the map $f=P^m$.
It suffices to prove that $L$ is a singleton.
Suppose otherwise.
Then by Theorem \ref{t:752}, the set $L$ contains an $f$-invariant rotational
object $T$ (rotational $f$-fixed point or an $f$-invariant parabolic domain).
As above, construct the set $\Sigma_f$; by Theorem \ref{t:gm}, one of its cuts separates $T$ and $0$.
Evidently, $\Ac$ cannot intersect this cut
which implies that $L$ must be located on one side of the cut while $T$ is located on the other side. A contradiction.
Hence $L$ is an $f$-fixed point. Since it belongs to $J(P)$, it is not attracting. If it is Cremer or Siegel, then,
again relying on Theorem \ref{t:gm}, we separate $L$ from $0$ with a rational cut, again a contradiction. Hence
$L$ is an $f$-fixed repelling or parabolic point $a$.
If $a$ is rotational,
then periodic rays landing at $a$ undergo a nontrivial combinatorial rotation under $f$
(since, by definition of a rotational fixed point, no invariant external ray can land at $a$).
Let $W\supset \bigcup\Ac$ be a wedge bounded by two consecutive $f$-rays landing at $a$.
Locally near $a$, the wedge $W$ is mapped to some other wedge disjoint from $W$.
A contradiction with $\bigcup\Ac\subset W$.
\end{proof}

\section{Stability}
\label{s:stab}
We start with a very general continuity property.
Let $\mathrm{Rat}_d$ be the space of all degree $d$ rational self-maps of $\ol\C$ with the topology of uniform convergence.
We also write $\mathrm{Comp}$ for the space of all compact subsets of $\ol\C$ with the Hausdorff metric.
Note that the Hausdorff metric on $\mathrm{Comp}$ as well as the uniform convergence on $\mathrm{Rat}_d$
 are associated with the spherical metric on $\ol\C$.
The following lemma is basically a consequence of the Open Mapping property of holomorphic functions.

\begin{lem}
  \label{l:Hcont}
  Consider the map from $\mathrm{Rat}_d\times\mathrm{Comp}\to\mathrm{Comp}$ given by
  $$
  (f,X)\mapsto f^{-1}(X).
  $$
  This map is continuous.
\end{lem}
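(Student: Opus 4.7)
The plan is to split Hausdorff continuity of $(f,X)\mapsto f^{-1}(X)$ into upper and lower semicontinuity. Recall that a sequence $K_n\to K$ in the Hausdorff metric on $\mathrm{Comp}$ is equivalent to: (i) every subsequential limit of points $y_n\in K_n$ lies in $K$, and (ii) every $y\in K$ is the limit of some sequence $y_n\in K_n$. Fix convergent sequences $f_n\to f$ in $\mathrm{Rat}_d$ and $X_n\to X$ in $\mathrm{Comp}$, and set $K_n=f_n^{-1}(X_n)$, $K=f^{-1}(X)$. Part (i) is immediate from continuous evaluation: if $y_{n_k}\in K_{n_k}$ and $y_{n_k}\to y$, then uniform convergence $f_n\to f$ on $\ol\C$ (spherical metric) forces $f_{n_k}(y_{n_k})\to f(y)$; since $f_{n_k}(y_{n_k})\in X_{n_k}$ and $X_{n_k}\to X$, the upper semicontinuity for $X_n$ yields $f(y)\in X$, hence $y\in K$.

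For part (ii), fix $y\in K$, put $x=f(y)\in X$, and using $X_n\to X$ choose $x_n\in X_n$ with $x_n\to x$. The goal is to produce $y_n\to y$ with $f_n(y_n)=x_n$. Pass to holomorphic charts around $y$ and $x$ in which $f$ is represented by a non-constant holomorphic function $g$ with $g(y)=x$, and $f_n$ by $g_n\to g$ uniformly on some closed disk $\ol D$ about $y$. The function $g-x$ has an isolated zero at $y$ of some multiplicity $k\ge 1$, so after shrinking $D$ I may assume $y$ is the unique zero of $g-x$ in $\ol D$ and $|g(z)-x|\ge\delta>0$ on $\partial D$. For $n$ sufficiently large, $\sup_{\ol D}|g_n-g|<\delta/3$ and $|x_n-x|<\delta/3$, so
$$
|(g_n(z)-x_n)-(g(z)-x)|<\tfrac{2\delta}{3}<|g(z)-x|\qquad(z\in\partial D).
$$
Rouch\'e's theorem then supplies $k$ zeros of $g_n-x_n$ inside $D$; pick one such $y_n$. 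Applying the construction to a nested sequence of disks $D^{(j)}\downarrow\{y\}$ and a diagonal extraction produces the required sequence $y_n\to y$ with $f_n(y_n)=x_n\in X_n$.

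The only technical point is the Rouch\'e/Hurwitz step in part (ii): one must track that the zero of $f-x$ at $y$ persists under the \emph{simultaneous} perturbation of both the map (to $f_n$) and the target value (to $x_n$). This stability is exactly the Open Mapping property for non-constant holomorphic functions, and it is the place where the hypothesis $f_n,f\in\mathrm{Rat}_d$ (guaranteeing non-constancy in every local chart) is used; the remainder of the proof is a mechanical unpacking of the definition of Hausdorff convergence.
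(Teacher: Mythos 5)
Your proof is correct and follows essentially the same strategy as the paper's: the easy direction (your part (i), upper semicontinuity) is just uniform convergence plus continuity, and the hard direction (your part (ii), producing preimages near a given one under simultaneous perturbation of the map and the target set) rests on the stability of zeros of non-constant holomorphic maps. Where the paper appeals informally to the Open Mapping property to argue that the $g$-image of an $\eps$-ball still covers a small ball around $g(x)$, you make the same step concrete via Rouch\'e--Hurwitz with the $\delta/3$ bookkeeping; this is a more explicit rendering of the same argument, not a genuinely different route.
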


\begin{proof}
In what follows, ``$\eps$-close'' means ``at distance at most $\eps$''.
Fix $(f,X)\in\mathrm{Rat}_d\times\mathrm{Comp}$.
Choose $\eps>0$.
We need to show that, if $\delta=\delta(\eps)>0$ is sufficiently small
and $(g,Y)$ is $\delta$-close to $(f,X)$, then $g^{-1}(Y)$ is $\eps$-close to $f^{-1}(X)$.
Here $(g,Y)$ being $\delta$-close to $(f,X)$ means that $g$ is $\delta$-close to $f$ and $Y$ is $\delta$-close to $X$.
By definition, $g^{-1}(Y)$ being $\eps$-close to $f^{-1}(X)$ means
 that for every point $x\in f^{-1}(X)$, there is $y\in g^{-1}(Y)$ that is $\eps$-close to $x$, and vice versa:
 for every $y$ with $g(y)\in Y$, there is $x\in f^{-1}(X)$ that is $\eps$-close to $y$.

First, take $x\in f^{-1}(X)$.
Then $g(x)$ is $\delta$-close to $f(x)$.
There is a point $y^*\in Y$ that is $\delta$-close to $f(x)$, since $Y$ is $\delta$-close to $X$.
Finally, $y^*$ being $2\delta$-close to $g(x)$ implies the existence of $y\in g^{-1}(y^*)$ that is $\eps$-close to $x$.
Moreover, the corresponding choice of $\delta$ can be made independent of $g$.
Indeed, by the Open Mapping property, the $f$-image of the $\eps$-neighborhood of $x$
 includes the $4\delta$-neighborhood of $f(x)$.
Hence, the $g$-image of the $\eps$-neighborhood of $x$ incudes the $3\delta$-neighborhood of $f(x)$,
 and the latter includes the $2\delta$-neighborhood of $g(x)$.

Now take $y\in g^{-1}(Y)$; the argument is similar to the above.
By the assumption, $g(y)$ is $\delta$-close to $f(y)$, and there is a point $x^*\in X$ that is $\delta$-close to $g(y)$.
Since $x^*$ is $2\delta$-close to $f(y)$, there is a point $x$ such that $f(x)=x^*$ and $x$ is $\eps$-close to $y$.
\end{proof}

We will need the following corollary of Lemma \ref{l:Hcont}.

\begin{cor}
\label{c:comp-cont}
Suppose that $(f,X)\in\mathrm{Rat}_d\times\mathrm{Comp}$, that $X$ is connected, and that $A$ is a component of $f^{-1}(X)$.
If there are no critical points of $f$ in $A$, then for all $(g,Y)$ close to $(f,X)$ there is
 a component of $g^{-1}(Y)$ close to $A$ and not containing critical points of $g$.
\end{cor}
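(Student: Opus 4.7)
The plan is to localize the problem to a small neighborhood $U$ of $A$ in which $f$ has no critical points and which contains no other components of $f^{-1}(X)$, and then invoke Lemma~\ref{l:Hcont}. Because $A$ is a component of the compact set $f^{-1}(X)\subset\ol\C$, the spherical distance $\delta_0=\mathrm{dist}(A,f^{-1}(X)\sm A)$ is positive (with $\delta_0=+\infty$ if $A=f^{-1}(X)$), and since $\mathrm{Crit}(f)$ is finite and disjoint from $A$, the distance $\rho_0=\mathrm{dist}(A,\mathrm{Crit}(f))$ is also positive. Fix $\eta$ with $0<\eta<\min(\delta_0,\rho_0)/3$ and let $U$ be the open spherical $\eta$-neighborhood of $A$. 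Then $\ol U$ contains no critical points of $f$ and meets $f^{-1}(X)$ exactly in $A$. Because critical points of a rational map depend continuously on the map (as zeros of the derivative), for $g\in\mathrm{Rat}_d$ sufficiently close to $f$ all critical points of $g$ stay outside $\ol U$. By Lemma~\ref{l:Hcont}, for $(g,Y)$ close enough to $(f,X)$ the set $g^{-1}(Y)$ is Hausdorff $(\eta/2)$-close to $f^{-1}(X)$ and so is disjoint from $\d U$. Hence $A^g:=g^{-1}(Y)\cap U$ is both open and closed in $g^{-1}(Y)$ and is a finite union of components of $g^{-1}(Y)$, none containing a critical point of $g$; moreover $A^g$ is Hausdorff close to $A$.

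Next I would single out a component of $A^g$ that is by itself Hausdorff close to $A$. Because $f$ has no critical points in $\ol U$ and $f|_A$ is proper, $f|_A:A\to X$ is an unbranched covering of some degree $n$; I would cover $X$ by finitely many small topological disks $W_i$ over each of which $f|_{\ol U}$ admits $n$ holomorphic inverse branches $\phi_{i,j}:W_i\to U$. For $(g,Y)$ close to $(f,X)$ these branches deform to inverse branches $\phi^g_{i,j}$ of $g$ on slightly smaller disks $W'_i$, and $A^g=\bigcup_{i,j}\phi^g_{i,j}(Y\cap W'_i)$. Pick $a\in A$ and a nearby $a^g\in A^g$, and let $C^g$ be the component of $g^{-1}(Y)$ containing $a^g$. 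Tracking any path inside $A$ from $a$ to any other point of $A$ through the perturbed local inverses shows that $C^g$ approximates all of $A$ in Hausdorff distance, and this $C^g$ is the desired component.

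The hard part is exactly this last extraction: Lemma~\ref{l:Hcont} gives Hausdorff control only of the full pullback $g^{-1}(Y)$ and says nothing a priori about its individual components, so $A^g$ could in principle fragment into several pieces each covering only a portion of $A$. The local covering structure furnished by the absence of critical points in $\ol U$ is precisely what is needed to reassemble such pieces into a single component by chaining perturbed inverse branches; in the intended applications $Y$ will itself be connected (typically the closure of a bubble), which makes the chaining argument transparent.
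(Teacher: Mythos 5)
Your Steps 1--2 coincide with the paper's localization: the paper fixes $\eps$ so that the $5\eps$-neighborhood of $A$ maps \emph{homeomorphically} onto a neighborhood of $X$, invokes Lemma~\ref{l:Hcont}, and uses injectivity of $g$ on the $2\eps$-neighborhood of $A$ to conclude that exactly one component of $g^{-1}(Y)$ lies there and is $\eps$-close to $A$. The divergence, and the genuine gap, is your Step 3, where you treat $f|_A:A\to X$ as a covering of arbitrary degree $n$ and propose to reassemble the pieces of $A^g$ by chaining perturbed inverse branches. That chaining cannot be made to work, because the statement is simply false for $n>1$. Take $f(z)=z^2$, $X$ the unit circle (so $A=f^{-1}(X)$ is the unit circle, connected and free of critical points, with $n=2$), $g=f$, and $Y\subset X$ a closed arc of angular length $2\pi-\eps$. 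Then $Y$ is connected and Hausdorff $\eps$-close to $X$, yet $g^{-1}(Y)$ consists of two disjoint arcs of angular length about $\pi$, neither of which is Hausdorff close to $A$. The failure is exactly the point you flag and then dismiss: $Y\cap W'_i$ need not be connected even when $Y$ is, so the lifts $\phi^g_{i,j}(Y\cap W'_i)$ need not link up along a path, and when $f|_A$ has nontrivial monodromy the small gap in $Y$ genuinely disconnects the preimage into pieces each covering only part of $A$.

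The corollary is therefore only correct under the additional (implicit) hypotheses that $Y$ is connected and that $f$ is injective on a neighborhood of $A$, i.e.\ $f|_A:A\to X$ has degree one; this is exactly what the first sentence of the paper's proof asserts, and it holds in every application (the pulled-back arcs and bubbles map forward homeomorphically). Under that hypothesis your own Step 2 essentially finishes the argument: $g$ is injective on $\overline U$ for $g$ near $f$, every component of $g^{-1}(Y)$ maps \emph{onto} the connected compactum $Y$ (a standard property of rational maps), so $A^g$ cannot contain two distinct components, and the unique component it contains is Hausdorff close to $A$ and free of critical points of $g$. You should either add these hypotheses explicitly or restrict to the degree-one case rather than attempt the general covering argument.
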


\begin{proof}
For some $\eps>0$, the $5\eps$-neighborhood of $A$ maps homeomorphically onto a neighborhood of $X$.
Take $(g,Y)$ so close to $(f,X)$ that $g^{-1}(Y)$ is $\eps$-close to $f^{-1}(X)$.
This is possible by Lemma \ref{l:Hcont}.
Moreover, we may assume that $g$ is injective on the $4\eps$-neighborhood of $A$.
Let $B$ be a component of $g^{-1}(Y)$ intersecting the $2\eps$-neighborhood of $A$.
It follows that $B$ lies entirely in the $2\eps$-neighborhood of $A$.
Indeed, if a point of $B$ is at distance $2\eps$ from $A$, then it cannot be $\eps$-close to $f^{-1}(X)$
 by the assumption that $f$ is injective on the $5\eps$-neighborhood of $A$.
In particular, the closest point to any $b\in B$ is in $A$, and this closest point is $\eps$-close to $b$.
Observe also that $B$ is the only component of $g^{-1}(Y)$ in the $2\eps$-neighborhood of $A$, since
 the restriction of $g$ to the latter neighborhood is injective.
Now take any $a\in A$, and let $b$ be the closest to $a$ point of $g^{-1}(Y)$.
Then $b$ is $\eps$-close to $a$, hence $b\in B$.
We see that $A$ and $B$ are $\eps$-close, as desired.
\end{proof}

\subsection{Equicontinuous motion}
Let $A\subset \ol\C$ be any subset and $\Lambda$ be a metric space with a marked base point $\tau_0$.
A map $(\tau,z)\mapsto \iota_\tau(z)$ from $\Lambda\times A$ to $\ol\C$ is an \emph{equicontinuous motion
 (of $A$ over $\Lambda$)} if $\iota_{\tau_0}=id_A$, the family of maps $\tau\mapsto \iota_\tau(z)$ parameterized by $z\in A$ is equicontinuous,
 and $\iota_\tau$ is injective for every $\tau\in\Lambda$.
An equicontinuous motion is \emph{holomorphic} if $\Lambda$ is a Riemann surface,
 and each function $\tau\mapsto \iota_\tau(z)$, where $z\in A$, is holomorphic.
By the \emph{$\la$-lemma} of \cite{MSS}, to define a holomorphic motion, it is enough to require that
 every map $\iota_\tau$ is injective, and $\iota_\tau(z)$ depends holomorphically on $\tau$, for every fixed $z$.
Then the family of maps $\iota_\tau$ is automatically equicontinuous.
Suppose now that $F_\tau:\ol\C\to\ol\C$ is a family of rational maps such that $F_{\tau_0}(A)\subset A$.
An equicontinuous motion $(\tau,z)\mapsto \iota_\tau(z)$ is \emph{equivariant} with respect to the family $F_\tau$ if $\iota_\tau(F_{\tau_0}(z))=F_\tau(\iota_\tau(z))$ for all $z\in A$.
If the family $F_\tau$ is clear from the context, then we also say that the holomorphic motion
 \emph{commutes with the dynamics}.

We first study the equicontinuous motion of the Siegel disk $\Delta(P)$.
The following theorem is an easy consequence of known results.

\begin{thm}
\label{t:sul}
Choose an arbitrary base point $P_0\in \Cc^c_\la$ and an arbitrary point $z\in\ol\Delta(P_0)$.
There is an equivariant equicontinuous motion $\iota_P$ of $\ol \Delta(P)$ over $\Cc^c_\la$ such that $\iota_{P_0}=id$.
Moreover, $\iota_P(z)$ has the same polar coordinates in $\ol \Delta(P)$ as $z$ in $\ol \Delta(P_0)$.
If $P_0\notin\Zc^c_\la$, then this equicontinuous motion is holomorphic on $\Cc^c_\la\sm\Zc^c_\la$.
\end{thm}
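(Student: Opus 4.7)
The plan is to define the motion directly in terms of normalized Riemann maps. For each $P\in\Cc^c_\la$, let $\ol\psi_P:\cdisk\to\ol\Delta(P)$ be the extended Riemann map with $\ol\psi_P(0)=0$ and $\ol\psi_P(1)=1$, which is a homeomorphism by Theorem~\ref{t:Pe} together with Zakeri's result (Theorem~\ref{t:Za-qs}) that $\d\Delta(P)$ is a quasicircle. Set
$$
\iota_P(z)=\ol\psi_P\circ \ol\psi_{P_0}^{-1}(z),\qquad z\in\ol\Delta(P_0).
$$
Then $\iota_{P_0}=id$ on $\ol\Delta(P_0)$, and each $\iota_P$ is a homeomorphism $\ol\Delta(P_0)\to\ol\Delta(P)$. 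The preservation of polar coordinates is automatic from the definition in Definition~\ref{d:polar}, since both $\ol\psi_P$ and $\ol\psi_{P_0}$ use the normalization that sends $0$ to the center and $1$ to the base point, so $(\rho,\al)$ is sent to $(\rho,\al)$.

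For equivariance, recall that $\ol\psi_P$ conjugates the rigid rotation $w\mapsto \la w$ on $\cdisk$ with $P|_{\ol\Delta(P)}$; the same holds for $P_0$. Hence, writing $w=\ol\psi_{P_0}^{-1}(z)$,
$$
\iota_P(P_0(z))=\ol\psi_P(\la w)=P(\ol\psi_P(w))=P(\iota_P(z)),
$$
so $\iota_P$ commutes with the dynamics. Equicontinuity of the family $\{\iota_P\}_{P\in\Cc^c_\la}$ (with respect to $P$) is a direct consequence of Corollary~\ref{c:equicont}: both $\ol\psi_P$ and $\ol\psi_P^{-1}$ form equicontinuous families (the latter because, as remarked in the paragraph preceding Corollary~\ref{c:equicont}, the quasiconformal constants of $H_B$ and $\varphi_B$ from Proposition~\ref{p:C10.5Za} are uniformly bounded). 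Their composition, which gives $\iota_P(z)$ as a function of $P$ for each fixed $z$, is therefore equicontinuous.

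It remains to establish holomorphicity on $\Cc^c_\la\sm\Zc^c_\la$ when $P_0\notin\Zc^c_\la$. On each of the punctured disks $\Oc^*_\la(0)$ and $\Oc^*_\la(\infty)$, the free critical point avoids $\d\Delta(P)$, so the surgery construction from Section~\ref{ss:petzak} provides the qc conjugacies $\varphi_B$ of Proposition~\ref{p:C10.5Za} depending holomorphically on the parameter $c$ (the invariant Beltrami coefficient $\si$ can be chosen to vary holomorphically away from $\Zc^c_\la$). Consequently, $\ol\psi_P=\varphi_B\circ H_B^{-1}$, evaluated at any fixed $w\in\cdisk$, depends holomorphically on $P$ in $\Cc^c_\la\sm\Zc^c_\la$; the same therefore holds for $\iota_P(z)=\ol\psi_P(\ol\psi_{P_0}^{-1}(z))$. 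Together with injectivity of each $\iota_P$, the MSS $\la$-lemma then upgrades the pointwise holomorphic dependence to a genuine holomorphic motion. The main obstacle is this last step: checking that the Zakeri surgery genuinely varies holomorphically in the parameter away from $\Zc^c_\la$ (equivalently, checking that one can choose the almost complex structures $\sigma$ to depend holomorphically on $c\in\Oc^*_\la(0)\cup\Oc^*_\la(\infty)$), which uses that on these loci the free critical orbit stays away from $\d\Delta(P)$ and thus does not obstruct the holomorphic choice.
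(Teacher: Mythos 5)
The first two-thirds of your proof (definition of $\iota_P$ as $\ol\psi_{\Delta(P)}\circ\ol\psi_{\Delta(P_0)}^{-1}$, equicontinuity from Corollary~\ref{c:equicont}/\ref{c:sup-conv}, equivariance from the conjugacy with the rotation, and preservation of polar coordinates) coincides with the paper's argument.

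The holomorphicity step is where you diverge, and there is a genuine gap. You assert that the qc conjugacies $\varphi_B$ of Proposition~\ref{p:C10.5Za}, hence $\ol\psi_{\Delta(P)}=\varphi_B\circ H_B^{-1}$, depend holomorphically on the parameter because the invariant Beltrami form $\sigma$ ``can be chosen to vary holomorphically away from $\Zc^c_\la$.'' This is not justified and is in fact dubious: the Beltrami form in Zakeri's surgery is built from the Douady--Earle extension $H_B$ of the quasi-symmetric conjugacy $B|_{\uc}\to\mathrm{Rot}_\ta$, and there is no mechanism in the construction that would make $H_B$ (or the resulting $\sigma$) depend holomorphically on $c$ — quasi-symmetric linearizations of critical circle maps are highly non-analytic objects, and the Douady--Earle extension is conformally natural but not holomorphic in external parameters. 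Thus neither factor $\varphi_B$ nor $H_B^{-1}$ is holomorphic in $c$, so you cannot get holomorphicity of the composition from this route.

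The paper avoids the surgery construction entirely for this step. The $P$-orbit of $1$, which is dense in $\d\Delta(P)$, moves holomorphically in the obvious way (each $P^n(1)$ is a polynomial in the coefficients), and by the MSS $\la$-lemma this extends to an equivariant holomorphic motion of $\d\Delta(P)$. Sullivan's remark \cite{sul} (published in \cite{zak16}) then gives an equivariant holomorphic motion of the whole $\ol\Delta(P)$ extending the boundary motion and conformal on the interior with the given normalization; by uniqueness of the Riemann map this must equal the $\iota_P$ already constructed, and in particular preserves polar coordinates. This is the point your argument is missing: the holomorphic dependence of the Riemann map is a consequence of the holomorphic motion of the boundary via Sullivan/Zakeri, not something one reads off from the surgery.
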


\begin{proof}
Set $\iota_P(z)=\ol\psi_{\Delta(P)}\circ\ol\psi_{\Delta(P_0)}^{-1}$.
By Corollary \ref{c:sup-conv}, the function $\iota_P$ depends continuously on $P$ with respect to the sup-norm.
This means that $\iota_P$ is an equicontinuous motion.
The equivariance follows from the fact that $\ol\psi_{\Delta(P)}$ conjugates the rotation by $\ta$ with $P|_{\ol\Delta(P)}$.

Suppose now that $P_0\notin\Zc^c_\la$ and $P$ runs through $\Cc^c_\la\sm\Zc^c_\la$.
The $P$-orbit of $1$ moves holomorphically with $P\in\Cc^c_\la\sm\Zc^c_\la$;
By \cite{MSS}, this motion extends to an equivariant holomorphic motion of $\d \Delta(P)$, cf. \cite{che20}.
By a remark of D. Sullivan \cite{sul} (see \cite{zak16} for a published proof), there exists an equivariant holomorphic motion
 $\iota_P:\ol \Delta(P_0)\to \ol \Delta(P)$ that extends the holomorphic motion of $\d \Delta(P)$
 and is such that $\iota_P:\Delta(P_0)\to \Delta(P)$ is a conformal isomorphism taking $0$ to $0$ and $1$ to $1$.
By the uniqueness of the Riemann map the map $\iota_P$ is the same as before.
In particular, $\iota_P$ preserves the polar coordinates.
\end{proof}

\subsection{Stability of legal arcs}
The equicontinuous motion of $\ol\Delta(P)$ extends to some other dynamically defined subsets.

\begin{dfn}[Stability]
  Consider $P_0\in\Cc^c_\la$ and a subset $A\subset Y(P_0)$.
Since $Y(P_0)$ is by definition forward invariant, it follows that $P_0^n(A)\subset Y(P_0)$ for all $n\ge 0$.
Set $B=\bigcup_{n\ge 0} P_0^n(A)$.
Say that $A$ is \emph{stable} (or \emph{$\la$-stable}) if there is an equivariant equicontinuous motion $\{\iota_P^B\}$
 of $B$ over an open neighborhood of $P_0$ in $\Cc^c_\la$ such that, for every $z\in B$,
 the point $z\<P\>=\iota^B_P(z)$ has the same multi-angle and the same polar radius as $z$.
Clearly, if such an equicontinuous motion exists, then it is unique.
Write $A\< P\>$ for $\iota^B_P(A)$ etc.
If the equicontinuous motion is in fact holomorphic, then say that $A$ is \emph{holomorphically stable}.
\end{dfn}

\begin{lem}
  \label{l:regarc-mov}
  Take $P_0\in\Cc^c_\la$ and a point $z\in Y(P_0)$ that has a finite multi-angle.
If $z$ is never mapped to $c$, or if $c\in\ol{\Delta(P_0)}$,
 then the legal arc $I_z$ from $0$ to $z$ in $K(P_0)$ is stable.
It is holomorphically stable if $P_0\notin\Zc^c_\la$.
\end{lem}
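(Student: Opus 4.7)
The plan is to build the motion bubble-by-bubble, combining Theorem \ref{t:sul} (for the Siegel disk) with an inductive pullback argument based on Corollary \ref{c:comp-cont}. Since $z$ has finite multi-angle, the bubble chain $A_0=\Delta(P_0), A_1, \dots, A_n$ for $I_z$ is finite; moreover, each forward iterate $P_0^k(I_z)$ is a legal arc whose bubble chain is (essentially) obtained by shifting indices and mapping bubbles forward, so after finitely many steps we land in $\ol{\Delta(P_0)}$. Consequently, the set $B=\bigcup_{k\ge 0} P_0^k(I_z)$ lies in the closures of finitely many bubbles $\Bc(P_0)$, and it suffices to move each of these bubbles continuously and consistently with dynamics.

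First I would apply Theorem \ref{t:sul} to obtain the base equivariant equicontinuous motion $\iota_P^0$ of $\ol{\Delta(P)}$ (holomorphic off $\Zc^c_\la$), preserving polar coordinates. Then, for each $A\in\Bc(P_0)$ of positive generation, I would inductively define $A\<P\>$ as follows. Since $P_0(A)$ has strictly smaller generation, $P_0(A)\<P\>$ is already defined; by Corollary \ref{c:comp-cont}, applied to the connected set $P_0(A)\<P\>$ and to $P$ close to $P_0$, there is a unique component $A\<P\>$ of $P^{-1}(P_0(A)\<P\>)$ close to $A$, varying continuously (and holomorphically off $\Zc^c_\la$) in $P$, provided $A$ contains no critical point of $P_0$. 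Once $A\<P\>$ is identified, its framing (center, root, radius) is determined by the construction, so its extended Riemann map $\ol\psi_{A\<P\>}$ is canonical; define the motion on $\ol A$ by $\iota_P(w)=\ol\psi_{A\<P\>}(\rho_w e^{2\pi i\ta_w})$. Equicontinuity follows from Corollary \ref{c:sup-conv} composed with the finitely many iterates of $P$ relating $A\<P\>$ back to $\Delta(P)\<P\>$; equivariance $\iota_P\circ P_0 = P\circ \iota_P$ on $B$ is immediate from Definition \ref{d:bub-coor}, since $P$ acts on polar coordinates by multiplication of the argument by $\la$, uniformly in $P$.

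The main obstacle is the handling of critical bubbles, i.e., bubbles of $\Bc(P_0)$ that contain $c_0$. This is precisely where the hypothesis is used. If $c_0\in\ol{\Delta(P_0)}$, then $c_0\in\d\Delta(P_0)$ (a critical point cannot lie in the interior of a Siegel disk), so no other bubble has $c_0$ in its interior, and every bubble in $\Bc(P_0)\setminus\{\Delta(P_0)\}$ is off-critical; Corollary \ref{c:comp-cont} then applies unimpeded in the inductive step, and $c_0$ itself moves via $\iota_P^0$. If instead no iterate $P_0^k(z)$ equals $c_0$, then condition (4) of Definition \ref{d:reg-arc} combined with the fact that $c_0\ne 0$ forces $c_0\notin P_0^k(I_z)$ for every $k\ge 0$; hence, even if some $A\in\Bc(P_0)$ is critical, the portion of $B$ inside $\ol A$ consists of internal rays of $A$ avoiding $c_0$, and lies in a specific off-critical branch of the two-to-one map $P_0|_A\to P_0(A)$. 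Using the analysis of radii in critical bubbles from the paragraph preceding Definition \ref{d:leg}, the corresponding branch for nearby $P$ is selected continuously, and the inductive pullback is well-defined on this branch (the alternative branch, containing the critical value $P_0(c_0)$'s ray, is irrelevant to $B$).

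Finally, when $P_0\notin\Zc^c_\la$, Theorem \ref{t:sul} furnishes a holomorphic base motion, and each step of the inductive construction (selection of $A\<P\>$ via Corollary \ref{c:comp-cont} and parameterization via the Riemann map) preserves holomorphic dependence on $P$; by the $\la$-lemma of \cite{MSS} the resulting motion is automatically an equivariant holomorphic motion of $B$ over a neighborhood of $P_0$ in $\Cc^c_\la\setminus\Zc^c_\la$, giving holomorphic stability of $I_z$.
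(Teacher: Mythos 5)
Your proposal replaces the paper's inductive pullback of \emph{arc segments} (a double induction on the length of the multi-angle and on the number of $\Pi$-iterations needed to reach $(0)$ or $(0,0)$) with a bubble-by-bubble construction: move each bubble $A$ of the finite chain via Corollary~\ref{c:comp-cont}, then transport points by the extended Riemann map of $A\langle P\rangle$. This is a genuinely different strategy, but it has a gap precisely where equicontinuity matters. You never establish that the family $P\mapsto\ol\psi_{A\langle P\rangle}$ is sup-norm continuous for bubbles of positive generation. Corollary~\ref{c:sup-conv} is specific to the Siegel disk --- its proof rests on the uniform quasiconformal estimates of Proposition~\ref{p:C10.5Za} and Corollary~\ref{c:equicont} --- and the assertion that the estimate transfers ``composed with finitely many iterates of $P$'' is precisely what needs proof. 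The inverse branch $(P^n|_{\ol{A\langle P\rangle}})^{-1}:\ol{\Delta(P)}\to\ol{A\langle P\rangle}$ has unbounded derivative near the root point of $A\langle P\rangle$ (a boundary point lying over the critical point $1$), and composing such an inverse-branch family with the equicontinuous Siegel-disk motion does not automatically preserve a uniform modulus of continuity; Hausdorff continuity of $A\langle P\rangle$ from Corollary~\ref{c:comp-cont} is strictly weaker than Carath\'eodory convergence of the boundary parameterizations. The paper sidesteps this entirely by pulling back one arc segment $I_{[w,z]}\langle P\rangle$ at a time from its image $T\langle P\rangle$, after observing that $T\langle P\rangle$ \emph{avoids critical values of $P$}: the inverse branch is then bi-Lipschitz with uniform constants, so equicontinuity (and, off $\Zc^c_\la$, holomorphicity) is inherited without any Riemann-map estimate.

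A secondary defect is the treatment of critical bubbles. Corollary~\ref{c:comp-cont} explicitly requires the component $A$ to contain no critical point of $f$, so it cannot be used to move a bubble $A\ni c$; and ``selecting the off-critical branch for nearby $P$'' is not a construction, since $P_0|_A$ is a proper two-to-one map, not a disjoint union of branches. What actually works here is to pull back the individual internal rays of $B\cap\ol A$ --- which do avoid $c$, by your correct observation that the hypotheses force $c\notin P_0^k(I_z)$ for all $k\ge 0$ --- by local inverse branches of $P$ near the arc. But that is again the paper's arc-pullback mechanism, so the bubble-moving strategy has to be abandoned at precisely the point where the hypothesis of the lemma is in force. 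Your identification of the two sub-cases (either $c\in\d\Delta(P_0)$ so that no positive-generation bubble is critical, or $c\notin P_0^k(I_z)$ for all $k$) is correct and is the right place to invoke the hypothesis; the rest of the argument needs to be restructured along the paper's lines to close the gaps above.
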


\begin{proof}
  Suppose that $\vec\al=(\al_0,\dots,\al_k)$ is the multi-angle of $z$.
If $k=0$, then the statement follows from Theorem \ref{t:sul}.
If $\vec\alpha=(0,0)$, then the statement follows from Theorem \ref{t:sul} and Lemma \ref{l:Hcont}.
Thus we assume that $k>0$ and $\vec\alpha\ne (0,0)$.
Since $\vec\al$ is a legal sequence of angles, $\Pi^m(\vec\al)=(0)$ or $(0,0)$ for some minimal integer $m>0$.
The subsequent argument employs induction on both $m$ and $k$.
Let $w$ be the last (closest to $z$) point of $I_z$ with multi-angle $(\al_0,\dots,\al_{k-1})$.
Then $I_z$ is the concatenation of $I_w$ (=the legal arc from $0$ to $w$), and $I_{[w,z]}$ (=the legal arc from $w$ to $z$).
By induction on $k$, assume that $I_w$ is stable.
In particular, $w\< P\>$ is defined for all $P$ close to $P_0$, and $w\< P\>$ has the same multi-angle and polar radius as $w$.

By induction on $m$, assume that $P_0(I_z)$, hence also $T=P_0(I_{[w,z]})$, are stable.
Thus $T\< P\>$ depends continuously on $P$ in the Hausdorff metric.
Define $I_{[w,z]}\< P\>$ as the $P$-pullback of $T\<P\>$ containing $w\<P\>$.
Note that $T\<P\>$ is free from critical values of $P$ by our assumptions.
If $P$ is close to $P_0$, then, by Corollary \ref{c:comp-cont}, the set $I_{[w,z]}\<P\>$ is close to $I_{[w,z]}$.
In particular, $I_{[w,z]}\<P\>$ connects $w\<P\>$ with a point $z\<P\>$ that is close to $z$.
Moreover, $I_{[w,z]}\<P\>$ contains no critical points of $P$ and maps forward by $P$ in a homeomorphic fashion.
It follows that a suitable inverse branch of $P$ on $T\<P\>$ defines an equicontinuous motion of $I_{[w,z]}$.

Thus both $I_w$ and $I_{[w,z]}$ are stable.
It follows that their concatenation $I_z$ is also stable, as desired.
If $P_0\notin\Zc^c_\la$, then the argument goes through with ``equicontinuous'' replaced by ``holomorphic''.
\end{proof}

We now discuss stability of infinite periodic legal arcs.

\begin{thm}
  \label{t:stab}
  Let $\Ac$ be a periodic bubble ray for $P_0\in\Cc^c_\la$ landing at a repelling periodic point $x$.
Suppose that $P_0^i(I_x)$ does not contain $c$ for $i\ge 0$.
Then $I_x$ is stable; it is holomorphically stable if $P_0\notin\Zc^c_\la$.
\end{thm}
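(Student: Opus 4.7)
Let $m$ be the minimal period of $\Ac$ and $k\ge 1$ the shift, so that $P_0^m(A_{i+k})=A_i$ for all $i\ge 1$, and $x$ is a repelling fixed point of $g=P_0^m$. Write $z_i\in\ol{A_i}\cap\ol{A_{i+1}}$ for the successive ``junction'' points along the ray. Since $x$ is a repelling $m$-periodic point of $P_0$, the implicit function theorem applied to $P^m(z)=z$ gives a holomorphic continuation $P\mapsto x\<P\>$ on an open neighborhood $\Uf$ of $P_0$ in the full cubic parameter space, whose multiplier $\mu_P=(P^m)'(x\<P\>)$ is holomorphic in $P$ and satisfies $|\mu_P|>1$ throughout $\Uf$. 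Let $\phi_P$ be the Koenigs linearizer of $P^m$ at $x\<P\>$, normalized by $\phi_P(x\<P\>)=0$ and $\phi_P'(x\<P\>)=1$; restricted to a small fixed disk, $\phi_P$ and its inverse depend holomorphically (hence equicontinuously) on $P\in\Uf$.

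The plan is to cut $I_x$ into a finite prefix and a periodic tail, move each part separately, and glue. Fix $N$ so large that the tail $I_{[z_N,x]}$ lies compactly inside the Koenigs linearization domain for $P_0$, and that this domain is disjoint from the forward orbit of $c$; both are possible because by hypothesis no $P_0^i(I_x)$ contains $c$. The finite legal arc $I_{z_{N+k}}$ from $0$ to $z_{N+k}$ avoids $c$ under iteration, so by Lemma \ref{l:regarc-mov} it is $\la$-stable, and holomorphically stable off $\Zc^c_\la$. In particular, the moved subarc $I_{[z_N,z_{N+k}]}\<P\>$, which plays the role of a fundamental domain for the action of $g$ on the tail, depends continuously (equicontinuously) on $P$.

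For the tail I extend the motion by iterated pullback. The inverse branch of $g$ fixing $x\<P\>$ is, in linearization coordinates, multiplication by $\mu_P^{-1}$, hence a contraction with factor bounded above by some $\rho<1$ locally uniformly in $P$. For $j\ge 1$ define $I_{[z_{N+jk},z_{N+(j+1)k}]}\<P\>$ as the $j$-fold image of the moved fundamental domain $I_{[z_N,z_{N+k}]}\<P\>$ under this inverse branch; declaring the nested union together with $x\<P\>$ to be $I_{[z_N,x]}\<P\>$, and concatenating with the moved prefix $I_{z_N}\<P\>$, produces a candidate motion $\iota_P^{I_x}$. Preservation of multi-angles and polar radii on each $A_{N+jk}\<P\>$ is inherited from the analogous preservation on the fundamental domain via Lemma \ref{l:regarc-mov}, because the pullback construction is dynamically defined and satisfies $g\circ \iota_P=\iota_P\circ g$ on the fundamental domain.

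The main obstacle is converting this pointwise construction into a genuine equicontinuous motion on all of $I_x$. This reduces to two uniform statements for $P\in\Uf$: that $\phi_P^{-1}$ is defined on a common disk and varies equicontinuously (in fact holomorphically) with $P$, and that the contraction factor $|\mu_P|^{-1}$ is bounded above by a constant $\rho<1$ independent of $P$. Granting these, the spherical diameter of $I_{[z_{N+jk},x]}\<P\>$ decays geometrically in $j$ uniformly in $P$, forcing equicontinuity of $\iota_P^{I_x}$ at $x$; combined with the equicontinuity of the prefix and fundamental domain from Lemma \ref{l:regarc-mov}, this yields equicontinuity on all of $I_x$. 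Since every step of the construction is holomorphic in $P$ when $P_0\notin\Zc^c_\la$, $I_x$ is holomorphically stable in that case, completing the proof.
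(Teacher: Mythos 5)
Your proposal follows essentially the same approach as the paper's proof: decompose $I_x$ into a finite prefix stabilized by Lemma~\ref{l:regarc-mov}, identify a fundamental domain for the action of $P_0^m$ near the repelling point, and extend the motion to the tail by iterated univalent pullback, using the contraction to force equicontinuity at $x$. The only variation is that you route the contraction estimate through Koenigs linearization coordinates, whereas the paper works more directly with a round disk $D$ satisfying $D\Subset P_0^m(D)$ and the associated inverse branch $P^{-m}_D$; both work, and the paper's route is slightly more elementary. One side claim should be dropped: you assert the linearization domain can be chosen ``disjoint from the forward orbit of $c$'' and cite the hypothesis that no $P_0^i(I_x)$ contains $c$ as justification, but that hypothesis controls only the forward images of $I_x$, not whether the orbit of $c$ accumulates near $x$ (which in general it might). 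Fortunately this stronger disjointness is never actually used --- what the pullback construction needs is that the disk around $x$ avoids the finitely many critical points of $P^m$, which is automatic for a small enough disk since the repelling point $x$ cannot be one of them.
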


Note that, under assumptions of Theorem \ref{t:stab}, the arc $I_x$ is the core curve of $\Ac$.

\begin{proof}
  Let $m$ be the minimal $P_0$-period of $\Ac$, then $P_0^m(x)=x$.
Since $x$ is repelling, there is a small round disk $D$ around $x$ such that $D\Subset P_0^m(D)$,
and the map $P_0^m:D\to P_0^m(D)$ is a homeomorphism.
Suppose that $\Ac=(A_n)$.
Since $\Ac$ lands at $x$, then $\ol A_n\subset D$ for all $n\ge N$ for some $N$.
Also, $P_0^m$ shifts bubbles of $\Ac$ by a certain integer $k\ge 1$.

Clearly, there is a point $y\in I_x$ such that
$$
I_y=I_x\cap \ol{A_0\cup\dots\cup A_{N+k}}.
$$
By Lemma \ref{l:regarc-mov}, the legal arc $I_y$ is stable.
In particular, for $P$ close to $P_0$, there is a legal arc $I_y\<P\>$ close to $I_y$ and with the same multi-angle.
Moreover, $I_y\<P\>$ passes through legal bubbles $A_0\<P\>$, $\dots$, $A_{N+k}\<P\>$ of $P$ and terminates at $y\<P\>$.
Consider the point $z=P_0^m(y)\in I_y$ and the segment $I_{[z,y]}$ of $I_x$ from $z$ to $y$.
Then $I_{[z,y]}$ is also stable, the corresponding segment $I_{[z,y]}\<P\>$ for $P$ connects $z\<P\>$ with $y\<P\>$.
Note also that $I_{[z,w]}\subset D$ (the point $z$ belongs to the closure of $A_N$).

If $P$ is close to $P_0$, then $D\Subset P^m(D)$, and $P^m:D\to P^m(D)$ is a homeomorphism.
Write $P^{-m}_D$ for the inverse of this homeomorphism.
Then $P^{-m}_D$ is a well-defined holomorphic map on $D$ depending analytically on $P$.
Since $x$ is repelling, it is stable, so that there is a nearby repelling point $x\<P\>$ for $P$ of the same period.
Set
$$
I_x\<P\>=I_y\<P\>\cup
\left(\bigcup_{k=1}^\infty (P^{-m}_D)^k(I_{[z,y]}\<P\>)\right)
\cup\{x\<P\>\}.
$$
Every term in the right-hand side moves equicontinuously with $P$ as long as $P$ stays close to $P_0$.
The infinite union moves equicontinuously since for $P^{-m}_D$ the point $x\<P\>$ is attracting
 (the iterates cannot inflate the modulus of continuity).
It is also clear that the motion is holomorphic provided that $P_0\notin\Zc^c_\la$ and $P$ is close to $P_0$.
\end{proof}

\subsection{Stability of Siegel rays}
Theorem \ref{t:stab} parallels a classical result on stability of periodic external rays landing at repelling points.

\begin{lem}[\cite{hubbdoua85}, cf. Lemma B.1 \cite{GM}]
 \label{l:rep}
Let $P_0$ be a polynomial, and $z$ be a repelling periodic point of $P_0$.
If an external ray $R_{P_0}(\theta)$ with rational argument $\theta$ lands at $z$, then,
 for every polynomial $P$ sufficiently close to $P_0$,
 the ray $R_{P}(\theta)$ lands at a repelling periodic point $z\<P\>$ of $P$ close to $z$,
and $z\<P\>$ depends holomorphically on $P$.
\end{lem}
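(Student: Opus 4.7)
\emph{Plan.} The argument mirrors Theorem \ref{t:stab}, with the B\"ottcher coordinate at infinity substituting for the Siegel uniformization and external rays substituting for bubble rays. Let $d=\deg P_0$ and let $q$ be the minimal period of $\theta$ under $\zeta\mapsto d\zeta\pmod 1$, so $P_0^q$ fixes $z$ and preserves $R_{P_0}(\theta)$ set-wise. Since $|(P_0^q)'(z)|>1$, in particular $(P_0^q)'(z)\ne 1$, the implicit function theorem applied to $P^q(w)=w$ gives a unique holomorphic continuation $z\<P\>$ on a neighborhood of $P_0$, still repelling.

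By Koenigs linearization, fix a small round disk $D\ni z$ on which $P_0^q$ is a homeomorphism with $D\Subset P_0^q(D)$. For $P$ close to $P_0$ the same inclusion holds at $z\<P\>$, producing a holomorphic family of inverse branches $P^{-q}_D$ with attracting fixed point $z\<P\>$. Decompose the ray as $R_{P_0}(\theta)=\gamma_{P_0}\cup I_{P_0}$, where $I_{P_0}=R_{P_0}(\theta)\cap\ol D$ is the compact sub-arc from its last entry point $p\in\d D$ to $z$, and $\gamma_{P_0}$ is the complementary arc from $\infty$ to $p$.

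On the outer side, $\gamma_{P_0}$ lies entirely in the basin of infinity of $P_0$. Because $\phi_P$ and $\phi_P^{-1}$ depend holomorphically on $(w,P)$ on compact subsets of the basin that avoid critical orbits (by uniform convergence of the defining B\"ottcher iteration), the formula $\gamma_P=\phi_P^{-1}\circ\phi_{P_0}(\gamma_{P_0})$ defines a holomorphic motion of $\gamma_{P_0}$, with endpoint $p\<P\>=\phi_P^{-1}(\phi_{P_0}(p))$. On the inner side, pick a fundamental segment $J_{P_0}\subset I_{P_0}$ for the contraction $(P^{-q}_D)|_{I_{P_0}}$; then
\[
I_{P_0}=\{z\}\cup\bigcup_{k\ge 0}(P^{-q}_D)^{k}(J_{P_0}).
\]
The B\"ottcher motion carries $J_{P_0}$ to a nearby arc $J_P$ (one endpoint being $p\<P\>$), and the analogous formula with $J_P$ in place of $J_{P_0}$ produces a Jordan arc $I_P$ with endpoint $z\<P\>$. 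Each finite union moves holomorphically in $P$, while the infinite union is equicontinuous because $z\<P\>$ is attracting for $P^{-q}_D$ (inverse-branch iterates cannot inflate the modulus of continuity); the $\la$-lemma then upgrades equicontinuity to holomorphy.

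The main step to verify is that the concatenation $\gamma_P\cup I_P$ really is $R_P(\theta)$, rather than some other pullback entering $D$. This is resolved by tracking the B\"ottcher argument: on $\gamma_P$ it equals $2\pi\theta$ by construction, and on each pullback $(P^{-q}_D)^{k}(J_P)$ it also equals $2\pi\theta$ by the functional equation $\phi_P\circ P^q=\phi_P^{d^q}$. Hence the constructed arc coincides with $\{w:\arg\phi_P(w)=2\pi\theta\}$, namely $R_P(\theta)$, and it lands at the holomorphically moving repelling point $z\<P\>$.
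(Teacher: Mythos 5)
The paper offers no proof of Lemma \ref{l:rep}: it is imported verbatim from Douady--Hubbard and Goldberg--Milnor, so there is no internal argument to compare yours against. Your proof is the standard one, and its architecture correctly mirrors that of Theorem \ref{t:stab}: continue the repelling point by the implicit function theorem, move the compact ``outer'' arc by a uniformizing coordinate, and generate the ``inner'' tail by iterating a contracting inverse branch in a linearizing disk, with equicontinuity of the infinite union coming from the contraction. Two steps are glossed and deserve a sentence each. First, the motion of $\gamma_{P_0}$: the joint holomorphy of $(P,w)\mapsto\phi_P(w)$ furnished by the defining B\"ottcher iteration holds only at potentials exceeding those of the escaping critical points, so it does not by itself make sense of $\phi_P^{-1}\circ\phi_{P_0}$ down to the entry potential $t_0$. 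One must pull back finitely many times under $P^q$ along the ray, using that the compact arc of $R_{P_0}(\theta)$ with potentials in $[t_0,d^{qN}t_0]$ avoids all precritical points of $P_0$ (this is exactly where the hypothesis that $R_{P_0}(\theta)$ is a genuine, non-bifurcating landing ray enters), so that the same inverse branches persist for all $P$ near $P_0$. This pullback argument also supplies the part of the conclusion that is implicit when $P$ has escaping critical points, namely that $R_P(\theta)$ does not bifurcate; your closing identification of the constructed arc with $\{w:\arg\phi_P(w)=2\pi\theta\}$ tacitly presupposes this. Second, you assume $\theta$ is periodic (not merely preperiodic) under multiplication by $d$; this is true, because local injectivity of $P_0$ at the repelling cycle forces multiplication by $d$ to permute the arguments of the rays landing there, but it costs an extra line. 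Finally, the closing appeal to the $\lambda$-lemma is stated backwards --- it converts pointwise holomorphy plus injectivity into equicontinuity, not the reverse --- and is in any case unnecessary, since the holomorphy of $z\langle P\rangle$ already comes from the implicit function theorem.
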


Consider a periodic bubble ray $\Ac$ for $P_0$ and its core curve $I$.
By Theorem \ref{t:land}, the bubble ray $\Ac$ lands at a repelling or parabolic point $a$.
Let $m$ be the minimal period of $\Ac$, then $P^m(a)=a$.
Clearly, $I$ also lands at $a$,
and it is easy to see that $I=I_a$ is a legal arc from $0$ to $a$.

\begin{dfn}[Siegel rays]
Let $I$ and $a$ be as above.
By the classical Landing Theorem for polynomials (see e.g. \cite[Theorem 18.11]{mil06}),
one or several periodic external rays for $P$ land at $a$.
Let $R$ be an external ray landing at $a$.
Then $I\cup\{a\}\cup R$ is a simple curve connecting $0$ with $\infty$.
It is called a \emph{Siegel ray}.
The \emph{argument} of the Siegel ray $I\cup\{a\}\cup R$ is defined as the argument of $R$.
\end{dfn}

The following are immediate properties of Siegel rays.
Every Siegel ray originates at $0$ and extends to $\infty$.
Every Siegel ray contains precisely one periodic point $a\ne 0$; this point $a$ is repelling or parabolic.
Two different Siegel rays may have some initial segment in common.
They branch off either at an iterated preimage of $0$ or at a landing point of some bubble ray.
An external ray for $P_0$ is either disjoint from a Siegel ray or lies in the Siegel ray.

Theorem \ref{t:S-stab} below follows from Theorem \ref{t:stab} and Lemma \ref{l:rep}.

\begin{thm}
  \label{t:S-stab}
Let $\Sigma$ be a Siegel ray for $P_0\in\Cc^c_\la$.
Suppose that the non-zero periodic point in $\Sigma$ is repelling.
Then, for all $P\in\Cc^c_\la$ sufficiently close to $P_0$, there is a Siegel ray $\Sigma\<P\>$
 close to $\Sigma$ in the spherical metric and having the same argument.
Moreover, the periodic point in $\Sigma\<P\>$ depends holomorphically on $P$ provided that $P\notin\Zc^c_\la$.
\end{thm}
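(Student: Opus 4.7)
The plan is to decompose the Siegel ray into its three defining pieces and perturb each separately, then show that the perturbations glue consistently at the common periodic landing point. Write $\Sigma = I_a\cup\{a\}\cup R$, where $a$ is the non-zero repelling periodic point in $\Sigma$, $I_a$ is the legal arc from $0$ to $a$ (the core curve of some periodic bubble ray $\Ac$), and $R = R_{P_0}(\theta)$ is a periodic external ray with rational argument $\theta$ landing at $a$. Let $m$ be the minimal period of $\Ac$, so $P_0^m(a)=a$.

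First I would verify the hypothesis of Theorem \ref{t:stab} for $I_a$, namely that $P_0^i(I_a)$ does not contain $c$ for any $i\ge 0$. Since $\Ac$ consists of legal bubbles and is periodic, the finite set of forward images of $I_a$ is contained in the union of closures of bubbles from $\Ac$ and its images; legality rules out the critical point $c$ lying on any of these arcs, because $c$ is not periodic (as it would then be superattracting, contradicting membership of $P_0$ in $\Cc^c_\la$ with a Siegel fixed point and a non-trivial bubble ray). Invoking Theorem \ref{t:stab} then furnishes, for every $P\in\Cc^c_\la$ near $P_0$, a stable perturbation $I_a\<P\>$ of $I_a$ — equicontinuous in general and holomorphic off $\Zc^c_\la$ — with the same multi-angle, terminating at a repelling periodic point $a\<P\>$ of period $m$ that persists by the implicit function theorem applied to $P^m(z)=z$.

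Next I would apply Lemma \ref{l:rep} to $R$: for $P$ near $P_0$, the external ray $R_P(\theta)$ lands at a repelling periodic point of $P$ of period $m$ that depends holomorphically on $P$ and converges to $a$ as $P\to P_0$. Uniqueness of the holomorphic perturbation of a repelling periodic point forces this landing point to coincide with $a\<P\>$ from the previous step. The Hubbard--Douady stability of periodic external rays at repelling landing points, which is standard and used implicitly in Lemma \ref{l:rep}, yields that $R_P(\theta)$ converges to $R_{P_0}(\theta)$ in the spherical Hausdorff metric as $P\to P_0$. Setting $\Sigma\<P\> := I_a\<P\>\cup\{a\<P\>\}\cup R_P(\theta)$ produces a Siegel ray for $P$ of argument $\theta$. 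Spherical closeness follows by combining equicontinuity of the motion on the compact arc $I_a$ with the stability of $R_P(\theta)$; holomorphic dependence of $a\<P\>$ on $P$ outside $\Zc^c_\la$ is inherited from Theorem \ref{t:stab} and is consistent with the Hubbard--Douady holomorphic motion of the landing point.

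The main obstacle I anticipate is the compatibility check at the landing point $a$: two a priori independent perturbation procedures (the inward equicontinuous motion of the legal arc via Theorem \ref{t:stab}, and the outward holomorphic motion of the external ray via Lemma \ref{l:rep}) must be shown to hit the \emph{same} perturbed repelling periodic point. This compatibility is not automatic from the statements as isolated facts; it requires the observation that a repelling periodic point admits a \emph{unique} nearby continuation as a zero of $P^m(z)-z$ with multiplier of modulus $>1$, so the two constructions cannot produce distinct points. A secondary technical point is verifying that the non-critical hypothesis of Theorem \ref{t:stab} genuinely holds along the entire infinite arc $I_a$ rather than merely along its constituent bubbles, which reduces, via periodicity, to finitely many avoidance conditions on the orbit of $c$.
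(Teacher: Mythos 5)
Your decomposition-and-glue strategy is exactly the paper's (one-line) proof, which simply cites Theorem \ref{t:stab} for the core curve and Lemma \ref{l:rep} for the external ray; you correctly fill in the two points the paper leaves implicit, namely that the two continuations agree at the perturbed repelling point (by uniqueness of the nearby fixed point of $P^m$) and that the hypothesis of Theorem \ref{t:stab} is satisfied. One small tightening of your hypothesis check: non-periodicity of $c$ only excludes $c$ from the endpoints $0$ and $P^i(a)$ of the arcs $P^i(I_a)$; to exclude $c$ from the \emph{interior} of these arcs you should invoke condition (4) of Definition \ref{d:reg-arc} (no iterate of a legal arc is separated by $c$) rather than legality of the bubbles.
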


The only reason we require that $P\notin\Zc^c_\la$ in Theorem \ref{t:S-stab}
 is that holomorphic functions are defined on Riemann surfaces,
 and $\Cc^c_\la\sm\Zc^c_\la$ rather than $\Cc^c_\la$ has a natural structure of a Riemann surface.

\subsection{Siegel wedges}
Let $\Sigma$ and $\Sigma'$ be two Siegel rays for $P$.
By definition, they originate at $0$ and extend all the way to infinity.
Let $b$ be the point where $\Sigma$ and $\Sigma'$ branch off.
Assume that $b$ is an iterated preimage of $0$ rather than a periodic repelling or parabolic point.
Consider a wedge $W$ bounded by segments of $\Sigma$ and $\Sigma'$ from $b$ to infinity.
Notice that there are two such wedges; either wedge is called
a \emph{Siegel wedge (bounded by Siegel rays $\Sigma$ and $\Sigma'$)}, see Fig. \ref{fig:W}.
We also say that $b$ is the \emph{root point} of $W$. Set $I_b=\Sigma\cap\Sigma'$;
of the two wedges bounded by $\Sigma$ and $\Sigma'$ one contains $I_b$ and the other one is disjoint from $I_b$.
If $W$ is a Siegel wedge bounded by $\Sigma$ and $\Sigma'$ and
disjoint from $I_b$, call the multi-angle of $b$ the \emph{multi-angle of $W$}.
Otherwise (i.e. if $W$ contains $I_b\sm\{b\}$), we set the multi-angle of $W$ to be $()$ (the empty sequence).
If $\Sigma\cap\Sigma'=\{0\}$, we set the multi-angle of $W$ to be $()$ too.
The following property of Siegel wedges is immediate from the definitions.

\begin{figure}
  \centering
  \includegraphics[width=8cm]{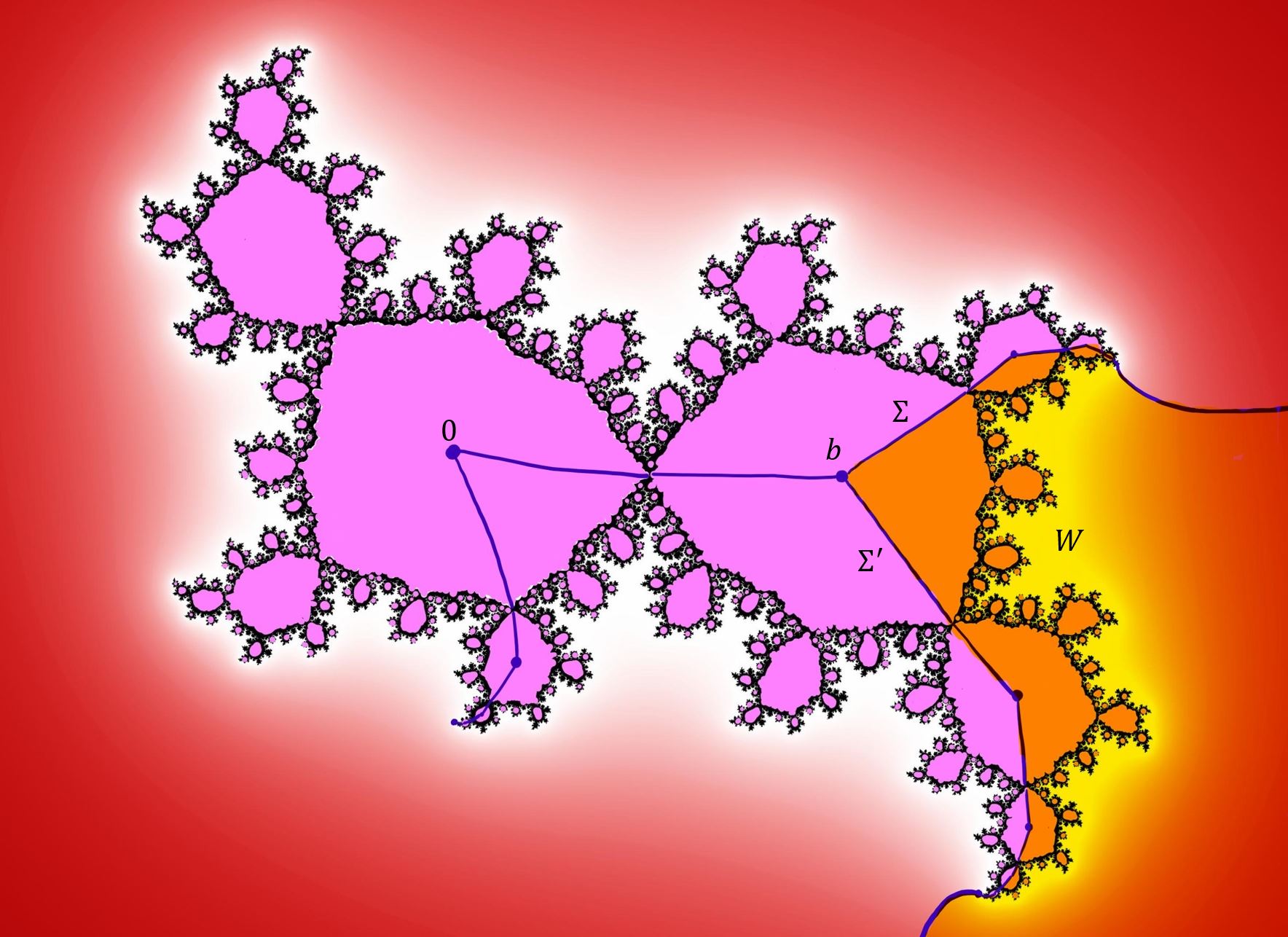}
  \caption{\small A Siegel wedge $W$ in the dynamical plane of $Q$.
  Here, $W$ is bounded by Siegel rays $\Sigma$ and $\Sigma'$ that
  have the legal arc $I_b$ from 0 to $b$ as the common initial segment
  and that branch off at point $b$.}
  \label{fig:W}
\end{figure}

\begin{prop}
  \label{p:SW-ma}
  Let $W$ be a Siegel wedge of multi-angle $\vec\al$.
Then the multi-angles of all points in $W\cap Y(P)$ contain $\vec\al$ as an initial segment.
\end{prop}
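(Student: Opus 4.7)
The plan is to argue that any legal arc $I_z$ from $0$ to a point $z\in W\cap Y(P)$ must traverse $I_b$ as its initial segment, forcing the multi-angle of $z$ to extend $\vec\al$. The case $\vec\al=()$ is vacuous, so assume $\vec\al$ is nonempty. Then by definition $b\ne 0$, $W$ is disjoint from $I_b$ (so in particular $0\notin \overline W$), $b\in\d W$, and the multi-angle of $b$ equals $\vec\al$. Let $\Sigma=I_a\cup\{a\}\cup R$ and $\Sigma'=I_{a'}\cup\{a'\}\cup R'$ be the two bounding Siegel rays, where $a,a'$ are the non-zero periodic points and $R,R'$ the external-ray tails. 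Since the shared initial segment of $\Sigma$ and $\Sigma'$ is $I_b$, the portion of $\d W$ lying in $K(P)$ consists of the subarc of $I_a$ from $b$ to $a$, the subarc of $I_{a'}$ from $b$ to $a'$, and the points $a$, $a'$, $b$ themselves; the remainder of $\d W$ (inside $\C$) consists of the external-ray pieces $R\cup R'$, which are disjoint from $K(P)$.

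Next I would use this to locate the crossing point. Since $I_z\subset K(P)$ starts at $0\notin\overline W$ and ends at $z\in W$, it must meet $\d W$, and every such meeting point lies in $K(P)\cap\d W$, hence on one of the legal subarcs of $\Sigma$, $\Sigma'$ described above. Let $y$ be the first such crossing along $I_z$. Then the initial segment $I_z^{(y)}$ of $I_z$ from $0$ to $y$ is itself an oriented arc in $K(P)$ satisfying all four conditions in Definition \ref{d:reg-arc}, since those conditions pass to any initial sub-arc. Hence $I_z^{(y)}$ is a legal arc from $0$ to $y$.

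Now I would invoke the uniqueness of legal arcs. The point $y$ lies on $I_a$ (say), and the sub-arc of $I_a$ from $0$ to $y$ is itself a legal arc from $0$ to $y$. By Lemma \ref{l:ubub} (which gives uniqueness of the bubble chain/ray, and hence of the associated legal arc) these two legal arcs coincide: $I_z^{(y)}$ equals the initial portion of $I_a$ up to $y$. Since $b\in I_a$ lies before $a$ on that arc and $y$ was chosen in the portion at or beyond $b$ in $\overline W$, the arc $I_z^{(y)}$ contains $I_b$ as an initial segment. Therefore $I_z$ itself contains $I_b$ as an initial segment, and reading off polar angles shows that the multi-angle of $z$ begins with the multi-angle of $b$, i.e.\ with $\vec\al$.

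The main obstacle I anticipate is pinning down cleanly that the first crossing $y$ genuinely lies on the legal-arc portion at or past $b$, and not merely on $I_b$ itself: a priori, $I_z$ might graze $\Sigma\cap\Sigma'=I_b$ before entering $W$. This is handled by choosing $y$ to be the \emph{last} point of $I_z\cap(\Sigma\cup\Sigma')$ that precedes the terminal sub-arc of $I_z$ contained in $\overline W$; such a $y$ must lie on $(\Sigma\cup\Sigma')\setminus I_b^\circ$ since $W$ is disjoint from $I_b$, so $y\in\{b\}\cup(I_a\setminus I_b)\cup\{a\}\cup(I_{a'}\setminus I_b)\cup\{a'\}$, and in every case the uniqueness argument above places $I_b$ as an initial segment of $I_z^{(y)}\subset I_z$.
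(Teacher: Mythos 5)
Your proof is correct. There is, however, nothing in the paper to compare it with: the authors introduce Proposition \ref{p:SW-ma} with the remark ``The following property of Siegel wedges is immediate from the definitions'' and supply no argument. Your write-up simply makes the implicit reasoning explicit. The structure is exactly what one would expect: since $0\notin\overline W$ (because $W$ is disjoint from $I_b$ and $\partial W$ omits the initial segment $I_b$ except for $b$), the legal arc $I_z\subset K(P)$ to any $z\in W\cap Y(P)$ must cross $\partial W$; such a crossing point $y$ lies in $\partial W\cap K(P)\subset (\Sigma\cup\Sigma')\setminus (I_b\setminus\{b\})$; and by uniqueness of legal arcs the initial sub-arc $I_z^{(y)}$ coincides with the legal sub-arc of $I_a$ (or $I_{a'}$) up to $y$, which contains $I_b$ since $b$ lies between $0$ and $y$ on that Siegel ray. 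Two small observations: (i) you can cite the uniqueness of legal arcs directly --- it is asserted immediately after Definition \ref{d:reg-arc} (``Note that if $I_z$ exists, then it is unique.'') --- rather than routing it through Lemma \ref{l:ubub}, though the Maximum Modulus argument there does re-prove it; (ii) your final paragraph's caution about grazing $I_b$ is well taken, but in fact causes no harm: if $y=b$ you get $I_z^{(y)}=I_b$ directly, so the only role of the careful choice of $y$ is to guarantee $y\in\partial W$, which your definition of $y$ as the last exit from $\C\setminus W$ accomplishes.
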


Fix a Siegel wedge $W$.
Recall that $\d W\cap K(P)\subset Y(P)$; the map $\eta_P:Y(P)\to K(Q)$ takes $z\in Y(P)$ to a unique point $w=\eta_P(z)$ with
the same multi-angle and polar radius.
Then $\eta_P(\d W\cap K(P))$ is the union of the core curves of two periodic bubble rays for $Q$.
These core curves land at some repelling periodic points, say, $x$ and $y$ of $Q$ (these are endpoints of $K(Q)$
as $K(Q)$ has no periodic cutpoints).
There are unique external rays landing at $x$ and $y$.
The union $\Gamma_Q$ of these external rays and $\eta_P(\d W\cap K(P))$ bounds a unique Siegel wedge $W_Q$
of $Q$ that contains points of $\eta_P(W\cap Y(P))$.
The wedge $W_Q$ is said to \emph{correspond} to the Siegel wedge $W$ of $P$.
Observe that since the endpoints of $\d W\cap K(P)$ may be cutpoints of $K(P)$,
 there may be several Siegel wedges of $P$ corresponding to the same Siegel wedge of $Q$.

\section{The dynamical map $\eta_P$}
\label{s:dynmap}
We now define a $P$-invariant continuum $X(P)\supset \ol{Y(P)}$ and
 extend the map $\eta_P:Y(P)\to K(Q)$ to $X(P)$.
If $\ol{Y(P)}$ contains no parabolic points of $P$, then we set $X(P)=\ol{Y(P)}$.
Suppose now that there is a parabolic periodic cycle in $\ol{Y(P)}$; let $a$ be a point in this cycle.
By the Fatou--Shishikura inequality, the cycle of $a$ is the only parabolic cycle of $P$.
In this case, let $X(P)$ be the union of $\ol{Y(P)}$ and the closures of all 
 immediate parabolic basins associated with the cycle of $a$.
Clearly, $X(P)$ is a forward invariant continuum.

\subsection{The structure of $X(P)$}
\label{ss:XP}
Consider possible intersections of $X(P)$ with bubbles of $P$.

\begin{lem}
  \label{l:XPbub}
Let $A$ be a bubble of $P$.
Suppose that a point $z\in\ol A\cap X(P)$ is different from the root point $r(A)$ of $A$.
Then $A$ is a legal bubble, and the entire bubble chain to $z$ consists of legal bubbles.
\end{lem}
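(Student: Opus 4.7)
The goal is to show that $A$ is a legal bubble; once this is established, the claim that the entire bubble chain to $z$ consists of legal bubbles is immediate from the definition of a legal bubble together with Lemma~\ref{l:ubub}. The plan is to split the argument according to where $z$ sits inside $X(P)$, and to rely on one structural fact about bubble adjacency that has to be isolated first.

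That structural fact is the following: two distinct bubbles of $P$ that meet on their boundaries can share at most one point, and this point must be the root of one of them. This should be extracted from the tree-like organization of bubbles together with the fact that each bubble is a Jordan disk and an iterated preimage of $\Delta(P)$: a shared boundary point $w$ of two different bubbles $A_1, A_2$ of generations $n_1 \le n_2$ would, under $P^{n_1}$, map $\partial A_1$ onto $\partial \Delta(P)$ while still lying on the boundary of a bubble $P^{n_1}(A_2)$ attached to $\Delta(P)$, and the only attachment points on $\partial \Delta(P)$ are iterated preimages of the critical value $1$, which forces $w$ to equal $r(A_1)$ or $r(A_2)$. I expect this combinatorial fact to be the main obstacle; once in place, the rest is a clean case analysis.

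Granting this fact, the first case is $z \in Y(P)$. The legal arc $I_z$ determines a bubble chain of legal bubbles $\Delta(P) = A_0, \dots, A_n$ with $z \in \ol{A_n}$. If $A_n = A$ then $A$ is already legal. Otherwise $z \in \partial A \cap \partial A_n$ with $A \ne A_n$, and the structural fact combined with $z \ne r(A)$ forces $z = r(A_n)$, so that $A$ is the parent of $A_n$ in the bubble tree, i.e.\ $A = A_{n-1}$, and $A$ is again legal.

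The remaining cases begin with $z \in \ol{Y(P)} \setminus Y(P)$: pick $z_k \in Y(P)$ with $z_k \to z$. If infinitely many $z_k$ lie in $\ol A$, then eventually $z_k \ne r(A)$ and the previous paragraph, applied to each such $z_k$, yields legality of $A$. Otherwise one may assume $z_k \notin \ol A$, so each $z_k$ lies in the closure of the terminal bubble $B_k$ of its chain with $B_k \ne A$. Since $z_k \to z \in \partial A \setminus \{r(A)\}$, the bubbles $B_k$ accumulate at $z$; the structural fact, combined with the observation that bubbles touching $\partial A$ are attached only at iterated preimages of $1$ on $\partial A$ and are themselves children of $A$, forces the chain to $z_k$ to pass through $A$ for all large $k$, so $A$ is legal. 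Finally, if $z$ lies in the closure of an immediate parabolic basin $U$ but not in $\ol{Y(P)}$, then $z \in \partial U \cap \partial A$, and one produces approximating sequences in $Y(P)$ near $z$ by pulling back along the dynamics near the parabolic cycle (which, by hypothesis, lies in $\ol{Y(P)}$), reducing to the preceding case.
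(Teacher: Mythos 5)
Your proof is correct in substance and arrives at the same core conclusion the paper does: $A\cap Y(P)\neq\emptyset$, established by looking at legal arcs into $A$. The paper's own proof is a single terse paragraph: it notes that $r(A)\in Y(P)$ (``otherwise points like $z$ would not exist''), observes that the legal arc $I_z$ must enter $A$ so $A\cap X(P)\neq\emptyset$, and then uses openness of $A$ to conclude $A\cap Y(P)\neq\emptyset$; finally, $I_z$ passing through every bubble of the chain gives legality of the whole chain. What you do differently is to make explicit the case analysis on where $z$ sits in $X(P)$ (in $Y(P)$, in $\ol{Y(P)}\setminus Y(P)$, or in a parabolic-domain closure), and to isolate a separate ``structural fact'' about bubble adjacency; the paper folds all of this into the parenthetical. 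Your expansion is useful, since the paper's reference to the arc $I_z$ strictly presupposes $z\in Y(P)$ even though the hypothesis only gives $z\in X(P)$, and the case distinction is what makes the argument airtight.

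Two small imprecisions in your write-up are worth flagging. First, your structural fact is argued only when $\mathrm{Gen}(A_1)<\mathrm{Gen}(A_2)$: if the two bubbles have \emph{equal} generation, both map to $\Delta(P)$ under the same iterate and the argument that $P^{n_1}(w)$ must be an attachment point does not apply as stated; you need an extra word on why same-generation bubbles are disjoint (or restrict the fact to the unequal-generation situation, which is all you actually use). Second, in the case $z_k\in Y(P)\setminus\ol A$ you speak of ``the terminal bubble $B_k$ of its chain,'' but a point of $Y(P)$ may have infinite multi-angle, in which case its chain is an infinite bubble ray with no terminal bubble. The fix is immediate (pass to the last bubble whose closure contains a given initial segment, or argue directly that the arcs $I_{z_k}$ must cross $\partial A$), but as written the sentence does not cover that subcase.
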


\begin{proof}
Since $z\ne r(A)$, then $r(A)\in Y_P$ (otherwise points like $z$ would not exist) and
the legal arc $I_z\subset K_P$ from $0$ to $z$ is non-disjoint from $A$;
hence $A\cap X(P)\ne\0$. Since $A$ is open, $A\cap Y(P)\ne\0$, that is, $A$ is legal.
Also, $I_z$ intersects every bubble in the bubble chain to $z$, it follows that all bubbles in this chain are legal.
\end{proof}

The following is an immediate corollary of Lemma \ref{l:XPbub}.

\begin{cor}
  \label{c:XPbub}
  Let $A$ be a bubble of $P$.
Either $A$ is legal, or $\ol A$ has no points of $X(P)$ except possibly $r(A)$ in which
case $r(A)$ is eventually mapped to $c$, non-strictly before it is mapped to $1$ and
 strictly before $A$ is mapped to $\Delta(P)$.
\end{cor}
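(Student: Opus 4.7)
The proof plan is to derive both halves of the dichotomy from Lemma \ref{l:XPbub} together with Definition \ref{d:leg}. First I would apply the contrapositive of Lemma \ref{l:XPbub}: if $A$ is not legal, then no point of $\ol{A} \cap X(P)$ can be distinct from $r(A)$, so $\ol{A} \cap X(P) \subseteq \{r(A)\}$. This already establishes the ``either/or'' structure of the statement.

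Next, assuming $A$ is not legal and $r(A) \in X(P)$, I would deduce the dynamical condition on $r(A)$. Write $n = \mathrm{Gen}(A)$. Definition \ref{d:leg} characterizes legality as the conjunction of $r(A) \in Y(P)$ and $P^i(r(A)) \ne c$ for every $i < n$. If $P^k(r(A)) = c$ for some $k < n$ we are done, so assume for contradiction that $P^i(r(A)) \ne c$ for all $i < n$; then $r(A) \notin Y(P)$. Under this assumption I would construct a legal arc from $0$ to $r(A)$ by successively pulling back, via the appropriate inverse branches of $P$, the radius in $\ol{\Delta(P)}$ from $0$ to $1 = P^{n-1}(r(A))$ along the backward orbit $1, P^{n-2}(r(A)), \dots, P(r(A)), r(A)$. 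The hypothesis that no iterate of $r(A)$ hits $c$ before time $n$ is exactly what makes each pullback step unambiguous and keeps the resulting concatenation in compliance with conditions (1)--(4) of Definition \ref{d:reg-arc}. The existence of such an arc forces $r(A) \in Y(P)$, contradicting non-legality.

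For the numerical interpretation, since $P^{n-1}(r(A)) = 1$ by the definition of the root point and $n$ is minimal with $P^n(A) = \Delta(P)$, the bound $k < n$ simultaneously says that $r(A)$ hits $c$ no later than it hits $1$ (i.e., $k \le n-1$) and strictly before $A$ reaches $\Delta(P)$. The main subtlety I expect is in the pullback construction when the chain passes through a critical or precritical bubble where $P$ is two-to-one: one must verify that the distinguished preimage of $1$ selected as $r(A)$ in Definition \ref{d:leg} corresponds to the correct local inverse branch, and that the branch choice remains consistent along the whole chain precisely under the ``no iterate equals $c$'' hypothesis.
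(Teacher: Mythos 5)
Your first step is right: the contrapositive of Lemma~\ref{l:XPbub} gives ``either $A$ is legal or $\ol A\cap X(P)\subseteq\{r(A)\}$'', and the reduction of the second half to ``show $r(A)\in Y(P)$ and then invoke Definition~\ref{d:leg}'' is also the right skeleton. The numerical translation of $k<n$ is correct as well. But the way you try to produce $r(A)\in Y(P)$ has a genuine gap, and it is exactly where you stop using the standing hypothesis $r(A)\in X(P)$.

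Concretely: you attempt to build the legal arc to $r(A)$ by pulling back the radius $\ol{R_{\Delta(P)}(0)}$ along the backward orbit of $1$, and you assert that the condition $P^i(r(A))\ne c$ for $i<n$ is what keeps the construction alive. It isn't. First, a purely technical point: iterated pullbacks of a single radius along the orbit of $r(A)$ produce an arc that lies inside one bubble, running from $r(A)$ to an iterated \emph{preimage} of $0$ (the center of that bubble), not an arc from $0$. A legal arc from $0$ to $r(A)$ is a concatenation through the whole bubble chain $\Delta(P)=A_0,A_1,\dots,A_{k-1}$, and requires every one of those bubbles to be legal. Second, and more fundamentally, the legality of $A_1,\dots,A_{k-1}$ depends on the orbits of their own root points $r(A_i)$, which are different points from $r(A)$. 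The condition $P^i(r(A))\ne c$ for $i<n$ says nothing about those orbits. For a concrete failure mode: if $c\in\d\Delta(P)$ with $P^j(c)=1$, and $A_1$ is the bubble attached to $\Delta(P)$ at $c$, then $A_1$ is not legal ($P^0(r(A_1))=c$), so no bubble $A$ further along that chain has $r(A)\in Y(P)$, even though $r(A)$ itself may never hit $c$. Your pullback never ``sees'' this obstruction, so the argument as written would incorrectly conclude $r(A)\in Y(P)$.

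What rescues the statement is the hypothesis $r(A)\in X(P)$, which you record but never use after step two. The intended argument (and the reason the paper calls this ``immediate'' from Lemma~\ref{l:XPbub}) is to apply the lemma again, to the \emph{other} bubble touching $r(A)$: since $P^{n-1}(r(A))=1\in\d\Delta(P)$, the point $r(A)$ lies on the boundary of some bubble $A'$ of generation $<n$; and $r(A)\ne r(A')$ because otherwise $1$ would be periodic on $\d\Delta(P)$, impossible for the irrational rotation (when $A'=\Delta(P)$ one has $r(A)=1\in\ol{\Delta(P)}\subset Y(P)$ directly and the argument short-circuits). Applying Lemma~\ref{l:XPbub} to $A'$ with $z=r(A)\ne r(A')$ gives that the bubble chain to $r(A)$ exists and is legal, i.e.\ $r(A)\in Y(P)$. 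Now Definition~\ref{d:leg} forces $P^k(r(A))=c$ for some $k<n$, which is the stated conclusion. In the counterexample above this argument correctly returns $r(A)\notin X(P)$, so there is nothing to prove, consistent with the corollary's ``possibly''.
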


Suppose now that $A$ is legal but $A\not\subset Y(P)$.
Then there is a point $z\in A$ that is eventually mapped to $c$, say $P^n(z)=c$.
The intersection $A\cap Y(P)$ is then a proper subset of $A$ whose geometry
 is described below in Theorem \ref{t:XPinbub}.
The radial vector field $\d/\d\rho$ is well-defined on $P^{n+1}(A)\sm\{o\}$,
 where $o$ is the center of $P^{n+1}(A)$.
The integral curves of $\d/\d\rho$ are precisely the internal rays of $P^{n+1}(A)$.
The pullback of $\d/\d\rho$ to $A$ is also a well-defined vector field $v$ on $A\sm P^{-(n+1)}(o)$.
However, $v$ has zero at $z$, and there are two special integral curves of $v$
 whose $\alpha$-limit set coincides with $\{z\}$ and whose $\omega$-limit sets are in $\d A$.
These integral curves are called \emph{separatrices}.

\begin{thm}
\label{t:XPinbub}
Let $A$ be a legal bubble of $P$ such that a point $z\in A$ is eventually mapped to $c$,
 and $A_Q$ be the corresponding bubble of $Q$.
Then $\eta_P:Y(P)\cap A\to A_Q$ extends to a continuous map $\eta_P:X(P)\cap A\to A_Q$,
and one of the following two cases holds:
\begin{enumerate}
  \item The set $X(P)\cap A$ is
  the closure of a separatrix.
  The map $\eta_P:X(P)\cap A\to A_Q$ is one-to-one, and $\eta_P(X(P)\cap A)$ is a terminal segment of
  the internal ray of $A_Q$ landing at the root point of $A_Q$.
  \item The set $X(P)\cap A$ is a sector of $A$ bounded by the two separatrices together with $z$.
  It is mapped under $\eta_P$ onto $A_Q$, the boundary of the sector mapping two-to-one,
  and otherwise the map being one-to-one.
\end{enumerate}
\end{thm}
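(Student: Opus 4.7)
The plan is to pin down $Y(P)\cap A$ explicitly using the polar-coordinate analysis of Subsection \ref{ss:petzak}, then extend $\eta_P$ by continuity. By the legal-arc definition, every point of $Y(P)\cap A$ is reached from $0$ by a legal arc that enters $A$ at $r(A)$, moves along internal rays of $A$ (constant polar angle), switches angle only at iterated preimages of $0$ inside $A$, and never takes $z$ as an interior point. Thus the reachable set in $A$ is controlled entirely by the pullback structure of the root radius of $P^{n+1}(A)$ into $A$ via $P^{n+1}$.

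The first main step will be to establish the dichotomy. In Case 1 the pullback of $R(P^{n+1}(A))$ is either a cross at $z$ with $r(A)$ as a separatrix endpoint (the ``center-not-defined'' scenario), or, in the degenerate sub-scenario where the center of $P(A)$ equals $P(c)$, a V with vertex at $c$ whose two arms land at preimages of $r(P(A))$. Either way the incoming radius from $r(A)$ is precisely the separatrix $S_1$ from $r(A)$ to $z$; legal arcs cannot continue past $z$, since some $P^k$-image would then have $c$ as an interior point, and by uniqueness of bubble chains (Lemma \ref{l:ubub}) no detour through other bubbles can re-enter $A$ past $z$. Hence $Y(P)\cap A=S_1^\circ$ and $X(P)\cap A=\ol{S_1}\cap A$ is the closure of a separatrix. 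In Case 2 the incoming radius reaches a non-critical center $o_A$ of $A$, from which legal arcs propagate along internal rays of every polar angle; their union is the open sector $V\subset A$ cut out on the $o_A$-side by the chord $S_1\cup\{z\}\cup S_2$, and $X(P)\cap A$ is its closure in $A$, namely the sector bounded by the two separatrices together with $z$.

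Next I would define and extend $\eta_P$ via polar-coordinate matching. Since the only finite critical point of $Q$ is $1\in\d\Delta(Q)$, every bubble of $Q$ is off-critical and polar coordinates give a homeomorphism $\ol{A_Q}\to\ol\disk$; setting $\eta_P(w)$ to be the unique point of $A_Q$ with the same polar coordinates as $w$ defines $\eta_P$ on $Y(P)\cap A$, and continuity of polar coordinates on $\ol A$ yields the continuous extension to $X(P)\cap A$. In Case 1 the separatrix has constant polar angle equal to the root angle of $A$ (matching the root angle of $A_Q$) and polar radii running from $1$ down to $\rho_z=\rho_c$, so its image is the terminal segment of the internal ray of $A_Q$ landing at $r(A_Q)$ and the map is one-to-one. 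In Case 2 the separatrices $S_1$ and $S_2$ share polar coordinates (their $\d A$-endpoints project to the same point of $\d A_Q$), so the chord $S_1\cup\{z\}\cup S_2$ maps two-to-one onto the portion of the cross-angle radius of $A_Q$ between $\eta_P(z)$ and $\d A_Q$, while the rest of $\ol V$ maps bijectively onto the complement; altogether $\ol V\cap A$ maps onto $A_Q$, two-to-one on the chord and one-to-one elsewhere.

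The hardest part will be the dichotomy itself: verifying that in Case 1 there is no legal workaround reaching past $z$ (excluded by the interior-critical-point restriction and by Lemma \ref{l:ubub}), and that in Case 2 the legal fan-out from $o_A$ really does fill the entire open sector rather than only a proper subset. Both reduce to the cross/V analysis of preimages of special radii recalled in Subsection \ref{ss:petzak}, together with the constraint that $c$ may not lie in the interior of any $P^k$-image of a legal arc.
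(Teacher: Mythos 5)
Your proposal is correct and follows essentially the same route as the paper: split into two cases according to whether the pullback of the incoming radius of $P^{n+1}(A)$ reaches a non-critical center of $A$ before hitting $z$, identify $Y(P)\cap A$ as a single radial segment or as an open sector bounded by the two separatrices, and define and extend $\eta_P$ by polar-coordinate matching. The paper phrases the Case~(2) analysis by first describing $P(A\cap Y(P))$ inside $B=P(A)$, introducing a special segment $T$, and pulling $T$ back to the cross $T'$ at the critical point, whereas you work directly in $A$ via the cross/V picture of the pullback of the incoming radius; these are equivalent formulations of the same argument.
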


Cases (1) and (2) of Theorem \ref{t:XPinbub} are illustrated in Fig. \ref{fig:53}.

\begin{figure}
  \centering
  \includegraphics[width=8cm]{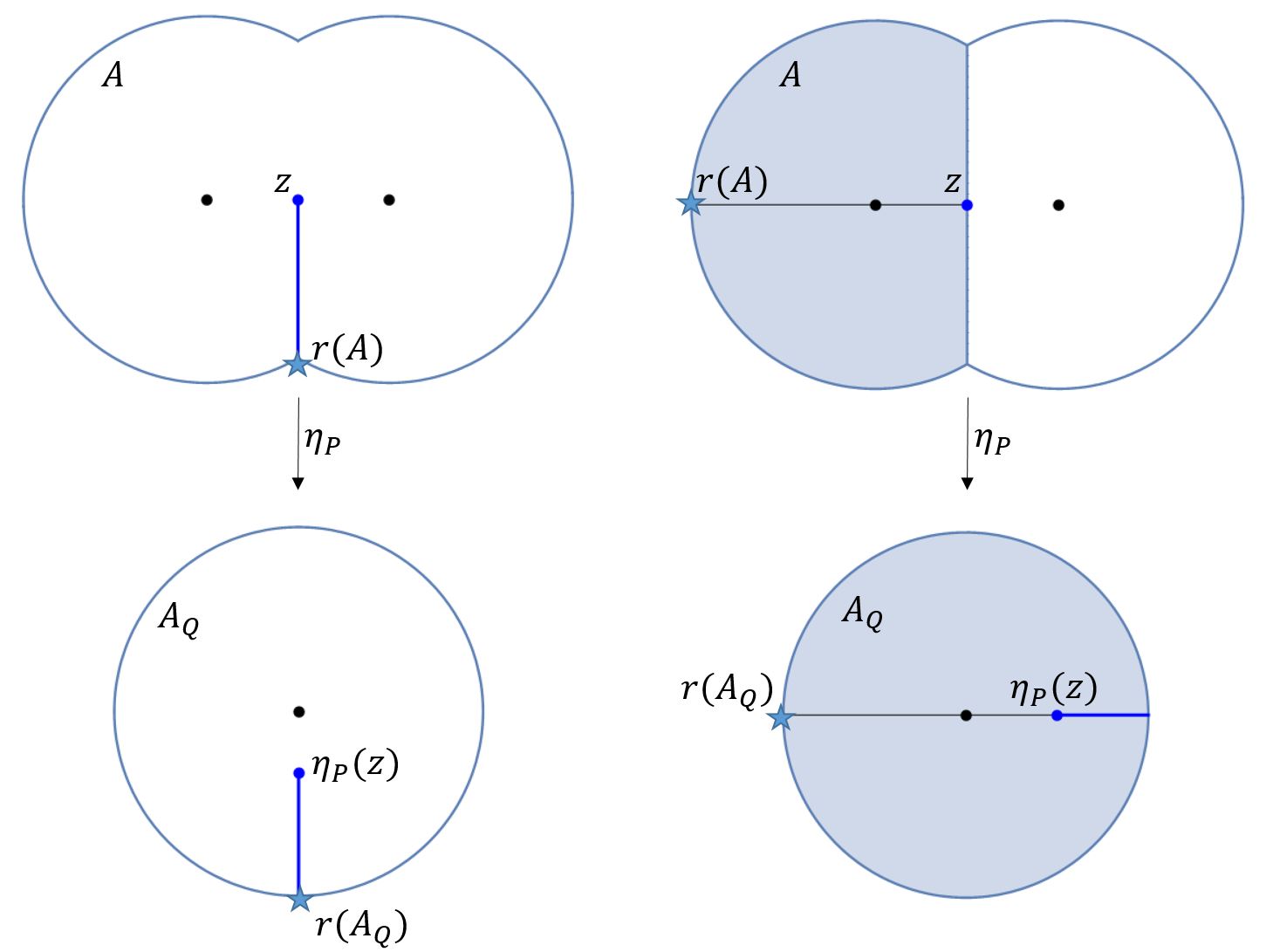}
  \caption{\small Left: case (1) of Theorem \ref{t:XPinbub}.
  Right: case (2) of Theorem \ref{t:XPinbub}.
  These illustrations are schematic (they show the right topology but not the right geometry of $A$ and $A_Q$).
  In both cases, the top figure shows $A$ and the bottom figure shows $A_Q$.
  The root points $r(A)$ and $r(A_Q)$ are marked by stars.
  Left: the set $X(P)\cap A$ is shown as the vertical segment connecting $r(A)$ with $z$,
   and the $\eta_P$-image of this set is shown as the vertical segment connecting $r(A_Q)$ with $\eta_P(z)$.
  Right: the set $X(P)\cap A$ is the shaded region of the top figure,
   and the $\eta_P$-image of it is the shaded region of the bottom figure.
  }\label{fig:53}
\end{figure}

\begin{proof}
Since $P^n:A\to P^n(A)$ is a homeomorphism mapping points of $Y(P)$ to points of $Y(P)$ and vice versa,
 it is enough to consider the case $c\in A$ (then $z=c$).
Since $A$ is open, it follows that $A\cap Y(P)\ne\0$.
Recall that the multi-angle of $A$ is the multi-angle of its root point $r(A)$.
Since it takes  two radii to pass through a bubble, we may assume that
$\vec\al=(\al_0,\dots,\al_{2k})$ is the multi-angle of $A$.
Then there are two cases: (1) the multi-angle of any point in $A\cap Y(P)$ looks like $(\vec\al.\al_{2k})$,
or (2) some points in $A$ have multi-angles $(\vec\al.\al_{2k})$ while others have multi-angles
$(\vec\al.\al_{2k}\al_{2k+1})$. Consider these cases separately.

(1) In this case, $c$ also has multi-angle $\vec\al.(\al_{2k})$.
Since all points $A\cap Y(P)$ have polar angle $\al_{2k}$ with respect to $A$,
 the set $\ol A\cap X(P)$ is the legal arc from the root point of $A$ to $c$.
It is also clear that $\eta_P$ is defined and continuous on this legal arc.
The $\eta_P$-image of $\ol A\cap X(P)$ is a legal arc in the closure of the bubble of $Q$ corresponding to $A$.

(2) Choose a point of $A\cap Y(P)$ with multi-angle $\vec\al.(\al_{2k},\al_{2k+1})$;
evidently, $\al_{2k+1}\ne\al_{2k}$. Set $B=P(A)$, then $B\subset Y(P)$.
Let us describe the $P$-image of $A\cap Y(P)$ as a subset of $B$.
Let $R$ be the internal ray of $B$ containing the critical value $P(c)$.
Then $P(A\cap Y(P))$ includes the center of $B$ and all internal rays of $B$ but $R$.
On $R$, a segment $T$ from $P(c)$ to the boundary of $B$ is not in $P(A\cap Y(R))$; other points of $R$ are in.
Call $T$ the \emph{special segment}.

The pullback of $T$ is an arc $T'\subset A$ that is the union of $\{c\}$
 and the two separatrices.
(Fig. \ref{fig:53}, top right, shows the arc $T'$ as the vertical segment through $z$.)
The arc $T'$ divides $A$ in two disjoint pullbacks of $B\sm T$, and $A\cap Y(P)$ is one of them.
The set $\ol A\cap X(P)$ contains a unique $P$-preimage of the center of $B$ and (initial segments of)
rays of all arguments emanating from this point; all rays but one extend to $\d A$, and one exceptional ray
crashes into $c$ and then splits into two branches (the separatrices).
Here by rays we mean integral curves of the radial vector field $v$ in $A$.
The set $\d(A\cap Y(P))\cap A$ 
 equals $T'$.
Clearly, the map $\eta_P$  extends to the separatrices.
The image $\eta_P(A\cap X(P))$ coincides with the entire bubble of $Q$ corresponding to $A$.
In this $Q$-bubble one radial segment (from $\eta_P(c)$ to the boundary of the bubble) is covered twice.
Otherwise, the map is one-to-one.
\end{proof}

Here, a \emph{terminal segment} of an internal ray of $A$ means a segment from some point in the ray to $\d A$.
Part $(2)$ of Theorem \ref{t:XPinbub} describes the map $\eta_P$ in the case $c\in X(P)\sm J(P)$.
Note that the map is not monotone in this case as it double folds an arc on the boundary of $X(P)$.

\subsection{A separation property}
Suppose that $W$ is a Siegel wedge of $P$.
Consider the corresponding wedge $W_Q$ in the dynamical plane of $Q$.
Such wedge is called \emph{$P$-adapted} (this notion depends on the choice of $P$).
Say that a $P$-adapted wedge $W_Q$ \emph{separates} a point $x$ from a point $x'$ if
 $x\in W_Q$ and $x'\notin \ol W_Q$.
This relation is symmetric: the wedge $\C\sm\ol W_Q$ is also $P$-adapted, and
 it separates $x'$ from $x$.

Note that, since $K(Q)$ is locally connected, any two points of $K(Q)$ can be connected by a legal arc.
The set $\eta_P(Y(P))\subset K(Q)$ is \emph{legal convex}, that is,
 any two points of this set can be connected by a legal arc lying entirely in this set.

\begin{lem}
  \label{l:regconv}
  The set $\ol{\eta_P(Y(P))}\subset K(Q)$ is legal convex.
\end{lem}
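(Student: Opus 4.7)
The plan is to reduce legal convexity of $\ol{\eta_P(Y(P))}$ to the following invariance statement:
\begin{equation*}
w\in\ol{\eta_P(Y(P))} \;\Longrightarrow\; I_w\subset\ol{\eta_P(Y(P))}.
\end{equation*}
Granting this, given $w_1,w_2\in\ol{\eta_P(Y(P))}$, Lemma \ref{l:polQ} produces unique legal arcs $I_{w_1},I_{w_2}\subset K(Q)$, which by the invariance statement both lie in $\ol{\eta_P(Y(P))}$. They share a maximal common initial segment terminating at a branching point $y$; concatenating $I_{w_1}$ reversed from $w_1$ to $y$ with $I_{w_2}$ from $y$ to $w_2$ produces a legal arc from $w_1$ to $w_2$ (bubble-by-bubble polar-angle constancy is inherited) lying in $\ol{\eta_P(Y(P))}$, as required.

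To prove the invariance, fix $w$ and a sequence $w^{(n)}\in\eta_P(Y(P))$ with $w^{(n)}\to w$. By legal convexity of $\eta_P(Y(P))$, each $I_{w^{(n)}}$ sits in $\eta_P(Y(P))$. It then suffices to show that every $z\in I_w$ is a limit of points $z^{(n)}\in I_{w^{(n)}}$, for that will force $z\in\ol{\eta_P(Y(P))}$.

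To build the approximants, I would exploit the bubble chain $\Delta(Q)=A_0,A_1,\dots$ of $w$. If $z\in\ol{A_k}$ for some finite $k$, the main claim is that for all sufficiently large $n$ the bubble chain of $w^{(n)}$ also passes through $A_0,\dots,A_k$. This follows from Lemma \ref{l:ubub} (whose Maximum-Modulus-Principle proof applies verbatim to $Q$) together with the observation that each transition point where $I_w$ crosses from $A_j$ to $A_{j+1}$ is an isolated iterated preimage of $1$, so the chain is determined by the topological position of $w^{(n)}$ relative to $\ol{A_0\cup\dots\cup A_k}$. Once the chains agree through $A_k$, I take $z^{(n)}\in I_{w^{(n)}}\cap\ol{A_k}$ having the polar angle of $z$ and polar radius converging to $\rho_z$; continuity of the Riemann map on $A_k$ (provided by the quasidisk structure of the bubbles) yields $z^{(n)}\to z$. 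If instead $z$ is the landing point of an infinite bubble ray along $I_w$, I first approximate $z$ by centers of late bubbles $A_k$ and then diagonalize in $n$.

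The main obstacle is this bubble-chain stabilization step: a priori, $w^{(n)}\to w$ could be reached by legal arcs traversing very different initial chains. The resolution is topological: the core curve of $A_0,\dots,A_k$ together with $\partial A_k$ separates the plane, and the component of $\C\sm(I_w\cap\ol{A_0\cup\dots\cup A_k})$ through which $I_w$ exits $\ol{A_k}$ toward $w$ is uniquely determined; convergence $w^{(n)}\to w$ forces $w^{(n)}$ to lie in this same component for $n$ large, hence the legal arc from $0$ to $w^{(n)}$ to traverse exactly the same sequence of transition points. Once this is in place, the remainder of the argument is routine continuity of polar coordinates on the fixed bubbles $A_0,\dots,A_k$.
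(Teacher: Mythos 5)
Your reduction is sound and in the same spirit as the paper's: the paper also boils the lemma down to continuity of legal arcs with respect to their endpoints (it states the general fact that the closure of a legal convex set is legal convex, citing that the legal arc from $x_n$ to $x'_n$ converges to the legal arc from $x$ to $x'$, without further detail). You fix one endpoint at $0$ and then concatenate at the branch point, which is a clean variant.

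The gap lies in the stabilization step. The phrase ``the component of $\C\sm(I_w\cap\ol{A_0\cup\dots\cup A_k})$ through which $I_w$ exits $\ol{A_k}$ toward $w$ is uniquely determined'' is vacuous: a compact arc does not separate the plane, so $\C\sm(I_w\cap\ol{A_0\cup\dots\cup A_k})$ is connected and its single component contains every $w^{(n)}$ automatically, giving no control over the chain. Likewise, the ``lollipop'' $(\text{core curve})\cup\partial A_k$ only separates the inside of $A_k$ from the outside, which again does not pin down the attachment point through which $I_{w^{(n)}}$ must exit $\ol{A_k}$. What you actually need is separation inside $K(Q)$ rather than in $\C$: the transition point $r(A_{k+1})$ is a cutpoint of $K(Q)$ (it is an iterated preimage of the critical point $1\in\partial\Delta(Q)$, which is a cutpoint of $K(Q)$ since $K(Q)$ is full and locally connected), and it separates $\ol{A_0\cup\dots\cup A_k}$ from $w$ when $w\notin\ol{A_0\cup\dots\cup A_k}$. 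Local connectivity then implies that the component of $K(Q)\sm\{r(A_{k+1})\}$ containing $w$ is open in $K(Q)$, so $w^{(n)}$ lies in it for large $n$, forcing $I_{w^{(n)}}$ to pass through $r(A_{k+1})$ and hence through $A_0,\dots,A_k$. With that replacement your argument goes through; as written, the crucial topological step does not.
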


\begin{proof}
  This follows from a more general observation: the closure of a legal convex set is a legal convex set.
Indeed, if $x_n$, $x'_n\in K(Q)$ are two sequences converging to $x$, $x'$, respectively,
then the legal arc from $x_n$ to $x'_n$ converges to the legal arc from $x$ to $x'$.
\end{proof}

Separation of points within the closure of the same bubble of $Q$ is given by the following lemma.

\begin{lem}
\label{l:sep-adapt}
Let $A_Q\ne\Delta(Q)$ be a bubble of $Q$, and $x\in\d A_Q\cap\ol{\eta_P(Y(P))}$ be a point different from the root point of $A_Q$.
Then, for any other point $x'\in\d A_Q$, there is a $P$-adapted wedge $W_Q$ with root point in the center of $A_Q$
 such that $W_Q$ separates $x$ from $x'$.
Any pair of different points in $\d\Delta(Q)$ is also separated by a $P$-adapted wedge unless
 $c\in\d\Delta(P)$ and $P^k(c)=1$ for some $k\ge 0$.
\end{lem}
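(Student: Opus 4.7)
The plan is to exhibit a Siegel wedge $W$ of $P$ whose associated $P$-adapted wedge $W_Q$ has root point at the center of $A_Q$ and separates $x$ from $x'$. First I identify the legal bubble $A$ of $P$ corresponding to $A_Q$ via the bubble correspondence of Definition \ref{d:leg}: since $x\in\ol{\eta_P(Y(P))}\cap \d A_Q$ with $x\neq r(A_Q)$, there are points of $Y(P)$ whose $\eta_P$-images lie in $A_Q$ arbitrarily close to $x$, so the multi-angle of $A_Q$ is realised by a legal bubble $A$ of $P$ with a well-defined center $o_A$. The legal arc $I_{o_A}$ from $0$ to $o_A$ exists and is stable by Lemma \ref{l:regarc-mov}, and polar coordinates on $\d A$ and $\d A_Q$ correspond.

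Next, let $\ta_x$ and $\ta_{x'}$ denote the polar angles of $x$ and $x'$. I pick two legal angles $\al_1$ and $\al_2$ of the form $-m_i\ta$, one in each open arc of $\R/\Z$ cut by $\{\ta_x,\ta_{x'}\}$. Such angles are dense, and only two countable families of angles are forbidden: (a) those for which the $P$-bubble ray continuing the internal ray of $A$ at angle $\al_i$ passes through $c$ (so fails to be legal, cf. Proposition \ref{p:ma-bub}), and (b) those for which its landing point is parabolic rather than repelling. Hence admissible choices always exist. On the $Q$-side, the analogous bubble rays at the same angles automatically land at repelling periodic points by Theorem \ref{t:land} together with the absence of parabolic cycles of $Q$ under the bounded type hypothesis (Theorem \ref{t:Pe}), and from these landing points extend unique external rays of rational argument. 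This produces Siegel rays $\Sigma_i^P$ of $P$ and their $\eta_P$-images $\Sigma_i^Q$ of $Q$ that branch off at $o_A$ and $o_{A_Q}$ respectively and bound a Siegel wedge $W$ of $P$ with associated wedge $W_Q$ of $Q$.

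By construction the exit points $y_i\in\d A_Q$ of $\Sigma_i^Q$ have polar angles $\al_i$, one in each arc of $\d A_Q$ between $x$ and $x'$, so $x$ and $x'$ lie in opposite components of $\C\sm(\Sigma_1^Q\cup\Sigma_2^Q)$; taking $W_Q$ to be the component containing $x$ gives $x\in W_Q$ and $x'\notin\ol W_Q$, and $W_Q$ is $P$-adapted by construction. The case $A_Q=\Delta(Q)$ is handled identically with $A=\Delta(P)$, $o_A=0$ and the common initial segment reducing to $\{0\}$; the only novelty is that the polar angles available on $\d\Delta(P)$ are exactly the orbit $\{-m\ta\mid m\ge 0\}$, and admissibility requires this set to avoid the polar angles of the iterates of $c$ lying on $\d\Delta(P)$. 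If $c\notin\d\Delta(P)$ there is nothing to check; if $c\in\d\Delta(P)$, the iterates of $c$ have polar angles $\ta_c+k\ta$, which belong to $\{-m\ta\}$ exactly when $\ta_c=-m_0\ta$, i.e., $P^{m_0}(c)=1$ --- the stated exception.

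The main obstacle I anticipate is step two: verifying rigorously that for each admissible $\al_i$ the $P$-side periodic bubble ray continuing the internal ray of $A$ has the prescribed combinatorial structure and lands at a repelling periodic point. This calls for Theorem \ref{t:land} to guarantee a periodic landing, Theorem \ref{t:S-stab} to control it under perturbation, and the Fatou--Shishikura inequality to ensure that at most finitely many periodic bubble rays land at parabolic points. Once this is established, identifying the resulting $W$ with a Siegel wedge and $W_Q$ with a $P$-adapted wedge, as well as isolating the exceptional locus, are routine consequences of the bubble correspondence.
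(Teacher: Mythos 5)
Your overall strategy --- choose two admissible ``exit directions'' from the center of $A_Q$, one in each arc of $\d A_Q$ cut by $x$ and $x'$, extend them to periodic Siegel rays, and take the wedge they bound --- is the same as the paper's. The genuine gap is in the step ``Such angles are dense, and only two countable families of angles are forbidden. Hence admissible choices always exist.'' This is not a valid deduction: the candidate set of legal angles $\{-m\ta\mid m\ge 0\}$ is itself countable, so removing a countable forbidden family could in principle remove everything in one (or both) of the two arcs. This is not a hypothetical worry: the exceptional clause in the second half of the lemma is precisely a situation (namely $c\in\d\Delta(P)$ with $P^k(c)=1$) in which the forbidden set is co-finite in the candidate set and the separation fails. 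So you must actually prove that admissible exit points occur in \emph{each} arc of $\d A_Q\sm\{x,x'\}$. The paper's key input here, which your proof never invokes, is Theorem \ref{t:XPinbub}: the hypothesis $x\in\d A_Q\cap\ol{\eta_P(Y(P))}$ with $x\ne r(A_Q)$ forces either $\ol A\subset Y(P)$ or case (2) of that theorem, whence $\eta_P(\ol A\cap Y(P))$ covers all of $\ol{A_Q}$ except at most a subarc of a single internal ray; in particular, all but at most one of the (dense) root points of bubbles attached to $A_Q$ lie in $\eta_P(Y(P))$. That statement is what makes the selection of the two exit points $b$, $b'$ possible, and it is the place where the hypothesis on $x$ is actually used; without it you have no control over how much of $\d A_Q$ is accessible from $Y(P)$. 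The same structural fact (the unique preimage of the center in the sector) is also what guarantees that the center $o_A$ you branch at is defined when $A$ is critical or precritical.

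Two smaller remarks. Excluding angles whose $P$-side bubble rays land at parabolic points is unnecessary: by the paper's definition a Siegel ray is allowed to land at a parabolic non-rotational periodic point, and on the $Q$-side the landing points are automatically repelling since $Q$ has no parabolic cycles. Also, both you and the paper still owe a short argument that the chosen periodic continuations consist entirely of legal bubbles of $P$ (via Proposition \ref{p:ma-bub}, this amounts to choosing the periodic part of the multi-angle so that its $\Pi$-orbit avoids the multi-angle of $c$); you correctly flag this as the remaining obstacle, but it is secondary to the gap above.
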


\begin{proof}
Suppose first that $A_Q\ne\Delta(Q)$ and $x\ne r(A_Q)$ is a point of $\d A_Q\cap\ol{\eta_P(Y(P))}$.
The legal arc $I_x$ in $K(Q)$ from $0$ to $x$ lies in $\ol{\eta_P(Y(P))}$, by Lemma \ref{l:regconv}.
By Lemma \ref{l:XPbub} and since $x$ is not the root point of $A_Q$,
 the bubble $A_Q$ corresponds to some legal bubble $A$ of $P$.
Since $x\in\d A_Q\cap\ol{\eta_P(Y(P))}$ is not equal to $r(A_Q)$, then case (2) of
Theorem \ref{t:XPinbub} holds. Hence the $\eta_P$-image of $\ol A\cap Y(P)$ is $\ol A_Q$ except for
 at most a subarc of $\ol R$ (where $R$ is an internal ray of $A_Q$) not reaching the center of $A_Q$.
Points of $\d A_Q$ that are root points of other bubbles attached to $A_Q$ are dense in $\d A_Q$.
All these points except possibly one are in $\eta_P(Y(P))$.
Therefore, there is a pair of such points $b$, $b'$ in $\d A_Q$ that separate $x$ from $x'$.
The legal arcs from the center $a$ of $A_Q$ to $b$ and $b'$ can be extended to periodic Siegel rays.
Moreover, there are periodic Siegel rays $\Ga_b$ and $\Ga_{b'}$ for $P$ such that
 the wedge $W$ bounded by $\Ga_b$ and $\Ga_{b'}$ corresponds to a wedge $W_Q$
 whose boundary intersects $\d A_Q$ at points $b$ and $b'$.
Thus there is a $P$-adapted wedge $W_Q$ that separates $x$ from $x'$, as desired.

Suppose now that $x$, $x'\in\d\Delta(Q)$ but either $c\notin\d\Delta(P)$ or
 $c\in\d\Delta(P)$ is never mapped to $1$ under iterates of $P$.
Then there are two iterated preimages $b$, $b'$ of $1$ separating $x$ and $x'$ in $\d\Delta(Q)$.
The corresponding points of $\d\Delta(P)$ are root points of legal bubbles attached to $\Delta(P)$,
 and the same argument as above works.
\end{proof}

The following is a more general separation property.

\begin{prop}
  \label{p:sep-w}
  A pair of distinct points $x$, $x'\in J(Q)\cap\ol{\eta_P(Y(P))}$ is separated by a $P$-adapted wedge,
  except when both $x$ and $x'$ are in $\d\Delta(Q)$, and $c\in\d\Delta(P)$ is eventually mapped to $1$.
\end{prop}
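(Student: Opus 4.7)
The plan is to reduce the separation of $x$ and $x'$ to an application of Lemma \ref{l:sep-adapt} at a carefully chosen bubble determined by the legal arcs from $0$ to $x$ and to $x'$. By Lemma \ref{l:polQ}, there exist unique legal arcs $I_x$, $I_{x'}\subset K(Q)$ from $0$ to $x$ and $x'$, respectively; by Lemma \ref{l:regconv}, both arcs lie in $\ol{\eta_P(Y(P))}$. Let $y$ be the point where $I_x$ and $I_{x'}$ branch off, i.e.\ the endpoint of their common initial segment. The two cases below depend on the position of $y$.

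Suppose first that $y$ lies in the interior of some bubble $A_Q$. Since legal arcs inside a bubble consist of radial segments and the internal ray of a given polar angle is unique, $y$ must coincide with the center $o_{A_Q}$. Beyond $y$, the arcs $I_x$ and $I_{x'}$ run along distinct internal rays and exit $A_Q$ at distinct boundary points $z_1,z_2\in\partial A_Q\cap\ol{\eta_P(Y(P))}$, both different from $r(A_Q)$ (where the common segment entered $A_Q$). Lemma \ref{l:sep-adapt} applied to $A_Q$ with the pair $(z_1,z_2)$ produces a $P$-adapted wedge $W_Q$ rooted at $o_{A_Q}$ that separates $z_1$ from $z_2$ in $\partial A_Q$. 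Since $I_x$ and $I_{x'}$ cannot cross either bounding Siegel ray of $W_Q$ (each such ray is a legal arc continued by an external ray, and legal arcs through a common point are unique by Lemma \ref{l:ubub}), the continuations of $I_x$ past $z_1$ and of $I_{x'}$ past $z_2$ remain on opposite sides of $\ol{W_Q}$, so $W_Q$ separates $x$ from $x'$ in $\C$. The only obstruction is $A_Q=\Delta(Q)$ under the exception of Lemma \ref{l:sep-adapt}, and this matches exactly the exception allowed in the proposition.

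If instead $y$ lies on the boundary of a bubble (necessarily at an iterated preimage of $1$), then $I_x$ and $I_{x'}$ enter distinct adjacent bubbles $A_Q^x$ and $A_Q^{x'}$ at $y$, with $r(A_Q^x)=r(A_Q^{x'})=y$. I would pick a point $\xi\in\partial A_Q^x\cap\ol{\eta_P(Y(P))}$ with $\xi\ne y$ lying on the side of $A_Q^x$ reached by the continuation of $I_x$; for instance, $\xi$ can be the next exit point of $I_x$ from $A_Q^x$, or, failing that, the root of any bubble attached to $A_Q^x$ sufficiently close to $I_x$ on the correct side (such roots are dense in $\partial A_Q^x$). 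Lemma \ref{l:sep-adapt} applied to $A_Q^x$ with the pair $(\xi,y)$ produces a wedge $W_Q$ rooted at $o_{A_Q^x}$ with $\xi\in W_Q$ and $y\notin\ol{W_Q}$. Because $A_Q^{x'}$ meets $A_Q^x$ only at the single point $y$, the whole bubble $A_Q^{x'}$, and in particular $x'$, lies outside $\ol{W_Q}$, while $x$ lies on the same side as $\xi$. The degenerate subcase $y\in\{x,x'\}$, where one arc is an initial segment of the other, is handled identically by working inside the first bubble past $y$.

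The main technical hurdle will be this plane-separation verification: Lemma \ref{l:sep-adapt} only gives separation within $\partial A_Q$, and one must promote this to separation in $\C$. The key ingredients are the uniqueness of legal arcs (Lemma \ref{l:ubub}), which prevents $I_x$ or $I_{x'}$ from crossing a bounding Siegel ray, and in the boundary-branching case, the fact that $A_Q^{x'}$ is joined to $A_Q^x$ at the single point $y$. Matching the proposition's exception to that of Lemma \ref{l:sep-adapt} then reduces to the observation that it can only arise in the interior case with $A_Q=\Delta(Q)$, i.e.\ when both $x$ and $x'$ lie in $\partial\Delta(Q)$ and the critical point $c\in\partial\Delta(P)$ is eventually mapped to $1$.
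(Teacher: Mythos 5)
Your overall strategy (apply Lemma \ref{l:sep-adapt} inside a bubble determined by the branch point of $I_x$ and $I_{x'}$) is sound, and your Case~1 is essentially the paper's Case~A recast in terms of the branch point. However, there are two genuine problems.

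\textbf{Case~2 as stated is vacuous and rests on a wrong picture of $K(Q)$.} In the quadratic dynamical plane, at an iterated preimage $y$ of $1$ lying on $\d A_Q$, exactly \emph{one} further bubble $B$ is attached, namely the one with $r(B)=y$: the pullback of the local picture $\ol\Delta(Q)\cup\ol{A_1}$ at $1$ along the non-critical orbit of $y$ gives one component ``before'' $y$ (the bubble $A_Q$) and one component ``after'' $y$ (the bubble $B$), and no others. So if both $I_x$ and $I_{x'}$ were to continue past $y$, they would both go into $B$ and share at least the segment from $y$ to $o_B$, contradicting $y$ being the branch point. In other words, if $y\ne o_{A_Q}$ for every bubble $A_Q$, then $y\in\{x,x'\}$. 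Your ``distinct adjacent bubbles $A_Q^x$, $A_Q^{x'}$ both rooted at $y$'' cannot occur in $Q$. (You may be importing the cubic picture, where iterated preimages of a boundary critical point $c$ can be the root of two bubbles; that picture is not available for $Q$.) The degenerate subcase you tack on at the end is in fact the \emph{only} non-center case, and it is handled too tersely: it should be the second main case, with the wedge found inside the unique bubble $B$ past $y$ using the pair $(\xi, y)$ where $\xi$ is the exit point of $I_{x'}$ from $B$.

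\textbf{The exception matching at the end is not correct as stated.} You claim the only obstruction is ``$A_Q=\Delta(Q)$ under the exception of Lemma \ref{l:sep-adapt}'' and that this matches the proposition's exception, ``both $x,x'\in\d\Delta(Q)$''. But in Case~1 with $A_Q=\Delta(Q)$ (i.e.\ $I_x\cap I_{x'}=\{0\}$), the exit points $z_1,z_2$ lie in $\d\Delta(Q)$ even when $x$ or $x'$ does not; if $c\in\d\Delta(P)$ is eventually mapped to $1$, Lemma \ref{l:sep-adapt} cannot separate $z_1$ from $z_2$, yet the proposition promises a separating wedge because its exception requires \emph{both} endpoints to lie in $\d\Delta(Q)$. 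The fix is to not insist on separating the exit points from $\Delta(Q)$: if, say, $x\notin\d\Delta(Q)$, move one level further along $I_x$ to the next bubble $A'$ (with $r(A')=z_1\ne 0$, $A'\ne\Delta(Q)$) and apply the first (exception-free) part of Lemma \ref{l:sep-adapt} there, separating $r(A')$ from the next exit point. This is what the paper does by choosing, in its Case~A, a bubble $A_Q$ with $x\in\ol{A_Q}$ and $I\cap A_Q\ne\emptyset$ rather than always working at the branch point, and in its Case~B by choosing a bubble of $\mathcal A_Q$ that is not in $\mathcal A'_Q$.

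For comparison, the paper's decomposition is by the position of $x$ and $x'$ (whether or not they lie on bubble boundaries), while yours is by the position of the branch point $y$; both reduce to the same lemma, but the paper's choice of bubble automatically avoids the $\Delta(Q)$-exception pitfall when at most one of $x,x'$ is on $\d\Delta(Q)$.
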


\begin{proof}
First assume that for some bubble $A_Q$ the point $x$ belongs to
$\ol{A_Q}$ and the legal arc $I$ from $x$ to $x'$ intersects $A_Q$.
Let $a$ be the center of $A_Q$.
Let $x''$ be the point of $\d A_Q$, where $I$ intersects $\d A_Q$; we necessarily have $x\ne x''$.
Lemma \ref{l:sep-adapt} is applicable to $x$ and $x''$ since at least one of these two points is different from $r(A_Q)$.
By Lemma \ref{l:sep-adapt}, there is a $P$-adapted wedge $W_Q$ with root point $a$ separating $x$ from $x''$.
Then $W_Q$ will also separate $x$ from $x'$ since the legal arc from $x'$ to $x''$ cannot intersect $\d W_Q$.

If neither $x$ nor $x'$ belongs to the boundary of a bubble,
then there are bubble rays $\Ac_Q$ and $\Ac'_Q$ (not necessarily periodic) landing at $x$ and $x'$, respectively
(since $J(Q)$ is locally connected, then any bubble ray lands).
Since $x\in\ol{\eta_P(Y(P))}$, it follows that there are infinitely many bubbles in $\Ac_Q$ intersecting $\eta_P(Y(P))$.
Then in fact all bubbles in $\Ac_Q$ intersect $\eta_P(Y(P))$, by Lemma \ref{l:XPbub}.
It follows that there is a bubble ray $\Ac$ for $P$ corresponding to $\Ac_Q$
 (recall that, by definition, a bubble ray for $P$ consists of legal bubbles).
Similarly, there is a bubble ray $\Ac'$ for $P$ corresponding to $\Ac'_Q$.
Take a bubble $B_Q\in \Ac_Q$ but not in $\Ac'_Q$.
By the above, $B_Q$ corresponds to some legal bubble $B$ of $P$.
Therefore, there exists a $P$-adapted wedge $W_Q$ separating $x$
 (equivalently, the point where the legal arc from the center of $B_Q$ to $x$ intersects $\d B_Q$)
 from the root point of $B_Q$.
Then $W_Q$ also separates $x$ from $x'$.
\end{proof}

 \subsection{Continuous extension of $\eta_P$}
In this section, we complete the proof of the following theorem.

\begin{thm}
  \label{t:etaPcont}
  The map $\eta_P:Y(P)\to K(Q)$ extends to a continuous map $\eta_P:X(P)\to K(Q)$.
Unless $c\in X(P)\sm J(P)$, this map is monotone.
\end{thm}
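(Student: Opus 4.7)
The plan is to extend $\eta_P$ in three stages, then verify continuity by a wedge-separation argument, and finally to verify monotonicity under the hypothesis $c\notin X(P)\sm J(P)$ using the same separation philosophy in reverse.

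First I would extend $\eta_P$ from $Y(P)$ to $X(P)$ piece by piece. On the closure of any legal bubble $A$ of $P$ that meets $X(P)\sm Y(P)$ (i.e., whose forward orbit contains $c$), Theorem \ref{t:XPinbub} already gives the extension and describes its image. For a point $x\in\ol{Y(P)}$ not lying in any bubble closure, I pick $y_n\in Y(P)$ with $y_n\to x$; the bubble chains to $y_n$ necessarily have strictly growing generation, and by the bubble correspondence (Definition \ref{d:leg}) together with uniqueness of bubble chains/rays to a point (Lemma \ref{l:ubub}), these pull over, bubble by bubble, to a bubble ray $\Ac_Q$ in $K(Q)$. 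By local connectedness of $J(Q)$ (Theorem \ref{t:Pe}), $\Ac_Q$ lands at a unique point $w\in K(Q)$, and I set $\eta_P(x)=w$; independence of the choice of the approximating sequence is another application of Lemma \ref{l:ubub}. Finally, if $X(P)$ strictly contains $\ol{Y(P)}$, then $X(P)\sm\ol{Y(P)}$ is a union of immediate parabolic basins attached to the unique parabolic cycle points $a_i\in\ol{Y(P)}$, and I extend $\eta_P$ by declaring it constant equal to $\eta_P(a_i)$ on the closure of each such basin. The semi-conjugacy $\eta_P\circ P=Q\circ\eta_P$ on $Y(P)$ then extends to $X(P)$ by density.

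Next I would prove continuity. Inside a bubble closure continuity follows from Theorem \ref{t:XPinbub} combined with Corollary \ref{c:sup-conv}. On the parabolic basins continuity is immediate from the collapse definition. The main case is a point $x\in X(P)$ not lying in any single bubble closure: assume $x_n\to x$ but the images fail to converge to $\eta_P(x)$. Passing to subsequences, I get two distinct accumulation points $w\ne w'$ in $\ol{\eta_P(Y(P))}\subset K(Q)$. By Proposition \ref{p:sep-w}, $w$ and $w'$ are separated by a $P$-adapted wedge $W_Q$ (the pathological exception, $c\in\d\Delta(P)$ eventually mapped to $1$, is handled separately via a direct argument on $\ol\Delta(P)$ using Corollary \ref{c:equicont}). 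By definition of $P$-adaptedness, $W_Q$ is the corresponding wedge of a Siegel wedge $W$ of $P$ whose bounding Siegel rays are stable by Theorem \ref{t:S-stab}. Then $W$ eventually separates two tail subsequences of $(x_n)$, contradicting $x_n\to x$ to a single point.

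For monotonicity, I assume $c\notin X(P)\sm J(P)$, which rules out case (2) of Theorem \ref{t:XPinbub}: the map $\eta_P$ is then injective in restriction to each bubble closure intersected with $X(P)$, and correspondingly the bubble correspondence $A\mapsto A_Q$ is injective (distinct bubbles of $P$ cannot share their image bubble). Suppose $z,z'\in\eta_P^{-1}(w)$ are distinct points lying in different path-components of $\eta_P^{-1}(w)\subset X(P)$. I claim they must then be separated in $K(P)$ by a Siegel wedge $W$ of $P$: if $w$ is in the closure of a bubble $A_Q$ of $Q$, both $z,z'$ lie in $\ol A$ for the (unique) corresponding $A$ and injectivity on $\ol A$ gives $z=z'$, a contradiction; otherwise $w$ is a limit of bubble rays and the analysis of the extension shows $\eta_P^{-1}(w)$ consists of landing sets of (at most finitely many) corresponding bubble rays and possibly a parabolic basin closure, each of which is connected, so any separation of components can be realized by a Siegel wedge cutting the plane of $P$ between them. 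But such $W$ then corresponds to a $P$-adapted wedge $W_Q$ of $Q$ that separates $\eta_P(z)=w$ from $\eta_P(z')=w$, which is absurd. Hence $\eta_P^{-1}(w)$ is connected, proving monotonicity.

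The main obstacle I anticipate is executing the monotonicity step cleanly at boundary points of the Siegel disk of $Q$ and at landing points shared by several bubble rays: one must verify that in each of these geometric configurations the preimage really is a single Siegel-convex piece (or collapses under the parabolic basin identification), so that the wedge-separation contradiction applies. The parabolic case and bookkeeping around $\d\Delta(Q)$ (where the critical point $1$ is on the boundary) are the most delicate parts.
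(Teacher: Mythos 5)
Your continuity argument matches the paper's in substance: extend bubble-by-bubble via Theorem~\ref{t:XPinbub}, collapse parabolic basins, and for the main case use Proposition~\ref{p:sep-w} to separate two putative limit images by a $P$-adapted wedge $W_Q$ and pull the contradiction back to $K(P)$ through the corresponding Siegel wedge. (One cosmetic remark: it is cleaner to approximate $y\in\partial Y(P)\setminus Y(P)$ by points $y_n\in Y(P)$, as the paper does, so that $\eta_P(y_n)$ is already defined and the extension is literally the unique subsequential limit, rather than first constructing the limit bubble ray in $K(Q)$ and then separately arguing continuity.)

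The monotonicity part, however, has a real gap. You assert that if $z,z'$ lie in different path-components of the fiber $\eta_P^{-1}(w)$, then ``any separation of components can be realized by a Siegel wedge cutting the plane of $P$ between them.'' This is precisely what needs to be proved, not assumed: the fiber is a closed, a priori quite wild, subset of $X(P)$, and it is not clear why a separation between two of its components can always be achieved by a single Siegel wedge (whose boundary is a specific curve made of two Siegel rays). Your description of the fiber as ``landing sets of at most finitely many corresponding bubble rays and possibly a parabolic basin closure'' is also not established and is not needed by the paper. The paper's argument avoids this entirely: for the non-exceptional case it considers the set $Z_Q$ given by the intersection of \emph{all} $P$-adapted wedges containing $x_Q$, shows $Z_Q\cap J(Q)=\{x_Q\}$ via Proposition~\ref{p:sep-w}, notes that the preimage $Z=\eta_P^{-1}(Z_Q\cap K(Q))$ differs from the fiber by at most one internal ray per bubble, and then expresses $Z=X(P)\cap\bigcap_n U_n$ as a nested intersection of finite intersections of Siegel wedges. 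Connectedness of $Z$ then follows from Lemma~\ref{l:intW} (nonempty finite intersections of Siegel wedges are connected and meet $X(P)$ in a connected set), which you do not invoke and which is the key structural input for this step.

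You also dismiss the exceptional case ($x_Q\in\partial\Delta(Q)$ with $c\in\partial\Delta(P)$ eventually mapped to $1$) too quickly: you mention handling it ``separately via a direct argument on $\ol\Delta(P)$'' only in the continuity discussion, but in the monotonicity part you do not address it at all. In the paper this case requires a genuinely separate and nontrivial continuum argument (the construction of the auxiliary continuum $C^*_y$ avoiding all bubbles), since Proposition~\ref{p:sep-w} simply fails there and the wedge machinery is unavailable. Omitting this leaves the monotonicity statement unproved in exactly the most delicate regime you flag at the end as a concern.
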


The extended map is denoted by the same letter.

\begin{proof}[Proof of Theorem \ref{t:etaPcont}, definition of $\eta_P$ and its continuity]
We start by proving that $\eta_P$ extends continuously to $\ol{Y(P)}$.
  Take $y\in\d Y(P)\sm Y(P)$; by Theorem \ref{t:XPinbub}, it is enough to assume that $y$ is not in a bubble.
We need to prove that, for all sequences $y_n\in Y(P)$ converging to $y$,
 the images $\eta_P(y_n)$ converge to the same limit.
Assume the contrary: $y_n$, $y'_n\in Y(P)$ are two sequences converging to $y$ such that
 $$
 \lim_{n\to\infty} \eta_P(y_n)=x\ne x'=\lim_{n\to\infty} \eta_P(y'_n).
 $$
Set $x_n=\eta_P(y_n)$ and $x'_n=\eta_P(y'_n)$.

By Proposition \ref{p:sep-w}, there is a $P$-adapted wedge $W_Q$ in the dynamical plane of $Q$
 that separates $x$ from $x'$ so that $x\in W_Q$ and $x'\notin\ol W_Q$.
Since $W_Q$ is open, $x_n\in W_Q$ for all large $n$.
By definition of a $P$-adapted wedge, $W_Q$ corresponds to some legal wedge $W$ for $P$.
Thus, $y_n\in W$ for large $n$, and 
 these $y_n$ are in some compact subset of $W$.
It follows that $y\in W$, hence also $y'_n\in W$ for large $n$.
We conclude that $x'_n\in W_Q$ for large $n$, therefore, $x'\in\ol W_Q$, a contradiction.

Suppose now that $X(P)\ne\ol{Y(P)}$, that is, there is a parabolic cycle in $\ol{Y(P)}$.
Every point $z\in X(P)\sm\ol{Y(P)}$ belongs to the closure of a parabolic domain at a parabolic point $a_z$.
Set $\eta_P(z)=\eta_P(a_z)$.
The extension thus defined is continuous.
\end{proof}

As usual, \emph{fibers} of $\eta_P$ are defined as preimages of points under $\eta_P$.
We now address the issue of connectedness of fibers.

\begin{lem}
  \label{l:intW}
A nonempty intersection of finitely many Siegel wedges for $P$ is connected and has a connected intersection with $X(P)$.
\end{lem}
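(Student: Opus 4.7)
The plan is as follows. For the base case of a single Siegel wedge $W$, the wedge is a Jordan domain in $S^2$ and hence connected. To show $W \cap X(P)$ is connected, invoke Proposition~\ref{p:SW-ma}: every point of $W \cap Y(P)$ has a multi-angle extending that of $W$, so the legal arc from $0$ passes through the root point $b$ of $W$ (or along $I_b$ when $W$ has empty multi-angle) and lies in $\overline{W}$ thereafter. Thus $W \cap Y(P)$ is path-connected through $b$ within $\overline{W}$; the inclusion $W \cap \overline{Y(P)} \subset \overline{W \cap Y(P)}$ (which holds because $W$ is open) gives connectedness of $W \cap \overline{Y(P)}$; and any immediate parabolic basin component within $W$ attaches to $\overline{Y(P)}$ at a parabolic periodic point on its closure, preserving connectedness.

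For $n \ge 2$ wedges $W_1, \ldots, W_n$ with $V := \bigcap_i W_i \ne \emptyset$, I would consider the planar graph $G := \bigcup_i \partial W_i \subset S^2$. Each bounding Siegel ray passes through both $0$ and $\infty$, so $G$ is a finite connected planar graph, and $S^2 \setminus G$ decomposes into finitely many Jordan-domain faces; $V$ is a union of some of these. Assuming $V \cap Y(P) \ne \emptyset$, Proposition~\ref{p:SW-ma} forces the multi-angles of the $W_i$ to form a chain (they are pairwise comparable as initial segments of a common multi-angle), so I single out the ``deepest'' wedge $W_{i_0}$ with root $b_{i_0}$ and longest multi-angle. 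Every legal arc landing in $V \cap Y(P)$ passes through $b_{i_0}$ and thereafter lies in $\overline{W_i}$ for every $i$, hence in $\overline{V}$, so $V \cap Y(P)$ is path-connected through $b_{i_0}$. The alternative case $V \cap Y(P) = \emptyset$ forces $V$ to lie within a single parabolic basin, where connectedness is automatic.

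The main obstacle is to deduce that $V$ itself is connected, not merely $V \cap Y(P)$. My plan is to apply Janiszewski's theorem in $S^2$ iteratively: setting $A_i := S^2 \setminus W_i$ (a closed Jordan disk whose boundary passes through $0$ and $\infty$), we have $V = S^2 \setminus \bigcup_i A_i$, which reduces connectedness of $V$ to an inductive control of connectedness of pairwise intersections $A_i \cap A_j$. Alternatively, one can try to identify $V$ combinatorially as the single face of $S^2 \setminus G$ adjacent to $b_{i_0}$ on the appropriate side, using the tree structure of Siegel rays — they branch only at iterated preimages of $0$ or at landing points of bubble rays and extend along external rays at the far end — to rule out $V$ splitting into faces with separate ``channels'' toward $\infty$. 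Once $V$ is known connected, connectedness of $V \cap X(P)$ follows by essentially the base-case argument applied with $b_{i_0}$ as the common trunk.
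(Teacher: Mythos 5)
Your route to the connectedness of $V\cap X(P)$ is essentially the paper's: both arguments hinge on Proposition~\ref{p:SW-ma}, which forces the multi-angles of the $W_i$ to form a chain as soon as $V\cap Y(P)\ne\0$, and both then route a legal arc from any point of $V\cap Y(P)$ back through the root of the deepest wedge (your $b_{i_0}$, the paper's $b_U$) or to $0$, the arc staying in $Y(P)$. The paper handles all $n$ uniformly rather than splitting off a base case, but the substance is the same. For the connectedness of $V$ itself, however, the paper simply declares ``clearly, $U$ is an unbounded connected and simply connected domain whose boundary is a union of finitely many quasi-chords,'' resting implicitly on the tree structure of Siegel rays (their restrictions to $K(P)$ all emanate from $0$, share initial segments, and branch without crossing, so the boundary quasi-chords never cross transversally). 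You correctly flag this as the main obstacle, but your two proposed routes --- iterated Janiszewski and identification of $V$ with a single face of the graph $G$ --- are both left as plans. This is the genuine gap: for the Janiszewski route you must control connectedness of $A_i\cap\bigcup_{j<i}A_j$ (not merely the pairwise $A_i\cap A_j$), and for the face route you must rule out $V$ consisting of several faces of $G$ joined only at $\infty$ or along a common boundary arc. Neither verification is supplied, and neither is obviously immediate, even if the paper's ``clearly'' suggests the authors regard both as folklore.

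A second, smaller point: your assertion that ``the alternative case $V\cap Y(P)=\emptyset$ forces $V$ to lie within a single parabolic basin'' is false as written, since $V$, being a nonempty intersection of Siegel wedges, is unbounded and cannot sit inside any bounded Fatou component. You presumably mean $V\cap X(P)$; but even then one has to argue why $V$ cannot meet several parabolic domains while avoiding $\ol{Y(P)}$. The useful observation is that Siegel rays are disjoint from parabolic domains, so each parabolic domain lies entirely inside or entirely outside every $W_i$; one can then show that a parabolic domain $D\subset V$ drags part of $\ol{Y(P)}$ (hence of $Y(P)$, by openness of $V$) into $V$, contradicting $V\cap Y(P)=\emptyset$. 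The paper elides this edge case as well, so you are in good company, but the sentence as you wrote it is incorrect.
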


\begin{proof}
Define a \emph{quasi-chord} in $\C$ as the image of $\R$ under some proper
 topological embedding into $\C$.
Then a quasi-chord divides the plane $\C$ into two open unbounded regions.
Consider a Siegel wedge $W$ bounded by two Siegel rays $\Sigma$, $\Sigma'$.
Let $b$ be the root point of $W$, that is, the point where $\Sigma$ and $\Sigma'$ branch off.
The boundary of $W$ is a quasi-chord
 containing $b$ as well as pieces of $\Sigma$ and $\Sigma'$ connecting $b$ to infinity.
Define a \emph{Siegel quasi-chord} as a quasi-chord in $\C$ that is the boundary of some Siegel wedge.

Suppose now that $U$ is a nonempty intersection of finitely many Siegel wedges.
Then, clearly, $U$ is an unbounded connected and simply connected domain
 whose boundary is a union of finitely many quasi-chords.
Each boundary quasi-chord of $U$ has its own root point.
Either $0\in\ol U$, or there is a unique boundary quasi-chord of $U$ whose root point $b_U$
 has multi-angle different from $()$.
The lemma follows from the observation that any point of $Y(P)\cap U$ can be connected
 to $0$ or $b_U$ by a legal arc lying in $Y(P)$.
\end{proof}

\begin{proof}[Proof of Theorem \ref{t:etaPcont}, the monotonicity part]
Take $x_Q\in\eta_P(X(P))$; consider the fiber $\eta_P^{-1}(x_Q)$.
If $x_Q$ is in a bubble of $Q$, then the fiber of $x_Q$ is a singleton by Theorem \ref{t:XPinbub}.
Thus we may assume that $x_Q\in J(Q)$.
First suppose that $x_Q\in\d\Delta(Q)$ and $c\in\d\Delta(P)$ is eventually mapped to $1$.
Note that $\eta_P:\d\Delta(P)\to\d\Delta(Q)$ is a homeomorphism.
Hence there is a unique point $x\in\d\Delta(P)$ such that $\eta_P(x)=x_Q$.
Suppose that some point $y\notin\ol\Delta(P)$ is mapped to $x_Q$.
Since $y\in X(P)$, there is a sequence $y_n\in Y(P)$ converging to $y$; let $I_{y_n}$ be the legal arcs from $0$ to $y_n$.
Obviously, $y\in J(P)$, and it is possible to arrange that all $y_n\in J(P)$ as well.
Passing to a subsequence, we may assume that $I_{y_n}$ converge to a continuum $C_y\ni y$.

If $y$ is on the boundary of a legal bubble $A$ of $P$, then $A$ can be chosen so that $y\ne r(A)$.
Then $\eta_P(y)$ is on the boundary of the bubble $A_Q$ of $Q$ corresponding to $A$, and $\eta_P(y)\ne r(A_Q)$.
A contradiction with $\eta_P(y)=x$.
Thus we assume that $y$ is not on the boundary of a bubble.

Let $A_n$ be the bubble attached to $\Delta(P)$ and such that $I_{y_n}\cap A_n\ne\0$.
If there are only finitely many different bubbles $A_n$, then, passing to a subsequence,
 we may assume that all $A_n$ are the same bubble $A$; set $A_Q$ to be the corresponding bubble of $Q$.
The intersection $I_{y_n}\cap A$ is the union of two internal rays landing at $r(A)\in\Delta(P)$ and $b_n\ne r(A)$.
If $b_n\to r(A)$, then we can replace $C_y$ with $C^*_y=C_y\sm A$.
The latter is a continuum containing $y$, and $C^*_y\sm\{x\}$ is disjoint from all bubbles.
If $b_n\not\to r(A)$, then, passing to a subsequence, assume that $b_n\to b\ne r(A)$.
It follows that $\eta_P(C_y\sm \ol A)$ is attached to $A_Q$ at $\eta_P(b)$; it does not accumulate on $\d\Delta(Q)$.
A contradiction with $x_Q=\eta_P(y)\in\d\Delta(Q)$.
Finally, if there are infinitely many pairwise different $A_n$s, then we set $C^*_y=C_y$.
In any case, $C^*_y$ is a continuum containing $y$ such that $C^*_y\sm\{x\}$ is disjoint from all bubbles.
It follows that $\eta_P(C^*_y)=\{x_Q\}$.
Since $\eta_P^{-1}(x_Q)$ is the union of $\{x\}$ and $C^*_y$ over all $y\in\eta_P^{-1}(x_Q)\sm\{x\}$,
 the fiber $\eta_P^{-1}(x_Q)$ is connected.

Now assume that $x_Q\notin\d\Delta(Q)$ or that $c\notin\d\Delta(P)$ or that $c\in\d\Delta(P)$ is never mapped to $1$.
Let $Z_Q$ be the intersection of all $P$-adapted wedges containing $x_Q$.
By the separation property, Proposition \ref{p:sep-w}, we have $Z_Q\cap J(Q)=\{x_Q\}$.
Apart from $x_Q$, the set $Z_Q$ may include certain external rays of $Q$ as well as certain internal rays in bubbles of $Q$.
Thus $Z_Q\cap K(Q)$ is the union of $\{x_Q\}$ and one or two internal rays in bubbles $A_Q$ such that $x\in\d A_Q$.
Moreover, every such bubble may intersect $Z_Q$ by at most one internal ray.

Consider the full preimage $Z=\eta_P^{-1}(Z_Q\cap K(Q))$.
This is the union of $\eta_P^{-1}(x_Q)$ and one or two internal rays in legal bubbles $A$ of $P$.
Every such bubble may intersect $Z$ by at most one internal ray.
It follows that connectedness of $Z$ will imply connectedness of the fiber $\eta_P^{-1}(x_Q)$.
The rest of the proof deals with connectedness of $Z$.

Take any $x\in\eta_P^{-1}(x_Q)\cap J(P)$.
Note that $Z$ is the intersection of all Siegel wedges of $P$ containing $x$ with $X(P)$.
Moreover, it is enough to intersect countably many Siegel wedges $W_1$, $\dots$, $W_n$, $\dots$:
$$
Z=X(P)\cap\bigcap_{n=1}^\infty W_n=X(P)\cap\bigcap_{n=1}^\infty U_n,\quad U_n=W_1\cap\dots\cap W_n.
$$
The sets $X(P)\cap U_n$ are connected by Lemma \ref{l:intW} and form a nested sequence.
Therefore, their intersection is also connected.
\end{proof}

\section{The parameter maps $\Phi^c_\la$ and $\Phi_\la$}
\label{s:par}
Consider a map $P=P_c\in\Cc^c_\la$.
Suppose that $c\in X(P)$.
Define $\Phi^{c}_\la(P)$ as $\eta_{P}(c)$.
Let $\Dc^c_\la$ denote the domain of the map $\Phi^c_\la$, that is, the set of all $P\in\Cc^c_\la$ such that $c\in X(P)$.
Observe that $\Zc^c_\la\subset\Dc^c_\la$.

\subsection{Immediate renormalization}
Recall the notions of a polynomial-like map and an immediate renormalization.
Write $U\Subset V$ if $\ol{U}\subset V$.
Let $U\Subset V$ be Jordan disks in $\C$.
The following classical definitions are due to Douady and Hubbard \cite{DH-pl}.

\begin{dfn}[Polynomial-like maps \cite{DH-pl}]\label{d:pl}
Let $f:U\to V$ be a proper holomorphic map.
Then $f$ is said to be \emph{polynomial-like} (PL).
By definition, a \emph{quadratic-like} (QL) map is a PL map of degree two.
The \emph{filled Julia set} $K(f)$ of $f$ is defined as the set of points in $U$, whose forward $f$-orbits stay in $U$.
\end{dfn}

Similarly to polynomials, \emph{the set $K(f)$ is connected if and only if all critical points of $f$ are in $K(f)$}.
The following is a greatly simplified and weakened version of a much stronger classical theorem of Douady and Hubbard \cite{DH-pl}.

\begin{thm}[PL Straightening Theorem \cite{DH-pl}] \label{t:DH-pl}
A PL map $f:U\to V$ is topologically conjugate to a polynomial of the same degree restricted on a
Jordan neighborhood of its filled Julia set.
\end{thm}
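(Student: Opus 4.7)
The plan is to follow the classical Douady--Hubbard quasiconformal surgery argument, which gives a stronger quasiconformal conjugacy; the topological statement is a corollary. Throughout, write $d=\deg f$.

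First I would adjust the Jordan disks so that $\partial U$ and $\partial V$ are smooth and $f$ extends smoothly across $\partial U$. Next I would produce a quasi-regular extension $F:\C\to\C$ of $f$ of degree $d$ as follows. Fix a conformal isomorphism $\chi:\C\sm\ol V\to \C\sm\ol\disk$ and let $G(z)=z^d$ on $\C\sm\disk$; on $\C\sm\ol V$ set $F=\chi^{-1}\circ G\circ\chi$. On the annulus $V\sm\ol U$, pick a $K$-quasiconformal degree-$d$ branched covering of the annulus $\chi^{-1}(\{1<|w|<R\})$ (for $R$ chosen so moduli match) whose boundary values agree with $f$ on $\partial U$ and with $\chi^{-1}\circ G\circ\chi$ on $\partial V$. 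Such an interpolation exists because both boundary maps are smooth covers of the same degree $d$ between smooth circles, and a smooth interpolation can be made qc with some finite dilatation $K$. Set $F=f$ on $U$. Then $F$ is holomorphic off the annulus $V\sm\ol U$ and $K$-quasi-regular globally.

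Next I would define a Beltrami coefficient $\mu$ on $\C$ that is $F$-invariant and has $\|\mu\|_\infty<1$. Set $\mu=0$ on $\C\sm V$ and on the filled Julia set $K(f)\subset U$. For any point $z$ that eventually lands in the annulus $V\sm\ol U$, say $F^n(z)\in V\sm\ol U$ for the first such $n$, pull back the standard structure by $F^n$ to define $\mu(z)$. The key point is that each forward orbit under $F$ passes through the dissipative annulus $V\sm\ol U$ \emph{at most once} before escaping to $\C\sm V$, where $F$ is again holomorphic; therefore each value of $\mu$ is obtained by a single pullback through a $K$-quasi-regular map composed with holomorphic maps, so $\|\mu\|_\infty\le(K-1)/(K+1)<1$. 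By construction $F^*\mu=\mu$ a.e.

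Now apply the Measurable Riemann Mapping Theorem to produce a quasiconformal homeomorphism $\varphi:\C\to\C$, normalized to fix $\infty$ and behave like an affine map near $\infty$, with $\varphi^*\mu_0=\mu$. Set $g=\varphi\circ F\circ\varphi^{-1}$. Since $F$ preserves $\mu$ and $\varphi$ sends $\mu$ to the standard structure $\mu_0$, the map $g$ preserves $\mu_0$ a.e., so by Weyl's lemma $g$ is holomorphic. Outside a large disk $g$ coincides with $w\mapsto cw^d$ for some constant $c$, so $g:\C\to\C$ is a proper holomorphic self-map of $\C$ of degree $d$, hence a polynomial of degree $d$. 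Restricting $\varphi$ to $U$ gives a homeomorphism conjugating $f:U\to V$ with $g$ on the Jordan neighborhood $\varphi(U)$ of $K(g)=\varphi(K(f))$, as desired.

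The main technical obstacle is constructing the qc interpolation on the annulus $V\sm\ol U$ with matching boundary data of degree $d$; this is where smoothness of $\partial U$, $\partial V$ and careful bookkeeping of moduli enter, and it is the only place where the assumption $U\Subset V$ (so that the annulus has positive modulus) is really used. Everything else is bookkeeping around MRMT.
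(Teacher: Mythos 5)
The paper does not actually prove this statement; it is quoted (in a deliberately weakened form) as the classical Straightening Theorem of Douady and Hubbard, with a citation. Your proposal is a sketch of the classical quasiconformal surgery proof, which is the right strategy, but there is a concrete error in the construction of the global quasi-regular extension~$F$.

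You set $F=\chi^{-1}\circ G\circ\chi$ on $\C\sm\ol V$, where $\chi:\C\sm\ol V\to\C\sm\ol\disk$ is conformal and $G(w)=w^d$. Since $G$ maps the unit circle to itself, this $F$ maps $\partial V$ onto $\partial V$. On the other boundary circle $\partial U$ of the interpolation annulus $V\sm\ol U$, your $F=f$ also maps onto $\partial V$. Thus both boundary circles of $V\sm\ol U$ are sent to $\partial V$, and there is no proper degree-$d$ quasi-regular covering of $\chi^{-1}(\{1<|w|<R\})$ with these boundary values: the two boundary circles of the source annulus would have to be distributed over the two distinct boundary circles of the target, not piled onto one of them. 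The standard Douady--Hubbard construction avoids this by choosing a conformal isomorphism $\phi:\C\sm\ol V\to\C\sm\ol\disk(R^d)$ with $R>1$, extending it to a quasiconformal homeomorphism $\tilde\phi:\C\sm U\to\C\sm\disk(R)$ whose boundary values on $\partial U$ are compatible with $f$ and the $d$-th power map (i.e., $\tilde\phi(z)^d=\phi(f(z))$ on $\partial U$), and then setting $F(z)=\phi^{-1}\bigl(\tilde\phi(z)^d\bigr)$ for $z\notin U$. With this choice $F(\partial V)=\phi^{-1}(\{|w|=R^{d^2}\})$ lies strictly outside $\ol V$, so the annulus $V\sm\ol U$ maps properly and with degree~$d$ onto $\phi^{-1}(\{R^d<|w|<R^{d^2}\})$, and $F:\C\to\C$ is globally proper of degree~$d$. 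Once this is fixed, the rest of your argument --- the $F$-invariant Beltrami coefficient $\mu$, the bound $\|\mu\|_\infty<1$ coming from the observation that orbits cross $V\sm\ol U$ at most once, the Measurable Riemann Mapping Theorem, Weyl's lemma, and the conclusion that $g=\varphi\circ F\circ\varphi^{-1}$ is a degree-$d$ polynomial conjugate to $f$ on $\varphi(U)$ --- is correct. A small wording issue: to define $\mu(z)$ when $F^n(z)$ is the first iterate in $V\sm\ol U$, you should pull back the standard structure by $F^{n+1}$ (equivalently, pull back by the holomorphic $f^n$ the qr structure already assigned on $V\sm\ol U$); as written, pulling back by $F^n$ through the purely holomorphic stretch $f^n$ would give $\mu\equiv 0$.
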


Theorem \ref{t:thmb} below appears to be a folklore result.
It is formally proved, e.g., in \cite{BOPT16} (Theorem B).

\begin{thm}
\label{t:thmb}
  Let $P:\C\to\C$ be a polynomial, and $Y\subset\C$ be a full
  $P$-invariant continuum. The following assertions are equivalent:
\begin{enumerate}
 \item the set $Y$ is the filled Julia set of some polynomial-like
     map $P:U^*\to V^*$ of degree $k$,
 \item $Y$ is a component of the set $P^{-1}(P(Y))$, and, for every
     attracting or parabolic point $y$ of $P$ in $Y$, the
     immediate attracting basin of $y$ or the union of all parabolic domains
     at $y$ is a subset of $Y$.
\end{enumerate}
\end{thm}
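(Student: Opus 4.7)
The plan is to establish the two implications separately, with the main content in $(2)\Rightarrow(1)$, where a polynomial-like restriction must be constructed. Throughout, forward invariance gives $P(Y)\subset Y$; for a PL filled Julia set one in fact has $P(Y)=Y$ since proper maps are surjective onto their codomain, and this equality is used implicitly below.

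For $(1)\Rightarrow(2)$, suppose $Y=K(f)$ where $f=P|_{U^*}\colon U^*\to V^*$ is PL of degree $k$. Any $x\in U^*\cap P^{-1}(Y)$ has $P(x)\in Y\subset U^*$, whose forward orbit remains in $Y$; hence the forward orbit of $x$ remains in $U^*$, giving $x\in K(f)=Y$, so $U^*\cap P^{-1}(Y)=Y$. Since $f$ is proper, $\partial U^*$ maps into $\partial V^*$, which is disjoint from $Y$, so no component of $P^{-1}(Y)$ reaches out of $U^*$ through $\partial U^*$; combined with $P(Y)=Y$, this shows $Y$ is a component of $P^{-1}(P(Y))$. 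For an attracting $y\in Y$, the immediate $P$-basin $B_P(y)$ is a Fatou component of $P$ containing $y$; every orbit in $B_P(y)$ eventually enters and stays in $U^*$, from which point it lies in $K(f)=Y$, and connectedness of $B_P(y)$ combined with the disjointness of $\partial U^*$ from $Y$ forces $B_P(y)\subset Y$. Parabolic domains are treated the same way.

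For $(2)\Rightarrow(1)$, the goal is to build Jordan disks $U^*\Subset V^*$ with $P\colon U^*\to V^*$ proper. Choose a nested sequence of Jordan neighborhoods $V_n$ of $Y$ with $\bigcap V_n=Y$, and let $U_n$ be the component of $P^{-1}(V_n)$ containing $Y$. By Lemma \ref{l:Hcont}, $U_n$ converges in the Hausdorff metric to the component $Y'$ of $P^{-1}(Y)$ containing $Y$. Since $P(Y')\subset Y$, any point of $Y'\setminus Y$ forced to lie in $P^{-1}(P(Y))$ would contradict the hypothesis that $Y$ is a component of $P^{-1}(P(Y))$; a routine reduction using fullness of $Y$ then forces $Y'=Y$. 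Hence for all sufficiently large $n$, $U_n\Subset V_n$, and one sets $V^*=V_n$, $U^*=U_n$. Then $P\colon U^*\to V^*$ is proper holomorphic, hence PL, of some degree $k$ computed via the argument principle.

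The main obstacle is to verify $K(P|_{U^*})=Y$. The inclusion $Y\subset K(P|_{U^*})$ is immediate from $P(Y)\subset Y\subset U^*$. For the reverse, any $x\in U^*\setminus Y$ whose forward $f$-orbit remains in $U^*$ lies either in the Julia set of $f$ or in an $f$-Fatou component. By the classification of Fatou components for PL maps (no wandering domains, no Herman rings), such a Fatou component is eventually periodic, and its periodic cycle corresponds to an attracting or parabolic cycle of $f$ or to a Siegel disk. Any attracting or parabolic periodic point of $f$ in $K(f)$ is also such for $P$ with the same multiplier, so the basin hypothesis in $(2)$ forces its immediate basin (or union of parabolic domains) inside $Y$, contradicting $x\notin Y$. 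Siegel (and Cremer) cycles are handled by the same mechanism via a separation argument exploiting the structure of rotational fixed points of $P$ together with the $f$-invariance of the rotation domain. The hardest step is precisely this transfer between $f$-dynamics on $U^*$ and global $P$-dynamics, which is exactly what the basin hypothesis in $(2)$ is designed to enable.
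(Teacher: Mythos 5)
The paper does not prove Theorem \ref{t:thmb}: it is explicitly cited as a folklore result formally established in \cite{BOPT16} (Theorem~B), so there is no in-paper argument against which to compare yours. Evaluating the proposal on its own, the $(1)\Rightarrow(2)$ direction is essentially sound: the open-and-closed argument for the basin inclusion and the observation that $P(\partial U^*)\subset\partial V^*$ misses $Y$ are the right moves, and $P(Y)=Y$ does follow from surjectivity of the proper map $f$.

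The substantive direction $(2)\Rightarrow(1)$ has genuine gaps. You pass from ``$Y$ is a component of $P^{-1}(P(Y))$'' to ``$Y$ equals the component of $P^{-1}(Y)$ containing it'' via ``a routine reduction using fullness of $Y$''; but $P^{-1}(P(Y))$ and $P^{-1}(Y)$ differ unless $P(Y)=Y$, and in this direction $P(Y)=Y$ is not given --- it must be deduced, not assumed. More seriously, the verification that $K(P|_{U^*})\subset Y$ is incomplete. You dispose only of those $x\in K(f)\setminus Y$ lying in a Fatou component of $f$ attracted to an attracting or parabolic cycle, where the basin clause of (2) applies. You say nothing about $x\in J(f)\setminus Y$ --- a Julia point of $f$ whose forward orbit stays in $U^*$ yet never enters $Y$ --- and nothing in your sketch rules this out. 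The Siegel and Cremer cases are not argued at all: hypothesis (2) makes no assertion about rotation domains, so the basin mechanism does not transfer, and ``a separation argument exploiting the structure of rotational fixed points'' names a technique without producing a proof. A smaller inaccuracy: Lemma \ref{l:Hcont} controls the full preimage $P^{-1}(\overline{V_n})$, not the single component $U_n$ you select, so the claimed Hausdorff convergence $U_n\to Y'$ needs its own justification (and Corollary \ref{c:comp-cont} does not apply as written, since $U_n$ typically contains critical points of $P$).
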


A cubic polynomial $P\in \Cc^c_\la$ is \emph{immediately renormalizable} if
$P:U\to V$ is a QL map for some $U$, $V$.

\begin{prop}
  \label{p:Dc}
  Suppose that $P\in\Cc^c_\la\sm\Dc^c_\la$.
Then $P$ is immediately renormalizable with $X(P)$ being the corresponding quadratic-like Julia set.
\end{prop}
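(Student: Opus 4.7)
The plan is to apply the polynomial-like Julia set characterization, Theorem \ref{t:thmb}, to the continuum $Y = X(P)$. Since $P \in \Cc^c_\la \sm \Dc^c_\la$, we have $c \notin X(P)$, while the critical point $1 \in \d\Delta(P) \subset X(P)$. Thus $X(P)$ is a full $P$-invariant continuum containing exactly one critical point of $P$, namely $1$. Condition (2) of Theorem \ref{t:thmb} is immediate: parabolic basins of any parabolic cycle in $X(P)$ are included by the very definition of $X(P)$, and no attracting cycle of $P$ can meet $X(P)$ (such a cycle would have to attract a critical point, but $1$ orbits on $\d\Delta(P)$ and $c \notin X(P)$). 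Thus only condition (1) --- that $X(P)$ is a connected component of $P^{-1}(P(X(P)))$ --- remains to be proved.

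For condition (1), the plan is to produce a Jordan disk $V \supset X(P)$ with $c \notin \ol V$ such that the component $U$ of $P^{-1}(V)$ containing $X(P)$ satisfies $\ol U \subset V$. I build $\d V$ as a Jordan curve consisting of finitely many arcs of periodic external rays landing at repelling periodic points on $\d X(P)$, joined by short arcs of an equipotential $\{G_P = \eps\}$ for small $\eps>0$. The existence of enough periodic repelling landing points is furnished by Theorem \ref{t:land}, applied to periodic bubble rays that approximate the portion of $\d X(P)$ bordering $\C\sm X(P)$, and their stability by Lemma \ref{l:rep} together with Theorems \ref{t:stab} and \ref{t:S-stab}. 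The combinatorial choice is made so that $\d V$ encloses $X(P)$ and leaves $c$ in its exterior. Pulling back $V$ by $P$: the component $U$ of $P^{-1}(V)$ containing $X(P)$ is bounded by arcs of preimages of these external rays (still external rays) and by preimages of the equipotential arc (still close to $X(P)$); the combinatorics of Siegel rays forces $U$ to lie entirely in $V$, giving $U \Subset V$, and forces the component $U$ to be disjoint from $c$.

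With condition (1) established, Theorem \ref{t:thmb} gives a polynomial-like map $P\colon U^* \to V^*$ with filled Julia set $X(P)$. The degree is computed via Riemann--Hurwitz for a branched cover between topological disks: the unique critical point of $P$ in $U^*$ is $1$, of local degree $2$, so $\deg(P\colon U^* \to V^*) = 1 + 1 = 2$. Hence $P$ is immediately renormalizable and $X(P)$ is the corresponding quadratic-like Julia set.

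The main obstacle in this plan is the rigorous construction of $\d V$ as a single Jordan curve that simultaneously encloses $X(P)$, excludes $c$, and has pullback components with the correct combinatorics. This reduces to a combinatorial separation statement --- that $c$ and $X(P)$ can be separated by finitely many periodic Siegel rays glued to an equipotential arc --- whose proof uses the bubble-ray landing theory of Sections \ref{s:bub} and \ref{s:stab} together with the separation properties of $P$-adapted wedges (Proposition \ref{p:sep-w} and Lemma \ref{l:intW}) that were developed for points of $X(P)$ but which admit an analogous statement for separating $c$ from $X(P)$ when $c \notin X(P)$.
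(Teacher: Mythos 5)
Your high-level plan --- apply Theorem \ref{t:thmb} to $Y=X(P)$, note that $1$ is the only critical point in $X(P)$, compute the degree via Riemann--Hurwitz --- matches the paper's. But the execution diverges at the crucial step, and your proposed route has both a circularity and a self-acknowledged gap.

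First, a bookkeeping slip: what you label ``condition (1)'' is the component clause of condition (2) in Theorem \ref{t:thmb}; condition (1) of that theorem is the desired conclusion. More substantively, your treatment of the parabolic part of condition (2) misses the key point. You write that ``parabolic basins of any parabolic cycle in $X(P)$ are included by the very definition of $X(P)$.'' That conditional is true, but the paper's observation is stronger and is needed later: there cannot be a parabolic cycle in $\ol{Y(P)}$ at all, because by the definition of $X(P)$ the immediate parabolic basins would then be adjoined to $X(P)$, each such basin cycle contains a critical point, it cannot be $1$ (which lies on $\d\Delta(P)$ and has an irrational-rotation orbit there), so it would be $c$, contradicting $c\notin X(P)$. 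Without noting this, your Riemann--Hurwitz step (``the unique critical point of $P$ in $U^*$ is $1$'') is not justified: if the parabolic case did occur, $c$ would lie in $X(P)\subset U^*$ and the degree would be $3$, not $2$.

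Second, and more importantly: your plan for the component clause is circular as stated. If you succeed in producing a Jordan disk $V\supset X(P)$ with $c\notin\ol V$ and $\ol U\subset V$ (where $U$ is the component of $P^{-1}(V)$ containing $X(P)$), you have built a quadratic-like map $P\colon U\to V$ directly --- that is already the conclusion of the proposition, modulo identifying the filled Julia set. But building $U$ and $V$ does not by itself show that the filled Julia set of $P\colon U\to V$ equals $X(P)$ (it is only a priori a superset, since $X(P)$ is forward invariant in $U$). Establishing that equality is precisely what Theorem \ref{t:thmb} is for, via the component condition; so you cannot use the construction of $U$ and $V$ to prove the component condition. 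The paper instead proves the component condition directly --- it asserts this is an easy topological statement about $Y(P)$ (essentially: preimage components of $Y(P)$ other than $Y(P)$ itself are separated from it, since $Y(P)$ is by construction exactly what is reachable from $0$ by a legal arc, and an extra preimage component cannot share a boundary point with $Y(P)$ without being swallowed into it). Your heavy construction of $\d V$ from periodic external rays and equipotentials, whose existence you admit is the ``main obstacle,'' is therefore both unnecessary and insufficient as a substitute for the direct argument about $Y(P)$.

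In short: you should (a) explicitly record that no parabolic cycle can lie in $\ol{Y(P)}$ (so $X(P)=\ol{Y(P)}$ here), (b) prove the component condition directly from the definition of $Y(P)$ rather than via a polynomial-like domain, and (c) only then invoke Theorems \ref{t:thmb} and \ref{t:DH-pl}.
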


\begin{proof}
Since $P\in\Cc^c_\la\sm\Dc^c_\la$, then $c\notin X(P)$.
The set $X(P)$ is compact; also, it is easy to see that $X(P)$ is
a component of $P^{-1}(X(P))$ (it suffices to consider the set $Y(P)$).
There are no parabolic periodic points of $P$ in $X(P)$;
 otherwise $c$ would be in one of the parabolic domains added to $X(P)$.
By Theorems \ref{t:DH-pl} and \ref{t:thmb}, there is a Jordan domain $U\supset X(P)$ such that
$P:U\to P(U)$ is a quadratic-like map whose filled Julia set coincides with $X(P)$.
\end{proof}

Recall that the set $\Pc^c_\la$ is the subset of $\Cc^c_\la$ consisting of polynomials that can be approximated by
sequences $P_{n}\in\Cc^c_{\la_n}$ with $|\la_n|<1$ and both critical points of $P_n$ in the immediate basin of $0$.

\begin{cor}
  \label{c:PcDc}
  The set $\Pc^c_\la$ is a subset of $\Dc^c_\la$.
\end{cor}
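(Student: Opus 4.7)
The plan is to argue by contradiction, combining Proposition~\ref{p:Dc} with the structural characterization of quadratic-like filled Julia sets in Theorem~\ref{t:thmb}. Suppose $P\in\Pc^c_\la\setminus\Dc^c_\la$. Then Proposition~\ref{p:Dc} yields Jordan domains $U^*\Subset V^*$ with $P\colon U^*\to V^*$ quadratic-like of degree~$2$ and filled Julia set $X(P)$, with $c\notin X(P)$. After a mild shrinking of $U^*$ one may also assume $c\notin \ol{U^*}$, so that the only critical point of $P$ inside $U^*$ is $1\in\d\Delta(P)\subset X(P)$.

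Next I would invoke the definition of $\Pc^c_\la$ to pick $P_n\to P$ with $P_n\in\Cc^c_{\la_n}$, $|\la_n|<1$, and both critical points $1$ and $c_n$ of $P_n$ in the immediate basin $B_0(P_n)$ of the attracting fixed point $0$. Since $P\colon U^*\to V^*$ is proper of degree~$2$ between Jordan disks with $U^*\Subset V^*$, a standard stability argument shows that for $n$ large the same domain $U^*$ works: $P_n\colon U^*\to V^*_n$, with $V^*_n:=P_n(U^*)$, is quadratic-like of degree~$2$ with $U^*\Subset V^*_n$. Moreover, the $P_n$-orbit of the critical point $1$ shadows its $P$-orbit for many iterates and then is captured by an arbitrarily small attracting neighbourhood of $0$ sitting inside $U^*$, so $1\in K^*(P_n)$; hence the filled Julia set $K^*(P_n)\subset U^*$ of the quadratic-like restriction is a connected $P_n$-invariant continuum.

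Now apply Theorem~\ref{t:thmb}, direction (1)$\Rightarrow$(2), to the polynomial $P_n$ and the continuum $Y=K^*(P_n)$. Since $0\in Y$ is an attracting fixed point of $P_n$, the theorem forces the full immediate attracting basin $B_0(P_n)$ of the \emph{polynomial} $P_n$ to lie in $K^*(P_n)\subset U^*$. But by construction of the approximating sequence both critical points $1$ and $c_n$ of $P_n$ lie in $B_0(P_n)$, so both lie in $U^*$. This places two distinct simple critical points of $P_n$ in $U^*$, contradicting the Riemann--Hurwitz count for the degree-$2$ proper map $P_n\colon U^*\to V^*_n$. The contradiction yields $\Pc^c_\la\subset\Dc^c_\la$.

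The main obstacle I expect is the persistence step: one must verify that for $P_n$ sufficiently close to $P$ the image $V^*_n=P_n(U^*)$ is a Jordan disk with $U^*\Subset V^*_n$ and that the QL critical point $1$ of $P_n$ really stays in $K^*(P_n)$, so that Theorem~\ref{t:thmb} applies to a genuine continuum rather than a possibly Cantor-like $K^*(P_n)$. Both points are classical in polynomial-like theory, but pinning them down is what allows the rigid Riemann--Hurwitz obstruction to be invoked on the fixed domain $U^*$ inherited from $P$ rather than on a parameter-dependent family of QL domains that could a priori pull away from $c_n$.
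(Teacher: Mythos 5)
Your proof is essentially the paper's: perturb to a nearby $P_n$ with $|\la_n|<1$ and both critical points in the immediate basin of $0$, persist the quadratic-like restriction of $P$, and derive a contradiction from the fact that its filled Julia set must contain the immediate basin and hence two critical points of a degree-two proper map. The only notable difference is that the paper fixes the range $V$ (chosen with $c\notin V$) and takes $U_f$ to be the component of $f^{-1}(V)$ containing $0$, so that properness and the compact containment follow from Lemma \ref{l:Hcont}; pushing forward a fixed $U^*$ as you do requires the extra check that $P_n(\partial U^*)$ stays disjoint from $P_n(U^*)$, which is exactly the persistence issue you flag.
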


\begin{proof}
  We will prove an equivalent statement: if $P\in\Cc^c_\la\sm\Dc^c_\la$, then $P\notin\Pc^c_\la$.
By Proposition \ref{p:Dc}, there is a quadratic-like map $P:U\to V$ with filled Julia set $X(P)$,
 and we may choose $U$ and $V$ so that $c\notin V$.
There is $\eps>0$ with the following property: if a cubic polynomial $f$ is $\eps$-close to $P$,
 then, setting $U_f$ to be a component of $f^{-1}(V)$ containing $0$, we obtain
 a quadratic-like map $f:U_f\to V$.
This follows, e.g., from Lemma \ref{l:Hcont}.
On the other hand, since $|\la|=1$, we can choose $f$ in $\Cc_\mu$ with $|\mu|<1$.
The filled Julia set $K^*_f$ of the quadratic-like map $f:U_f\to V$ then contains the immediate attracting basin of $0$.
It follows from the Douady--Hubbard straightening theorem \cite{DH-pl} that $K^*_f$ is a Jordan disk
 on which $f$ is two-to-one.
In particular, it is impossible that $f$ is in the principal hyperbolic component, and $P\notin\Pc^c_\la$,
 as claimed.
\end{proof}

\subsection{Continuity}
It will be established in this section that $\Phi^c_\la$ is continuous.
Recall that $P_1$ is the polynomial in $\C^*_\la$ such that $c=1$ is a multiple critical point.
The next lemma deals with continuity of $\Phi^c_\la$ at $P_1$.

\begin{lem}
  \label{l:P1}
  Suppose that a sequence $P_{c_n}\in\Dc^c_\la$ converges to $P_1$ (so that $c_n\to 1$).
If $\eta_{P_{c_n}}(c_n)$ converges, then the limit is equal to $1$.
\end{lem}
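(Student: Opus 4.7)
Write $w = \lim_n \eta_{P_{c_n}}(c_n)$; the aim is to show $w = 1$. The easy case is when $c_n \in \partial \Delta(P_{c_n})$ for infinitely many $n$ (which occurs e.g.\ whenever $P_{c_n} \in \Zc^c_\la$): Corollary \ref{c:sup-conv} together with $c_n \to 1 = \ol\psi_{\Delta(P_1)}(1)$ forces $u_n = \ol\psi_{\Delta(P_{c_n})}^{-1}(c_n) \to 1$, whence $\eta_{P_{c_n}}(c_n) = \ol\psi_{\Delta(Q)}(u_n) \to \ol\psi_{\Delta(Q)}(1) = 1$. The substantive case is when $c_n \notin \ol{\Delta(P_{c_n})}$ for all large $n$, so $c_n$ lies in legal bubbles attached very close to $1 \in \partial \Delta(P_{c_n})$. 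Here I would pinch $1$ with a finite family of Siegel wedges.

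For a fixed $\eps > 0$, use the separation property (Proposition \ref{p:sep-w}) together with compactness of $K(Q) \cap \{|z - 1| \ge \eps\}$ to select finitely many $P_1$-adapted Siegel wedges $W_{Q,1}, \ldots, W_{Q,k}$ in the dynamical plane of $Q$, each containing $1$ in its open interior, with $K(Q) \cap \bigcap_j W_{Q,j} \subset \{|z - 1| < \eps\}$. Via the wedge correspondence, each $W_{Q,j}$ comes from a $P_1$-Siegel wedge $W_j$ whose bounding Siegel rays avoid $1$ by a positive distance. Applying Theorem \ref{t:S-stab} to each bounding Siegel ray (chosen so that its non-zero periodic point is repelling), for $n$ large one obtains Siegel rays for $P_{c_n}$ of the same external arguments bounding a wedge $W_j(n)$ spherically close to $W_j$; by construction $W_j(n)$ corresponds to the very same $W_{Q,j}$.

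Because $1$ lies in the open set $W_j$ and the bounding Siegel rays of $W_j(n)$ stay a uniform positive distance from $1$, the hypothesis $c_n \to 1$ forces $c_n \in W_j(n)$ for all $n$ large and every $j$. Continuity of $\eta_{P_{c_n}}$ on $X(P_{c_n})$ (Theorem \ref{t:etaPcont}) applied to approximations of $c_n$ by points in $W_j(n) \cap Y(P_{c_n})$, combined with the wedge correspondence, then gives $\eta_{P_{c_n}}(c_n) \in \ol{W_{Q,j}}$ for every $j$. Intersecting over $j$ and letting $n \to \infty$ yields $w \in K(Q) \cap \bigcap_j \ol{W_{Q,j}} \subset \{|z - 1| \le \eps\}$, and taking $\eps \to 0$ completes the argument.

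The main obstacle is the exception in Proposition \ref{p:sep-w}: because $c = 1 \in \partial \Delta(P_1)$ for the limit polynomial $P_1$, the proposition does not directly separate two points on $\partial \Delta(Q)$. I would deal with this by constructing the relevant wedges with branch points at the centers of legal bubbles attached to $\partial \Delta(P_1)$ close to $1$, as in the first paragraph of the proof of Lemma \ref{l:sep-adapt}; the density of such root points in $\partial \Delta(P_1)$ (which follows from irrationality of $\ta$) guarantees their existence. A secondary technical issue is arranging that the bounding Siegel rays land at repelling rather than parabolic points (as required by Theorem \ref{t:S-stab}), which is easy since $P_1$ has at most one parabolic cycle by the Fatou--Shishikura inequality, so only finitely many Siegel rays land parabolically and can be avoided in the selection.
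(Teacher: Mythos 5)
Your strategy founders on a structural fact about $P_1$ that the paper's own proof is specifically designed to work around: $P_1$ has \emph{no legal bubbles other than} $\Delta(P_1)$. Every bubble $A$ attached to $\d\Delta(P_1)$ has root point $r(A)$ with $P_1^{\mathrm{Gen}(A)-1}(r(A))=1=c$, so Definition \ref{d:leg} disqualifies it, and by Lemma \ref{l:XPbub} no point beyond such a bubble lies in $Y(P_1)$; hence $Y(P_1)=\ol{\Delta(P_1)}$. Consequently $P_1$ has no bubble rays, no Siegel rays, no Siegel wedges, and therefore no $P_1$-adapted wedges at all --- the finite family $W_{Q,1},\dots,W_{Q,k}$ of $P_1$-adapted wedges pinching $1$ with which your argument begins does not exist. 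Your proposed repair (wedges rooted at centers of legal bubbles attached to $\d\Delta(P_1)$ near $1$) fails for the same reason: the bubbles attached to $\d\Delta(P_1)$ are exactly the non-legal ones. Likewise Theorem \ref{t:S-stab} cannot be applied with base point $P_0=P_1$, since it presupposes a Siegel ray for $P_0$.

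Even if you relax legality and build separating curves through non-legal bubbles of $P_1$ --- which is what the paper actually does --- the stability you invoke is precisely what breaks down. Such a curve traverses an infinite chain of bubbles whose attaching points are iterated preimages of the critical point $1$, and when $P_1$ is perturbed to $P_{c_n}$ adjacent bubbles in the chain can detach, disconnecting the curve; so there is no single separating curve that persists for all nearby parameters. The paper resolves this by choosing the separating curve $\Gamma$ (hence the chain of bubbles it passes through) \emph{depending on} $c_n$, so that for that particular parameter the chosen chain stays attached, and then uses the closeness of the multi-angles of $y_n$ and $y$ to derive the contradiction. This adaptive choice is the real content of the lemma and is absent from your argument. (Your ``easy case'' $c_n\in\d\Delta(P_{c_n})$, handled via Corollary \ref{c:sup-conv}, is correct, but it is the only part of the proposal that survives.)
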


\begin{proof}
  Assume the contrary: $c_{Q,n}=\eta_{P_{c_n}}(c_n)$ converges to a point $c_Q$ different from $1$.
Let $y_n\in Y(P_{c_n})$ be a sequence of points such that $y_n$ is very close to $c_n$, in particular, $y_n\to 1$.

Suppose first that $c_Q\in \d\Delta(Q)$; let $(\alpha_0)$ be the multi-angle of $c_Q$.
Let $y$ be the point where the internal ray in $\Delta(P_1)$ of argument $\alpha_0$ lands.
There are two internal rays $R_Q$, $L_Q$ in $\Delta(Q)$ such that $\ol{R_Q\cup L_Q}$
 separates $1$ from $c_Q$ in $\ol\Delta(Q)$.
There is a simple unbounded curve $\Gamma_Q$ (a closed subset of $\C$ homeomorphic to $\R$)
 separating $1$ from $c_Q$ in $\C$ and such that $R_Q$, $L_Q\subset\Gamma_Q$.
Moreover, we can assume that $\Gamma_Q$ consists of internal rays in various bubbles,
 centers of those bubbles, landing points of those rays, a couple of repelling periodic points of $Q$,
 and a couple of external rays of $Q$ landing at these repelling points.
In other words, $\Gamma_Q$ is the union of two legal arcs from $0$ to repelling periodic points of $Q$
 and the external rays of $Q$ landing at these repelling points.
Thus $\Gamma_Q$ is similar to the boundary of an adapted wedge except that it is not adapted for $P_1$.

To $\Gamma_Q$, we want to assign a curve $\Gamma$ in the dynamical plane of $P_1$.
If is natural to require that $\Gamma$ consist of internal rays in various bubbles of $P_1$,
 centers of those bubbles, landing points of those rays, a couple of repelling periodic points of $P_1$, and
 a couple of external rays of $P_1$ landing at these repelling points.
Also, we require that there is a bijective correspondence between bubbles $A$ intersecting $\Gamma$
 and bubbles $A_Q$ intersecting $\Gamma_Q$ so that $A\cap\Gamma$ includes internal rays of the same arguments
 as $A_Q\cap\Gamma_Q$, adjacent bubbles correspond to adjacent bubbles, and $\Delta(P_1)$ corresponds to $\Delta(Q)$.
There is indeed such a curve $\Gamma$.
The existence of $\Gamma$ relies on the landing theorem, Theorem \ref{t:land}, which is also valid in our case.
On the other hand, $\Gamma$ as above is not unique.

The problem is, no matter which $\Gamma$ we choose, it is not stable.
If $P_1$ is replaced with $P_c$, where $c$ is close to $1$, then there is a set $\Gamma_c$ close to $\Gamma$.
However, $\Gamma_c$ may become disconnected (two adjacent bubbles through which $\Gamma$ goes may detach).
On the other hand, for each particular $c$ close to $1$, we may choose $\Gamma$ so that $\Gamma_c$ stays connected.
This amounts to choosing, for every bubble $A$ through which $\Gamma$ passes, a next bubble $A'$
 attached to the point of $\Gamma\cap A$ different from $r(A)$ so that
 $A'$ does not detach from $A$ in the dynamical plane of $P_c$.
Thus we choose both $\Gamma$ and $\Gamma_c$ depending on $c$.
These curves are close to each other (in the spherical metric), and both separate $1$ from $y$.
By the choice of $y_n$, it has multi-angle $(\alpha_{0,n},\dots)$ with $\alpha_{0,n}$ close to $\alpha_0$, for large $n$.
It follows that $y$ and $y_n$ are on the same side of $\Gamma_{c_n}$, and $1$ is on the other side.
Moreover, $\Gamma_{c_n}$ cannot accumulate on $1$.
A contradiction with $y_n\to 1$.

Suppose now that $c_Q\notin\d\Delta(Q)$; we may assume that $c_Q\in J(Q)$.
Let $(\alpha_0,\alpha_1,\alpha_2,\dots)$ be the multi-angle of $c_Q$.
Recall that $\alpha_2\ne\alpha_1=\alpha_0$.
Denote the multi-angle of $y_n$ as $(\alpha_{0,n},\alpha_{1,n},\alpha_{2,n},\dots)$.
Choose a large $n$ so that the first three terms in the multi-angle of $c_{Q,n}$
 are close to $\alpha_0$, $\alpha_1$, $\alpha_2$.
For this $n$, by continuity of $\eta_{P_{c_n}}$, we can also arrange that $\alpha_{i,n}$ are close to $\alpha_i$
 at least for $i=0,1,2$.
In particular, $\alpha_{i,n}\to\alpha_i$ as $n\to\infty$.
If $\alpha_0\ne 0$, then the same separation argument as above is applicable.
Thus we assume $\alpha_0=0$; it follows that $\alpha_{0,n}=0$ for large $n$.

Let $A_{Q,1}$ be the bubble of $Q$ with multiangle $(0)$.
Suppose that $x_Q$ is the landing point of the internal ray in $A_{Q,1}$ of argument $\alpha_2$.
There is an unbounded simple curve $\Gamma_Q$ that includes the center $o_{A_{Q,1}}$ of $A_{Q,1}$ and a pair
 of internal rays in $A_{Q,1}$ and that separates $x_Q$ from $1$.
It then also separates $c_Q$ from $1$.
We may assume that $\Gamma_Q\cap K(Q)$ is the union of two legal arcs from $o_{A_{Q,1}}$
 to certain repelling periodic points of $Q$ and the external rays of $Q$ landing at these repelling points.
In other words, $\Gamma_Q$ is almost as above except that it is now centered at $o_{A_{Q,1}}$ rather than $0$.
The rest of the proof is the same as above.
\end{proof}

Theorem \ref{t:contoutZ} completes the proof of Theorem \ref{t:main1}.

\begin{thm}
  \label{t:contoutZ}
  The map $\Phi^c_\la:\Dc^c_\la\to K(Q)$ is continuous.
\end{thm}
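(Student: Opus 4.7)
The plan is to argue by contradiction, separating $\eta_{P_0}(c_0)$ from any hypothetical cluster value of $\eta_{P_n}(c_n)$ by a $P_0$-adapted Siegel wedge, and then transporting this separation to the nearby parameters using stability of Siegel rays. Fix $P_0 = P_{c_0}\in \Dc^c_\la$ and suppose continuity fails at $P_0$. Passing to a subsequence and using compactness of $K(Q)$, we obtain a sequence $P_n = P_{c_n}\to P_0$ in $\Dc^c_\la$ such that $x_n := \eta_{P_n}(c_n)\to x_\infty$ with $x_\infty \ne x_0 := \eta_{P_0}(c_0)$. If $c_0 = 1$, i.e.\ $P_0 = P_1$, then Lemma \ref{l:P1} forces $x_\infty = 1 = x_0$, contradiction; so we may assume $c_0 \ne 1$.

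Now apply Proposition \ref{p:sep-w} to the two distinct points $x_0,\,x_\infty \in \ol{\eta_{P_0}(Y(P_0))}\subset K(Q)$. Outside the single exceptional configuration listed there, it yields a $P_0$-adapted wedge $W_Q$ with $x_0\in W_Q$ and $x_\infty\notin \ol{W_Q}$. By the construction used in Lemma \ref{l:sep-adapt} (invoking the landing theorem, Theorem \ref{t:land}, to produce periodic Siegel rays terminating at repelling periodic points), $W_Q$ corresponds under the bubble correspondence to a Siegel wedge $W$ for $P_0$ that contains $c_0$ and is bounded by two Siegel rays $\Sigma,\,\Sigma'$ whose non-zero periodic points are repelling. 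Theorem \ref{t:S-stab} then provides, for all large $n$, Siegel rays $\Sigma\<P_n\>$ and $\Sigma'\<P_n\>$ for $P_n$ of the same arguments, spherically close to $\Sigma$ and $\Sigma'$, bounding a Siegel wedge $W\<P_n\>$. Since arguments, multi-angles, and the bounding external ray arguments at infinity are all preserved by the motion, the $P_n$-adapted wedge in $K(Q)$ corresponding to $W\<P_n\>$ is again $W_Q$. Because $c_n\to c_0\in W$ and $\d W\<P_n\>\to \d W$, we have $c_n\in W\<P_n\>$ for all large $n$, whence $x_n = \eta_{P_n}(c_n)\in W_Q$ -- contradicting $x_n\to x_\infty\notin \ol{W_Q}$.

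The main obstacle is the exceptional configuration of Proposition \ref{p:sep-w}: $x_0,\,x_\infty\in \d\Delta(Q)$ while $c_0\in \d\Delta(P_0)$ is eventually mapped to $1$. Here no $P_0$-adapted wedge separates $x_0$ from $x_\infty$, so a separate argument is needed. The natural approach is to exploit the equicontinuous motion of $\ol\Delta(P)$ (Theorem \ref{t:sul}) together with the identification $\eta_P|_{\ol\Delta(P)} = \ol\psi_{\Delta(Q)}\circ\ol\psi_{\Delta(P)}^{-1}$: continuity of $P\mapsto \ol\psi_{\Delta(P)}$ in the sup-norm (Corollary \ref{c:sup-conv}) together with the relation $P_0^k(c_0) = 1$ ensures $P_n^k(c_n)\to 1$, which pins down the polar angle of any cluster value of $x_n$ on $\d\Delta(Q)$ to equal that of $x_0$. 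The delicate point is to rule out that $c_n$ drifts away from $\d\Delta(P_n)$ in such a way that $\eta_{P_n}(c_n)$ slides along $\d\Delta(Q)$; this requires analyzing the local structure of $\Dc^c_\la$ near $\Zc^c_\la$ together with Theorem \ref{t:XPinbub} to control the transverse direction.
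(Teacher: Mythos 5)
Your core mechanism for the generic case---separating $x_0=\eta_{P_0}(c_0)$ from a putative other cluster value $x_\infty$ by a $P_0$-adapted wedge and then transporting the separation to nearby parameters via stability of Siegel rays (Theorem \ref{t:S-stab})---is exactly the argument the paper uses in its second case.  However, there are two genuine gaps.

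First, you apply Proposition \ref{p:sep-w} directly to $x_0$ and $x_\infty$, but that proposition is only stated for pairs of points in $J(Q)\cap\ol{\eta_P(Y(P))}$.  When $c_0\in X(P_0)\setminus J(P_0)$, the image $x_0$ lies in the \emph{interior} of a bubble of $Q$, and the separation proposition does not apply.  This case is not pathological (it is exactly the case from Theorem~\ref{t:XPinbub}(2) where $\eta_{P_0}$ fails to be monotone) and has to be handled separately.  The paper does so by a different argument: it uses stability of the bubble $P(A)$ (where $A$ is the critical bubble), the semiconjugacy $\eta_P\circ P=Q\circ\eta_P$, and injectivity of $Q$ on the target bubble to deduce $c_Q=c'_Q$ from $Q(c_Q)=Q(c'_Q)$.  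Your proof is silent on this case.

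Second, your treatment of the exceptional configuration of Proposition \ref{p:sep-w} ($x_0,x_\infty\in\d\Delta(Q)$ with $c_0\in\d\Delta(P_0)$ eventually hitting $1$) is only a sketch.  You correctly rule out $c_0=1$ using Lemma \ref{l:P1}, and you correctly observe that $P_n^k(c_n)\to 1$ pins down the angular position of the cluster value, but then you explicitly flag ``the delicate point'' of ruling out lateral drift and leave it unresolved by deferring to an unspecified analysis of the local structure of $\Dc^c_\la$ near $\Zc^c_\la$.  That is the heart of this case, and without it the argument is incomplete.  The paper handles it differently and much more cheaply: since $c_0\ne 1$, one has $P_0^k(c_0)=1$ with $k>0$, and one may conjugate $P_0=P_{c_0}$ to $\tilde P_0=P_{1/c_0}$ by the linear map sending $c_0$ to $1$.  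This conjugacy sends $1$ to the free critical point $\tilde c_0$ of $\tilde P_0$, and now $\tilde P_0^k(1)=\tilde c_0$ so $\tilde c_0$ is never mapped to $1$ (the rotation on $\d\Delta$ has no periodic orbits).  The exceptional hypothesis is thereby eliminated and the wedge-separation argument of the second case applies to $\tilde P_0$.  This linear-conjugation reduction is the idea missing from your proof, and it is where you should be looking rather than at the local geometry of the parameter slice.
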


\begin{proof}
Take $P\in\Dc^c_\la$.
Suppose that $P_{c_n}\in\Dc^c_\la$ converge to $P=P_c$.
We show that $c_{Q,n}=\Phi^c_\la(P_n)$ converge to $c_Q=\Phi^c_\la(P)$.
If not, then by choosing a suitable subsequence, we may assume that $c_{Q,n}\to c'_Q\ne c_Q$.
Now consider several cases.

First, suppose that $c_Q$ belongs to a bubble $A_Q$ of $Q$.
By definition, $A_Q$ corresponds to a legal bubble $A$ of $P$ containing $c$.
The sequence $c_n$ converges to $c$.
The bubble $P(A)$ is stable, in particular, there is a unique bubble $B_n$ of $P_{c_n}$ close to $P(A)$, for large $n$.
Moreover, $B_n$ contains the critical value $P_{c_n}(c_n)$.
By Lemma \ref{l:Hcont} it follows that a component $A_n$ of $P_{c_n}^{-1}(B_n)$
 contains the critical point $c_n$ and is close to $A$.
All $A_n$ have the same multi-angle, thus they all correspond to $A_Q$.
Now, since both $c_Q$ and $c'_Q$ lie in the same bubble $A_Q$, they have different images $Q(c_Q)\ne Q(c'_Q)$.
On the other hand, $\eta_{\tilde P}(\tilde P(\tilde c))$ depends continuously on $\tilde P=P_{\tilde c}$ near $P$
 (because of the stability of $P(A)$).
It follows that $Q(c_{Q,n})=\eta_{P_{c_n}}(P_{c_n}(c_n))\to \eta_P(P(c))=Q(c)$.
On the other hand, $Q(c_{Q,n})\to Q(c'_Q)$ since $c_{Q,n}\to c'_Q$.
Thus we must have $c_Q=c'_Q$ in the considered case.

Suppose now that $c_Q\in J(Q)$ and either $c\notin\d\Delta(P)$ or $c\in\d\Delta(P)$ is never mapped to $1$ under $P$.
By Proposition \ref{p:sep-w}, there is an adapted wedge $W_Q$ separating $c'_Q$ from $c_Q$.
Since $W_Q$ is open, $c_{Q,n}\in W_Q$ for all sufficiently large $n$.
In fact, $c_{Q,n}$ even lie in some compact subset $C_Q$ of $W_Q$ for all large $n$.
Let $W$ be the Siegel wedge of $P$ corresponding to $W_Q$.
Since the boundary of $W$ is stable, there are Siegel wedges $W_n$ for $P_{c_n}$ close to $W$
 that correspond to the same $W_Q$.
It follows from $c_{Q,n}\in C_Q$ that $c_n\in W_n$ for large $n$.
Then $c\in\ol W$, a contradiction.

Finally, suppose that $c\in\d\Delta(P)$ and $P^k(c)=1$.
If $k=0$, then the theorem follows from Lemma \ref{l:P1}.
If $k>0$, then $P$ is conjugate to another polynomial $\tilde P\in\Zc^c_\la$ via a linear map that takes $c$ to $1$.
This conjugacy takes $1$ to a critical point $\tilde c$ of $\tilde P$ such that $\tilde P^k(1)=\tilde c$.
Since $\tilde c$ is never mapped to $1$ under $\tilde P$, the argument given above is applicable to $\tilde P$.
\end{proof}

\subsection{The unmarked map $\Phi_\la$}
We now study the unmarked map $\Phi_\la:\Pc_\la\to\tilde K(Q)$.
Recall that $\tilde K(Q)$ was defined as a model space obtained as a quotient of $K(Q)\sm\Delta(Q)$.
Namely, points of $\d\Delta(Q)$ that are $\ol\psi_Q$-images of complex conjugate points in $\uc$
 are identified in $\tilde K(Q)$.
Let $\pi:K(Q)\sm\Delta(Q)\to\tilde K(Q)$ be the quotient map.
The map $\pi\circ\Phi^c_\la:\Pc^c_\la\to\tilde K(Q)$ is then well defined and continuous.
It suffices to prove that $P_c$ and $P_{1/c}$ have the same images under $\pi\circ\Phi^c_\la$.
Then the map $\pi\circ\Phi^c_\la$ descends to a continuous map $\Phi_\la$ from $\Pc_\la$ to $\tilde K(Q)$,
 as is claimed in the following lemma.

\begin{lem}
  \label{l:welldef}
  The points $\Phi^c_\la(P_c)$ and $\Phi^c_\la(P_{1/c})$ have the same $\pi$-images in $\tilde K(Q)$.
\end{lem}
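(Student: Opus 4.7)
The plan is to reduce the identification $\Phi^c_\la(P_c)\sim\Phi^c_\la(P_{1/c})$ in $\tilde K(Q)$ to an explicit polar-angle computation in the Siegel disks, carried out via the linear conjugacy between $P_c$ and $P_{1/c}$.

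First I would observe that for both $\Phi^c_\la(P_c)$ and $\Phi^c_\la(P_{1/c})$ to be defined, one must have $c\in\Zc^c_\la$: otherwise one of $c$ or $1/c$ lies in $\Oc^*_\la(0)$, and the corresponding polynomial is not in the parameter space $\Oc^*_\la(\infty)\cup\Zc^c_\la$. The cases $c=\pm 1$ are trivial (then $P_c=P_{1/c}$), so assume $c\in\Zc^c_\la\sm\{\pm 1\}$. Then $\{c,1\}\subset\d\Delta(P_c)$ and $\{1/c,1\}\subset\d\Delta(P_{1/c})$; in particular $c\in X(P_c)$, $1/c\in X(P_{1/c})$, and both images $\Phi^c_\la(P_c)$, $\Phi^c_\la(P_{1/c})$ land on $\d\Delta(Q)$.

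The heart of the argument is to compute the polar angles of $c$ and $1/c$ in their respective Siegel coordinates. The linear conjugacy $L\colon z\mapsto z/c$ satisfies $L\circ P_c=P_{1/c}\circ L$ and sends $\ol\Delta(P_c)$ conformally onto $\ol\Delta(P_{1/c})$. The composition $\ol\psi_{\Delta(P_{1/c})}^{-1}\circ L\circ \ol\psi_{\Delta(P_c)}$ is a conformal self-map of $\ol\disk$ that conjugates $\mathrm{Rot}_\ta$ with itself; being a M\"obius self-map of $\disk$ commuting with an irrational rotation, it must fix $0$ and hence be a rotation $\mathrm{Rot}_{\ta_0}$ for some $\ta_0\in\R/\Z$. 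Evaluating at $1\in\uc$ (recall the normalization $\ol\psi(1)=1$) gives $1/c=L(1)=\ol\psi_{\Delta(P_{1/c})}(e^{2\pi i\ta_0})$; the inverse identity yields $c=L^{-1}(1)=\ol\psi_{\Delta(P_c)}(e^{-2\pi i\ta_0})$. Thus $1/c$ has polar angle $\ta_0$ in $P_{1/c}$-coordinates and $c$ has polar angle $-\ta_0$ in $P_c$-coordinates.

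Finally, since $c\in\d\Delta(P_c)$ and $1/c\in\d\Delta(P_{1/c})$, their multi-angles are the singletons consisting of their polar angles. By the defining property of $\eta_P$ (equality of multi-angle and polar radius), this gives $\eta_{P_c}(c)=\ol\psi_Q(e^{-2\pi i\ta_0})$ and $\eta_{P_{1/c}}(1/c)=\ol\psi_Q(e^{2\pi i\ta_0})$. The $\ol\psi_Q$-preimages $e^{\pm 2\pi i\ta_0}$ are complex conjugate, hence have equal real parts, and consequently these two points of $\d\Delta(Q)$ are identified by the equivalence relation $\sim$. Therefore $\pi(\eta_{P_c}(c))=\pi(\eta_{P_{1/c}}(1/c))$ in $\tilde K(Q)$, as required. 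The only potentially subtle step is rigorously identifying the composition of Riemann maps with the conjugacy as a rigid rotation; this is immediate once one recalls that commutation with an irrational rotation in $\Aut(\disk)$ forces the automorphism to fix $0$.
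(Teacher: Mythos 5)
Your proof is correct and follows essentially the same approach as the paper's: the paper packages the computation in the notion of ``angular difference,'' noting that the affine conjugacy between $P_c$ and $P_{1/c}$ forces the angular differences between critical points on the Siegel boundary to be opposite in sign, which you make explicit by computing the conjugacy $L(z)=z/c$ and identifying $\ol\psi_{\Delta(P_{1/c})}^{-1}\circ L\circ\ol\psi_{\Delta(P_c)}$ as a rotation via the commutation-with-irrational-rotation argument. Your preliminary observation that $c\in\Zc^c_\la$ is implicit in the paper (it is forced by $\Cc^c_\la\subset\Oc^*_\la(\infty)\cup\Zc^c_\la$) and is a helpful clarification.
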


The proof of this lemma uses the notion of the
 \emph{angular difference} between two points $a$, $b\in\d\Delta(P)$.
This is the difference $\alpha-\beta\in\R/\Z$,
 where $a=\ol\psi_{\Delta(P)}(e^{2\pi i\alpha})$ and $b=\ol\psi_{\Delta(P)}(e^{2\pi i\beta})$.

\begin{proof}[Proof of Lemma \ref{l:welldef}]
  Note that $P_c$ and $P_{1/c}$ are affinely conjugate; the difference is only in how the critical points are marked.
Thus the angular difference between the two critical points in the boundary of the Siegel disk
 is the same up to a sign for $P_c$ and $P_{1/c}$.
It follows that $\Phi^c_\la(P_c)$ and $\Phi^c_\la(P_{1/c})$ have the same angular difference with $1$
 up to a sign in $\d\Delta(Q)$.
By definition, such points are identified in $\tilde K(Q)$.
\end{proof}

Lemma \ref{l:welldef} implies the Main Theorem.

\subsubsection*{Acknowledgements}
We are grateful to the referee for useful remarks.
The second named author was partially supported by NSF grant DMS-1807558.
The fourth named author has been supported by the Simons-IUM fellowship and
 by the HSE University Basic Research Program.

\end{document}